\documentclass{amsart}
\usepackage[margin=1.3in]{geometry}
\usepackage{color}
\usepackage{float}
\usepackage{amsmath}
\usepackage{amssymb}
\usepackage{amsthm}
\usepackage{mathrsfs}
\usepackage{enumerate}
\usepackage{appendix}
\usepackage{tikz}
\usepackage{bm,amsbsy}

\numberwithin{equation}{section}

\newtheorem{thm}{Theorem}[section]
\newtheorem*{thm*}{Theorem}
\newtheorem{prop}[thm]{Proposition}
\newtheorem{lemma}[thm]{Lemma}
\newtheorem*{lemma*}{Lemma}

\newtheorem{cor}[thm]{Corollary}

\theoremstyle{definition}

\theoremstyle{remark}
\newtheorem{remark}[thm]{Remark}

\definecolor{pink}{rgb}{1,.2,.6}
\definecolor{orange}{rgb}{0.7,0.3,0}
\definecolor{blue}{rgb}{.2,.6,.75}
\definecolor{green}{rgb}{.4,.7,.4}
\definecolor{purple}{RGB}{127,0,255}

\newcommand{\bF}{\mathbb{F}}

\newcommand{\bQ}{\mathbb{Q}}
\newcommand{\bR}{\mathbb{R}}

\newcommand{\bZ}{\mathbb{Z}}

\newcommand{\modd}[1]{\; ( \text{mod} \; #1)}

\newcommand{\sing}{{\rm sing }}

\newcommand{\lam}{\lambda}

\newcommand{\Lbf}{\mathbf{L}}
\newcommand{\Hbf}{\mathbf{H}}
\newcommand{\Mbf}{\mathbf{M}}
\newcommand{\Nbf}{\mathbf{N}}

\newcommand{\tbf}{\mathbf{t}}

\newcommand{\Xbf}{\mathbf{X}}
\newcommand{\Ybf}{\mathbf{Y}}
\newcommand{\Zbf}{\mathbf{Z}}

\newcommand{\abf}{{\bf a}}

\newcommand{\hbf}{{\bf h}}

\newcommand{\ubf}{{\bf u}}
\newcommand{\vbf}{{\bf v}}
\newcommand{\wbf}{{\bf{w}}}

\newcommand{\xbf}{{\bf x}}
\newcommand{\ybf}{{\bf y}}
\newcommand{\zbf}{{\bf z}}

\newcommand{\beq}{\begin{equation}}
\newcommand{\eeq}{\end{equation}}

\newcommand{\ba}{\begin{align}}
\newcommand{\ea}{\end{align}}

\makeatletter
\def\@tocline#1#2#3#4#5#6#7{\relax
  \ifnum #1>\c@tocdepth 
  \else
    \par \addpenalty\@secpenalty\addvspace{#2}%
    \begingroup \hyphenpenalty\@M
    \@ifempty{#4}{%
      \@tempdima\csname r@tocindent\number#1\endcsname\relax
    }{%
      \@tempdima#4\relax
    }%
    \parindent\z@ \leftskip#3\relax \advance\leftskip\@tempdima\relax
    \rightskip\@pnumwidth plus4em \parfillskip-\@pnumwidth
    #5\leavevmode\hskip-\@tempdima
      \ifcase #1
       \or\or \hskip 1em \or \hskip 2em \else \hskip 3em \fi%
      #6\nobreak\relax
    \hfill\hbox to\@pnumwidth{\@tocpagenum{#7}}\par
    \nobreak
    \endgroup
  \fi}
\makeatother

\newcommand{\genlegendre}[4]{%
  \genfrac{(}{)}{}{#1}{#3}{#4}%
  \if\relax\detokenize{#2}\relax\else_{\!#2}\fi
}

\makeatletter
\let\@@pmod\pmod
\DeclareRobustCommand{\pmod}{\@ifstar\@pmods\@@pmod}
\def\@pmods#1{\mkern4mu({\operator@font mod}\mkern 6mu#1)}
\makeatother

\usepackage{enumitem}

\begin{document}

\title{Short character sums of inhomogeneous polynomials}

\author[Rena Chu]{Rena Chu}

\address{Georg-August-Universität Göttingen, Bunsenstraße 3-5, 37073 Göttingen, Germany}
\email{rena.chu@mathematik.uni-goettingen.de}

\begin{abstract}
   Let $p$ be a prime. We prove nontrivial bounds on short sums of Dirichlet characters mod $p$ evaluated at a class of polynomials, not necessarily homogeneous, in $n$ variables and of degree $k$. For large $n$, we further achieve nontrivial bounds for sums over boxes with side-lengths as short as $p^{1/(k-1)+\varepsilon}$, which breaks past the Burgess barrier of $p^{1/4+\varepsilon}$ as soon as $k\geq 6$. In the proof, we develop a new variation of the Burgess amplification method that reduces the problem to bounding additive character sums. This is the first case in which a Burgess-type method succeeds in the inhomogeneous setting.
\end{abstract}

\maketitle

\section{Introduction}
Let $p$ be a prime and $\chi$ be a nonprincipal Dirichlet character mod $p$. One central question in analytic number theory is to bound from above the character sum
    \begin{align*}
        S(N,H) = \sum_{x\in (N,N+H]}\chi(x).
    \end{align*}
The Pólya-Vinogradov bound gives $S(N,H)\ll p^{1/2}\log p$, which is nontrivial for lengths $H$ greater than $p^{1/2}\log p$. In the complementary regime of short character sums, where $H\ll p^{1/2}$, the celebrated work of Burgess \cite{Bur57,Bur62b,Bur62a,Bur63} provides a family of bounds $S(N,H)\ll H^{1-1/r}p^{(r+1)/4r^2}\log p$, for $r\geq 1$, that is nontrivial for $H\gg p^{1/4+\kappa}$ for any $\kappa>0$.

    One generalization of this problem is to bound sums of characters with polynomial arguments.
    Let $n,k\geq 1$ and $F(\Xbf)\in \bZ[X_1,...,X_n]$ be a polynomial of degree $k$. For any $\Nbf=(N_1,...,N_n)$ and $\Hbf=(H_1,...,H_n)$, consider the character sum
    \begin{align*}
        S(F;\Nbf,\Hbf) = \sum_{\substack{x_1,...,x_n \\ x_i\in (N_i,N_i+H_i]}} \chi(F(\xbf)).
    \end{align*}
There is a long history of work on bounding such sums for various classes of polynomials $F$ as well as on the closely related problem of bounding multiplicative character sums over finite fields; these include \cite{DL63,Bur68,Kar68, Kar70,Gil73, Cha09,Cha09_binary, BC10, Kon10, HB16}. In 2020, Pierce and Xu \cite{PX20} proved the first nontrivial bounds for a generic class of forms in arbitrarily many variables and of arbitrary degree, for $H_i\gg p^{\beta_n+\kappa}$, where $\beta_n = 1/2 - 1/2(n+1)$, for all $1\leq i \leq n$. 
More recently, the author \cite{Chu25} built on the work of \cite{Gil73,BC10,Kon10} to prove nontrivial bounds for a class of forms that split over $\overline{\bF}_p$, for  $H_i\gg p^{1/4+\kappa}$. Up until our present work, these results constitute the best known bounds for character sums with polynomial arguments, where in particular, nontrivial bounds have thus far applied only to classes of homogeneous polynomials and to boxes with side-lengths as short as (but not shorter than) the original Burgess threshold, which has remained a longstanding barrier for bounds on $S(N,H)$, $S(F;\Nbf,\Hbf)$, and other generalizations such as sums over finite fields and mixed character sums.

This paper establishes the first nontrivial bounds for a large class of polynomials that are not necessarily homogeneous as well as nontrivial bounds for lengths shorter than the Burgess threshold of $p^{1/4+\kappa}$. More specifically, for a polynomial $F$ in $n$ variables and of degree $k$ such that $n$ is ``large'' and the singular locus of the leading form of $F$ is ``small'', we prove nontrivial bounds on $S(F;\Nbf,\Hbf)$ for $H_i$ as short as $p^{1/(k-1)+\kappa}$, for any $\kappa>0$. Here is our main result.

\begin{thm}\label{thm:main}
    Fix a prime $p$ and let $n\geq 1,k\geq 2$ be integers. Let $F\in \bF_p[X_1,...,X_n]$ be a polynomial in $n$ variables and of degree $k$, and let $F_k$ denote its leading form. 
 Suppose  
    \begin{align}\label{eqn:main_condition}
         n-\dim_{\overline{\bF}_p}\sing F_k \geq 2^{k-2}(k-1)^2(k+1),
    \end{align}
    where $\sing F_k$ denotes the singular locus in affine space of the hypersurface defined by $F_k=0$ over $\bF_p$. Let $\Nbf=(N_1,...,N_n)$ and $\Hbf=(H_1,...,H_n)$ with $H_i\leq p$, and define $\|\Hbf\|=H_1\cdots H_n$ and $H_\mathrm{min}=\min_i H_i$.  Let $\kappa>0$ and $H_{\mathrm{min}}\gg p^{1/(k-1)+\kappa}$.
    Then for any $\varepsilon>0$,
      \begin{align}\label{eqn:main_bound}
            S(F;\Nbf,\Hbf) \ll_{n,k,\varepsilon} \|\Hbf\| p^{-\kappa/2 + \varepsilon}.
        \end{align}
\end{thm}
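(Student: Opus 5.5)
We plan a Burgess-type amplification in which the usual multiplicative shift $x\mapsto x+ah$ is replaced by a dilation adapted to the leading form. The reduction will end in a complete additive character sum (a Weyl sum) in many variables, controlled by the Birch rank condition \eqref{eqn:main_condition}.

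\textbf{Step 1: amplification.} Choose parameters $A,B$ with $AB\le p^{-\kappa/2}H_{\min}$. For $1\le a\le A$ and $\hbf\in(0,B]^n$, shifting $\xbf\mapsto \xbf+a\hbf$ changes $S(F;\Nbf,\Hbf)$ only by a boundary error $O(\|\Hbf\|AB/H_{\min})$. Averaging over $a,\hbf$ gives
\[
S \ll \frac{1}{AB^n}\sum_{\xbf}\Bigl|\sum_{a,\hbf}\chi(F(\xbf+a\hbf))\Bigr| + \|\Hbf\|\frac{AB}{H_{\min}}.
\]
In the homogeneous case one would write $F(\xbf+a\hbf)=a^kF(\hbf+\bar a\xbf)$. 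For inhomogeneous $F$ this fails, so the plan is different. Expand $F(\xbf+a\hbf)=\sum_{j=0}^k a^j G_j(\xbf,\hbf)$, where $G_k(\xbf,\hbf)=F_k(\hbf)$. For fixed $\xbf$ we view the inner sum as a function of the polynomial data $(G_0,\dots,G_k)$ evaluated at $\xbf$.

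\textbf{Step 2: Hölder and counting.} Apply Hölder with exponent $2r$ over $\xbf$. Then $\sum_{\xbf}|\cdots|^{2r}$ is bounded by the number of $\xbf\in\bF_p^n$ for which the tuple $(G_j(\xbf,\cdot))$ takes given values, weighted against a complete sum
\[
\sum_{\mathbf{c}}\Bigl|\sum_{a}\chi\Bigl(\sum_j a^jc_j\Bigr)\Bigr|^{2r},
\]
which is handled by Weil's bound. The key new ingredient is the counting problem: for each value, count the $\xbf$ in the box with prescribed values of the lower-order polynomials in $\xbf$. We detect these conditions with additive characters, $\frac1p\sum_t e(t(\cdot)/p)$. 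This turns the count into exponential sums of the form
\[
\sum_{\xbf\in\text{box}} e\bigl(t\,P(\xbf)/p\bigr),
\]
where $P$ has degree up to $k-1$ in $\xbf$ and its top part is governed by $F_k$. Since the shifts $\hbf$ enter through derivatives of $F_k$, the relevant forms are of degree at most $k-1$.

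\textbf{Step 3: Birch-type bound (the main obstacle).} We bound these incomplete Weyl sums over boxes of side $H_{\min}\gg p^{1/(k-1)+\kappa}$. Birch's Weyl differencing applied to a form of degree $k-1$ requires box sides exceeding $p^{1/(k-1)}$, which is where the threshold comes from. The differencing also needs the codimension of the singular locus to exceed $2^{k-2}(k-1)\times(\text{number of forms})$. With $k+1$ coefficient polynomials and an extra factor $(k-1)$ from the Hölder exponent, this matches \eqref{eqn:main_condition}. This step is the hardest: we must perform $k-2$ Weyl differencings on a system of forms derived from $F_k$. We must also show that the singular locus of the system is controlled by $\sing F_k$, and keep track of the minor-arc/major-arc dichotomy, where here "major arcs" means $t=0$.

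\textbf{Step 4: optimization.} Combine the $t=0$ main term, the Weyl savings for $t\ne 0$, and the Weil bound. Choosing $r$ large and $A,B$ as powers of $p$ balanced against $H_{\min}p^{-\kappa/2}$ then gives the saving $p^{-\kappa/2+\varepsilon}$ in \eqref{eqn:main_bound}.
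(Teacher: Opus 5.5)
Your Steps 1--2 have the same skeleton as the paper (shift by a product, expand $F(\xbf+a\hbf)=\sum_i a^i f_i(\xbf,\hbf)$, H\"older, a complete sum over $\mathbf{c}\in\bF_p^{k+1}$, additive characters for the count). Step 3, however, contains the entire difficulty of the theorem, and it is a genuine gap: it is unproved, and as described it would fail.

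\textbf{The exponential sums are the wrong ones.} To reach a complete sum over $\mathbf{c}\in\bF_p^{k+1}$ you must pull $\hbf$ out of the absolute value. So the count is over pairs $(\xbf,\hbf)$, and the sums are $\sum_{(\xbf,\hbf)}e_p(\sum_{i=0}^k a_i f_i(\xbf,\hbf))$ over two boxes of very different sides, $\approx H$ and $B$. The $a_i$-term has degree exactly $i$ in $\hbf$ and at most $k-i$ in $\xbf$. Two of your claims then fail.
\begin{itemize}
\item The phase does not have degree $\le k-1$ in $\xbf$: the $a_0$-term is $a_0F(\xbf)$.
\item For frequencies supported on high indices, e.g.\ $\abf=(0,\dots,0,a_k)$, the phase is $a_kF_k(\hbf)/p$ and does not depend on $\xbf$. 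No bound for sums over $\xbf$ in a box of side $H_{\min}$ can give a saving there.
\end{itemize}
The paper performs $k-1$ differencings, not $k-2$, split between $\xbf$ and $\hbf$ according to which $a_I$ is nonzero, so that only the $a_I,a_{I+1}$ terms survive as linear phases. The differences taken in the short $\hbf$-box are what produce the condition $H^k>pT^{k-1}H^{(k-1)\theta}$ with $T=H/K$.

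\textbf{The reduction to $\sing F_k$ is the crux, not a detail.} Birch's method applied to the $k+1$ polynomials $f_i$ as separate forms gives a condition on the Jacobian rank locus of the system, which has no clean description in terms of $F_k$. The paper's route (following Brandes) is as follows.
\begin{itemize}
\item Write each homogeneous part $F_j$ as a symmetric multilinear form $\Phi_j$. After differencing, the linear coefficients are multilinear forms $\Psi_m(\hbf_1,\dots,\hbf_{k-1})$ coming from $\Phi_k$ alone.
\item Since the frequencies are $a_I/p$, a large sum forces $\Psi_m\equiv0$ mod $p$ for many small tuples.
\item Schmidt's point count over $\bF_p$ turns this into a lower bound on the dimension of the zero locus of the $\Psi_m$.
\item Intersecting with the diagonal, where $\Psi_m(\hbf,\dots,\hbf)=k^{-1}(\partial F_k/\partial X_m)(\hbf)$, yields the lower bound on $\dim\sing F_k$.
\end{itemize}

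\textbf{Your parameter bookkeeping is also off.} The threshold does not come from ``a degree-$(k-1)$ form needs sides $>p^{1/(k-1)}$''. The paper's bound gives $S_1\ll H^{2n}K^{2n}p^{-(k+1)+\varepsilon}$ whenever $H\ge p^{(1/k)(1+1/c)}T^{1-1/k}$ and $n-\dim\sing F_k\ge 2^{k-2}(k^2-1)c$. Taking $c=k-1$ yields both $p^{1/(k-1)}$ and the variable condition. The factor $k+1$ comes from beating the $p^{k+1}$ frequency tuples, and the H\"older exponent plays no role.

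\textbf{$S_2$ is not handled by Weil.} It is a $(k+1)$-dimensional sum, to which Weil does not apply directly. The paper takes $r\le(k+1)/2$ and uses a Vandermonde change of variables to show that every tuple with an unrepeated $t_i$ contributes exactly $0$. Hence $S_2\ll T^rp^{k+1}$, and the final bound $H^nT^{-1/2}$ is independent of $r$, so choosing $r$ large buys nothing.

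\textbf{Your sharp-cutoff shift is too lossy.} Grant the paper's $S_1$ estimate. Its admissibility condition involves $\lambda=H/B\le p^{\kappa k/(k-1)}$, while the main term saves $A^{1/2}$ and the boundary term costs $AB/H=A/\lambda$. The best achievable saving is then $\lambda^{1/3}\le p^{\kappa k/(3(k-1))}$, which falls short of $p^{\kappa/2}$ once $k\ge4$. The paper avoids this with the Friedlander--Iwaniec weight and Fourier inversion, which permit $TK=H$ with no boundary error.
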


In particular, we achieve nontrivial bounds for sums as short as $H_{\mathrm{min}}\gg p^{1/(k-1)+\kappa}$ for any $\kappa>0$, which falls in the regime of short sums as soon as $k\geq 4$ and beats the Burgess threshold of $p^{1/4+\kappa}$ as soon as $k\geq 6$, subject to the condition \eqref{eqn:main_condition}.  This is the first time that a nontrivial bound on a Dirichlet character sum with homogeneous polynomial arguments breaks past the Burgess barrier. It is also the first time that a Burgess-type method succeeds in an inhomogeneous setting. We further note that for fixed $n,k$ with $n\geq 2^{k-2}(k-1)^2(k+1)$, given a form $P\in \bZ[X_1,...,X_n]$ of degree $k$ that is nonsingular (in projective space) over $\overline{\bQ}$, its reduction modulo $p$ is nonsingular (in projective space) over $\overline{\bF}_p$ for all but finitely many primes $p$. Such polynomials are generic in the moduli space of forms in $n$ variables and of degree $k$.

We assume throughout that $p\gg_{n,k} 1$, since otherwise, \eqref{eqn:main_bound} holds trivially. In the proof, we present a more general version, where, for any $c>0$, we may replace the right-hand side of \eqref{eqn:main_condition} by $2^{k-2}(k-1)(k+1)c$ and the condition $H_{\min}\gg p^{1/(k-1)+\kappa}$ by $H_{\min}\gg p^{(1/k)(1+1/c)+\kappa}$. This reflects a trade-off between the number of variables required and the range of $H_i$ for which we obtain a nontrivial bound on $S(F;\Nbf,\Hbf)$. For example, the threshold $p^{(1/k)(1+1/c)}$ is less than $p^{1/2}$ as soon as $c>2/(k-2)$, but to dip below $p^{1/4}$ would require more variables, with $c>4/(k-4)$, and to reach $p^{1/(k-1)}$ as in Theorem \ref{thm:main} demands $c=k-1$.

By restricting $F$ to a class of polynomials with diagonal structure, we can weaken the condition on the number of variables relative to the degree.
  \begin{thm}\label{thm:main_diagonal}
     Fix a prime $p$ and let  $n\geq1, k\geq 5$ be integers. Let $F=G_1(X_1)+\cdots + G_{n}(X_{n})$, where each $G_i\in \bF_p[X_i]$ is of degree $k$.
     Let $\Nbf=(N_1,...,N_n)$ and $\Hbf=(H_1,...,H_n)$ with $H_i\leq p$. Suppose $n\geq (k+1)k(k-1)(k-2)/4$. Let $\kappa>0$ and $H_{\mathrm{min}}\gg p^{2/(k-2)  + \kappa}$. Then for any $\varepsilon>0$,
        \begin{align*}
            S(F;\Nbf,\Hbf) \ll_{n,k,\varepsilon} \|\Hbf\| p^{-\kappa/2+\varepsilon}.
        \end{align*}
 \end{thm}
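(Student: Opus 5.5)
We plan to run the same Burgess-type amplification used for Theorem~\ref{thm:main}, and to exploit the diagonal structure only at the final step, where the general Birch-type input is replaced by one-variable Weyl-sum estimates.

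\textbf{Step 1: amplification.} For each $i$ we shift $x_i \mapsto x_i + a t$ with $a\in[1,A]$ and $t\in[1,T]$, where $AT \le p^{-\varepsilon}H_{\min}$. The boundary error is $\|\Hbf\|AT/H_{\min}$. Averaging gives
\begin{align*}
S(F;\Nbf,\Hbf) \ll \frac{1}{AT}\sum_{a}\sum_{\xbf}\Big|\sum_t \chi\big(F(\xbf+at\mathbf{1})\big)\Big| + \|\Hbf\|\frac{AT}{H_{\min}}.
\end{align*}
For inhomogeneous $F$ we cannot factor out $a^k$ as in Burgess. Instead, we expand $F(\xbf+at\mathbf 1)=\sum_j c_j(\xbf,a)t^j$. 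By diagonality, each coefficient splits as
\begin{align*}
c_j(\xbf,a) = \sum_i \binom{k}{j}\alpha_i a^j x_i^{k-j}+(\text{lower order in } x_i).
\end{align*}
We group the pairs $(\xbf,a)$ according to the coefficient vector $\mathbf c=(c_0,\dots,c_k)\in\bF_p^{k+1}$. Let $\nu(\mathbf c)$ count the pairs in each group. Then, by H\"older with an even exponent $2r$, the inner sum is bounded by
\begin{align*}
\Big(\sum_{\mathbf c}\nu(\mathbf c)\Big)^{1-1/r}\Big(\sum_{\mathbf c}\nu(\mathbf c)^2\Big)^{1/2r}\Big(\sum_{\mathbf c\in\bF_p^{k+1}}\Big|\sum_t\chi\Big(\sum_j c_jt^j\Big)\Big|^{2r}\Big)^{1/2r}.
\end{align*}
We bound the last factor by the Weil bound. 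The complete moment is $\ll p^{k+1}T^{2r}\cdot(T^{-r}+ p^{-r/2})$ up to the usual diagonal terms, treated as in the proof of Theorem~\ref{thm:main}.

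\textbf{Step 2: the energy $\sum\nu(\mathbf c)^2$.} This counts solutions of $c_j(\xbf,a)=c_j(\ybf,b)$ for $0\le j\le k$. We detect these congruences with additive characters, which turns the energy into
\begin{align*}
p^{-(k+1)}\sum_{\hbf\in\bF_p^{k+1}}\Big|\sum_a\prod_i\sum_{x_i} e_p\big(h\cdot \mathbf c_i(x_i,a)\big)\Big|^2.
\end{align*}
The target is that $\sum\nu^2$ is close to $(\|\Hbf\|A)^2/p^{k+1}$, so the diagonal (zero-frequency) term dominates. For $\hbf\ne0$, the phase $\sum_j h_j c_j$ restricted to $x_i$ is a polynomial in $x_i$ of degree at most $k$. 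Its top nonvanishing coefficient is $\alpha_i\binom{k}{j}h_ja^j$ for the largest $j$ with $h_j\neq0$, except when only $h_k\ne 0$, where the $x_i$-dependence disappears. For that degenerate case we count directly: $c_k=a^k\sum_i\alpha_i$ forces $a^k=b^k$, costing only $O(A)$.

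\textbf{Step 3: the main obstacle.} We must get power savings in $\prod_i\sum_{x_i\le H_i} e_p(\phi_{i,a}(x_i))$ uniformly in $\hbf$ and $a$, over short intervals $H_i\approx p^{2/(k-2)}$. The $x_i$-degree $d=k-j$ can be as small as $1$, so we split by degree. For large $d$ we use Vinogradov mean value (Bourgain--Demeter--Guth) and Weyl bounds. We take the $n$-th moment over the product and apply H\"older over $i$, which produces a mean value with $n$ variables in each coordinate. To dodge minor-arc losses, we average over $a$ instead of bounding pointwise, since $h_ja^j$ then runs over many residues. The count $n\ge (k+1)k(k-1)(k-2)/4$ should come from requiring, for each of the $k+1$ frequency levels, $s\ge d(d+1)/2$ variables from the Vinogradov main conjecture. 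It also requires a loss in $H$ of $p$-adic size $p^{d}/H^{d}$ to be beaten, and for the worst level this gives $H\gg p^{2/(k-2)}$.

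Finally we optimize $r$, $A$, $T$ with $AT=H_{\min}p^{-\varepsilon}$. If the energy is near-diagonal and $T\approx p^{1/2}$-type constraints are avoided through the choice $r\approx k$, we obtain the saving $p^{-\kappa/2+\varepsilon}$. I expect Step 3, the uniform exponential-sum bound across all degree levels (especially the low-degree levels where few variables act), to be the crux.
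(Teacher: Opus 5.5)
The gap is in Step~1. Shifting every coordinate by the same scalar $at$ (i.e.\ taking $\ybf=a\mathbf{1}$) makes your energy target false.

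Here $c_j(\xbf,a)=a^j\sum_i G_i^{(j)}(x_i)/j!$, so $c_k=a^k\sum_i\alpha_i$ depends on $a$ alone. Hence $\mathbf{c}$ takes at most $Ap^k$ values, and Cauchy--Schwarz forces $\sum_{\mathbf c}\nu(\mathbf c)^2\ge\|\Hbf\|^2A/p^{k}$. That is a factor $p/A$ above $(\|\Hbf\|A)^2/p^{k+1}$. Your remark that $a^k=b^k$ ``costs only $O(A)$'' is exactly this loss, since the target allows only about $A^2/p$ pairs $(a,b)$.

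It gets worse. For $F=\sum_iX_i^k$ on a box near the origin, $c_{k-l}=\binom{k}{l}a^{k-l}\sum_ix_i^l$ ($1\le l\le m$) take $\ll_{n,m}H^{m(m+1)/2}$ joint values for each $a$. So the excess is $\Lambda\gg_{n,m}p^{m+1}/(AH^{m(m+1)/2})$ for every $m$ with $nH^m<p$. H\"older then gives at best $\|\Hbf\|T^{-1/2}\Lambda^{1/2r}$ with $AT\le H$. The $S_2$ estimate one can actually prove needs $r\le(k+1)/2$. Taking $m$ maximal, this is trivial at $H=p^{2/(k-2)+\kappa}$ for small $\kappa$ as soon as $k\ge7$.

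The same defect is why Step~3 has no mechanism. The top $x_i$-coefficient comes from the \emph{smallest} $j$ with $h_j\neq0$, not the largest. For $\hbf$ whose smallest such $j$ is close to $k$, the phase has $x_i$-degree $k-j\le2$, and nothing saves uniformly in $x_i$ when $H<p^{1/2}$. Averaging over $a$ cannot compensate: one variable shared by all $n$ coordinates, sitting outside $\prod_i$, yields at most a one-dimensional saving. Beating $p^{k+1}$ frequencies needs a saving in each of the $n$ factors.

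The missing idea is to shift by $t\ybf$ with an \emph{independent} vector $\ybf\in(0,K]^n$, $TK=H$. Then $f_i(\xbf,\ybf)=\sum_jG_j^{(i)}(x_j)y_j^i/i!$, so the top coordinate $\sum_j\alpha_jy_j^k$ is no longer degenerate. Each exponential sum in the energy factors as $\prod_{j=1}^nS_j(\hbf)$, with $S_j$ a sum over $(x_j,y_j)$ only.
\begin{itemize}
\item If the lowest nonzero index $\ell$ of $\hbf$ is at most $\lfloor k/2\rfloor$, fix $y_j$. The phase has degree $k-\ell\ge3$ in $x_j$ (this is where $k\ge5$ enters), with leading coefficient $h_\ell\binom{k}{\ell}\alpha_jy_j^\ell\not\equiv0$.
\item Otherwise the highest nonzero index $\ell$ exceeds $\lfloor k/2\rfloor$. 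For each $x_j$ off the at most $k$ roots of the leading coefficient, the phase has degree $\ell\ge3$ in $y_j$.
\end{itemize}
The Vinogradov-based Weyl bound (with $q=p$) then saves at least $K^{-\sigma(k)}$ in every one of the $n$ factors, once $H\ge p^{1/(\lceil k/2\rceil-1)}$ and $K\ge p^{1/\lfloor k/2\rfloor}$. Both are implied by $K\gg p^{2/(k-2)}$. With $K\approx p^{2/(k-2)}$, the requirement $K^{2n\sigma(k)}\ge p^{k+1}$ is exactly $n\ge(k+1)k(k-1)(k-2)/4$. Finally $T=H/K=p^{\kappa}$ yields $p^{-\kappa/2}$.

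Two further corrections.
\begin{itemize}
\item Your sharp-cutoff boundary term $\|\Hbf\|AT/H_{\min}$ competes with $T^{-1/2}$. With your choice $AT\le p^{-\varepsilon}H_{\min}$ it only saves $p^{-\varepsilon}$; even optimally balanced it gives about $p^{-\kappa/3}$, not $p^{-\kappa/2}$. The paper eliminates it with the Friedlander--Iwaniec weight and Fourier inversion, at a cost of only $(\log p)^n$.
\item $S_2$ needs no Weil bound. For $r\le(k+1)/2$, any tuple in which some $t_i$ occurs exactly once contributes exactly $0$: distinct rows $(1,t,\dots,t^k)$ are independent, so a linear change of variables isolates $\sum_w\chi(w)=0$. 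This gives $S_2\ll_rT^rp^{k+1}$ and a final bound $\|\Hbf\|T^{-1/2}p^{\varepsilon}$ independent of $r$. Your choice $r\approx k$ breaks this, since more than $k+1$ distinct rows are linearly dependent.
\end{itemize}
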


This is shorter than the Burgess threshold as soon as $k\geq 11$. Here, the condition on the number of variables depends polynomially rather than exponentially in Theorem \ref{thm:main} on the degree. Assuming $p$ is large relative to $k$, one example of a polynomial whose leading form is nonsingular over $\overline{\bF}_p$ that satisfies the hypotheses of Theorem \ref{thm:main} but not of Theorem \ref{thm:main_diagonal} is $F=X_1X_2^{k-1} + X_2X_3^{k-1}+\cdots + X_{n-1}X_n^{k-1} + X_nX_1^{k-1} + G$, where $k\geq 3$, $n\geq 2^{k-2}(k-1)^2(k+1)$, and $G\in \bF_p[X_1,...,X_n]$ is any polynomial of degree at most $k-1$. Furthermore, an example of a polynomial whose leading form is singular and satisfies the hypotheses of Theorem \ref{thm:main} is $F=X_1X_2^{k-1} + X_2X_3^{k-1}+\cdots + X_{n-1}X_n^{k-1}+G$, where again $k\geq 3$, $n-1\geq 2^{k-2}(k-1)^2(k+1)$, and $G$ is any polynomial of degree at most $k-1$; here, the dimension of the singular locus of $F_k$ is one.

Our results do not apply to the class of forms considered in \cite{Chu25}, since we require $n$ to be much larger than $k$ while \cite{Chu25} assumes $n=k$. On the other hand, \cite{PX20} treats forms with no conditions on $n,k$; for every polynomial to which both Theorem \ref{thm:main} and \cite[Theorem 1.1]{PX20} apply,  we give nontrivial bounds for a wider range of lengths $H_i$, beating the threshold of $\beta_n=n/2(n+1)$ in \cite{PX20}, for all $n\geq 2$, as soon as $k\geq 5$ (and analogously $k\geq 9$ for Theorem \ref{thm:main_diagonal}).
We further obtain savings of $\kappa/2$, improving that in the shape of $\kappa^2$ (for small $\kappa$) from the previous works of \cite{PX20,Chu25}.
Finally, we remark that it still remains an open question to prove nontrivial bounds for short sums of (inhomogeneous) polynomials in one variable, e.g. $F(X)=X^2+1$.
%

\subsection{Method of proof}
We follow the general framework of amplification first developed by Burgess in his series of papers \cite{Bur57,Bur62b,Bur62a}. This has since been refined by \cite{GM10,HB13} and subsequently developed in multi-dimensional settings. The modern variations of this amplification that have appeared in previous known works on bounding $S(F;\Nbf,\Hbf)$ can be sorted into two distinct adaptations, which we call method I and method II (following the terminology in \cite{Chu25}). In both versions, the underlying idea is to redistribute copies of the original box of summation sufficiently many times so that a different variable (nearly) covers the entire set of residue classes mod $p$, thus producing a complete sum. The standard application of H\"older's inequality then separates a count $S_1$ of redundancies from redistribution and a complete sum $S_2$ of the $2r$-th moment of a character sum. Methods I and II differ in their redistribution techniques, which yield different shapes of $S_1$ and $S_2$, which in turn require different estimation strategies. In this paper, we develop a new variation that is distinct from both existing methods. Before presenting this, we give a brief overview of the two current methods (and refer the reader to \cite[\S 1.2]{Chu25} for more details).

In method I, which follows closely to the original technique of Burgess and then extended to the multi-dimensional case in \cite{Pie16,PX20}, one begins by fixing a prime $q$ different to the modulus $p$ and rewriting each element $\xbf \in (\Nbf,\Nbf+\Hbf]$ as a linear combination of $p$ and $q$, for example $\xbf = \abf p + \tbf q$, with $\abf \modd q^n$ and $t_i \in ((N_i-a_ip)/q, (N_i-a_ip+H_i)/q]$. Then by periodicity and multiplicativity of $\chi$ and homogeneity of $F$, $\chi(F(\abf p + \tbf q)) = \chi(q^{\deg F})\chi(F(\tbf))$. Averaging over $q$'s as well as the starting points and lengths of the range of $t_i$'s, one obtains, roughly,
\begin{align}\label{eqn:overview_tasks}
        S_1=\sum_{\substack{z_1,...,z_n \\ z_i \modd p}}\mathcal{A}(z_1,...,z_n)^2, 
        \qquad S_2=\sum_{\substack{z_1,...,z_n \\ z_i \modd p}} |\sum_{\substack{t_1,...,t_n\\ t_i\in (0,T_i]}} \chi(F(\tbf + \zbf))|^{2r},
    \end{align}
    where $\mathcal{A}(z_1,...,z_n)$ counts the number of primes $q$ (lying in a dyadic range) and $\abf\modd q ^n$ such that $z_i$ lies in $((N_i-a_ip-H_i)/q, (N_i-a_ip)/q]$ for $1\leq i \leq n$. (We refer the reader to \cite[\S 5]{PX20} for a precise deduction.)

Method II originates in the work of Karacuba \cite{Kar68,Kar70} (at least in the context of Dirichlet character sums) who bounded character sums over finite fields. The amplification here starts by shifting the variable $\xbf$ by a product $t\ybf$. (We mention also the work of Friedlander and Iwaniec \cite{FI93} who, following Iwaniec as recorded in \cite{Fri87}, developed a variant of method II to give an alternate proof of the classical Burgess bounds; in \S \ref{sec:amplification}, we apply a specific technique from this variant.) For characters with polynomial arguments,
method II requires more stringent conditions on $F$, more precisely, that the polynomial is a binary quadratic form in \cite{Cha09,Cha09_binary,HB16} and, in the $n$-dimensional case, that the polynomial has degree $n$ and splits over $\bF_p$ in \cite{BC10} and over $\overline{\bF}_p$ in \cite{Chu25} into linearly independent linear forms. For illustration, let $F$ be a single linear form $L$ so that $L(\xbf + t\ybf) = L(\xbf) +tL(\ybf)$, and then by multiplicativity, $\chi(L(\xbf + t\ybf)) = \chi(L(\ybf))\chi(t + L(\xbf)\overline{L}(\ybf))$. Average over many $t$'s and $\ybf$'s, and let $z=L(\xbf)\overline{L}(\ybf)$, so that summing over the variable $t$ generates a character sum that is linear in $t$ and independent of $\xbf,\ybf$. In the more general setting considered in \cite{Chu25}, one obtains
    \begin{align}\label{eqn:overview_tasks2}
           S_1=\sum_{\substack{z_1,...,z_s \\ z_i \in \bF_{p^{n_i}}}}\eta_F(z_1,...,z_s)^2, 
        \qquad S_2=\sum_{\substack{z_1,...,z_s \\ z_i \in \bF_{p^{n_i}}}} |\sum_{t\in (0,T]} \prod_{i=1}^s \psi_i(t+z_i)|^{2r},
    \end{align}
    where $\eta_F(z_1,...,z_s)$ counts the number of $(\xbf,\ybf)$'s in a box such that the simultaneous equations $z_i=\lambda_i(\xbf)\lambda_i(\ybf)^{-1}\in \bF_{p^{n_i}}$ for $1\leq i \leq s$ hold. (We omit the precise definitions of $s, n_i, \psi_i, \lambda_i$ and simply remark that these depend on $F$ and the $\psi_i$'s are multiplicative characters of $\bF_{p^{n_i}}$, where $n_1+\cdots + n_s = n$.)

    In the one-dimensional setting, the two methods described above produce the same bounds (see e.g. \cite[\S 8]{Pie20_super} for a comprehensive illustration of the modern formulation of method I and \cite[\S 12.4]{IK04} for method II), while in higher dimensions, there are significant differences. First is the class of polynomials to which each method is applicable:\ method I accommodates a generic class of forms for any $n,k$, while method II requires the form to satisfy $n=k$ and split into linearly independent factors. Second is the range of nontriviality:\ method I gives nontrivial bounds for $H\gg p^{\beta_n+\kappa}$, where recall $\beta_n=n/2(n+1)$, while method II recovers the Burgess threshold of $H\gg p^{1/4+\kappa}$. The key reason for this difference is that the character sum over $t$ in $S_2$ is $n$-dimensional in method I and one-dimensional in method II, as seen above in \eqref{eqn:overview_tasks} and \eqref{eqn:overview_tasks2}, respectively; from this perspective, the latter method is more efficient. Third is the principal difficulty of each approach:\ the heart of method I lies in estimating $S_2$ while the challenge of method II is bounding $S_1$. This distinction is apparent from the shapes of these respective sums: they each carry the dependence on $F$. Pierce and Xu \cite{PX20} carried out method I and applied a stratification result of \cite{Xu20} to bound $S_2$ which consists of complete, multi-dimensional, multiplicative character sums evaluated at highly singular polynomials, a case in which the Weil bound is not applicable. Their result produced bounds for the threshold of $p^{\beta_n}$, optimal for method I, and for the largest class of polynomials that is within reach of the method (see e.g. \cite[\S 8.2]{PX20}). 
Analogously, the author in \cite{Chu25} applied method II and proved an essentially optimal bound on $S_1$ in \eqref{eqn:overview_tasks2}, which is a multiplicative energy, using properties of lattices as introduced in \cite{Kon10} and a recursion technique from \cite{BC10}. By capitalizing on a correspondence between characters mod $p$ and characters of a finite field $\mathbb{F}_{p^n}$ (see e.g. \cite[\S 2.1]{Chu25}), she produced nontrivial bounds for the largest class of polynomials to which method II is applicable. 
    These works therefore have maximized the capabilities of the two methods, respectively, and any further substantial results would necessitate new ideas.

In this paper, we develop a novel version of the amplification method that extends the philosophy in method II of shifting by a product, to apply to any polynomial, a priori, without restrictions. This combines the efficiency of the second method with the flexibility of the first, and now we describe this new approach. We begin,  as in method II, with the shift $t\ybf$. The usual method II then exploits the assumed linear structure of $F$ to produce a character sum that is a product of linear factors in $t$; this reliance on linearity creates a fundamental barrier for method II to apply to other polynomials. 
The key to our new approach arises from the crucial observation that while such linearity is not present in a generic polynomial, we can nevertheless produce a polynomial in $t$. Indeed, by Taylor expansion (or the binomial theorem), we may rewrite 
    \begin{align}\label{eqn:method_taylor}
        F(\xbf + t\ybf) = \sum_{i=0}^k t^i  f_i(\xbf,\ybf), \qquad \text{where } f_i(\xbf,\ybf) = \sum_{\beta:|\beta| = i} \frac{(\partial^\beta F)(\xbf)}{\beta!} \ybf^\beta
    \end{align}
   and $\beta$ is a multi-index,
to obtain a degree $k$ polynomial in $t$. This step allows the amplification process to continue, with no condition so far on the shape of $F$. After the standard application of H\"{o}lder's inequality, we get 
    \begin{align}\label{eqn:S1S2_current}
    S_1=
    \sum_{\substack{z_0,...,z_k\\ z_i \modd p}}\eta(z_0,...,z_k)^2, \qquad  
     S_2=    \sum_{\substack{z_0,...,z_k\\ z_i \modd p}}|\sum_{t\in (0,T]}\chi(z_0 + z_1t + \cdots + z_k t^k)|^{2r},
\end{align}
where $\eta(z_0,...,z_k)$ counts the number of solutions $(\xbf,\ybf)$ in certain boxes to the simultaneous equations $z_i\equiv f_i(\xbf,\ybf) \modd p$ for $0\leq i \leq k$. 
We stress that this new amplification reduces an $n$-dimensional problem to a $(k+1)$-dimensional one, thus producing savings when $n$ is large relative to $k$. This is in contrast to methods I and II, where the setting remains in $n$ dimensions after amplification. The threshold of $p^{1/(k-1)+\kappa}$ as well as the conditions on $n, k$, and the dimension of $\sing F_k$ in Theorem \ref{thm:main} derive from the estimation of multi-dimensional exponential sums when bounding $S_1$. Likewise, the threshold of $p^{2/(k-2)+\kappa}$ and the conditions on $n,k$ in Theorem \ref{thm:main_diagonal} derive from the average of one-dimensional exponential sums when bounding $S_1$. 

The sums $S_1$ and $S_2$ in \eqref{eqn:S1S2_current} take on different shapes compared to their analogues in \eqref{eqn:overview_tasks} and \eqref{eqn:overview_tasks2}; these require new techniques to estimate, which we now discuss.

\subsubsection{On bounding $S_1$}
We first apply orthogonality to rewrite $S_1$ in \eqref{eqn:S1S2_current} in terms of exponential sums of the form
    \begin{align}\label{eqn:method_expsum}
        \sum_{(\xbf,\ybf)\in (\Nbf-\Hbf,\Nbf+\Hbf]\times (0,K]^n} e_p(\sum_{i=0}^k a_i f_i(\xbf,\ybf)),
    \end{align}
where $0\leq a_i \leq p-1$.
This shifts the problem of estimating a multiplicative character sum to additive character sums, which is an interesting feature of this method that had not appeared previously in the estimation of short, (purely) multiplicative character sums. 

There is vast literature on estimating exponential sums, and most known bounds on short exponential sums in many variables derive broadly from two directions. One is multi-dimensional Weyl differencing. For sums of exponentials evaluated at a single polynomial, Weyl differencing was generalized in the works of e.g. Tartakovsky \cite{Tar35} for a general polynomial (but with small savings), Davenport \cite{Dav59,Dav62_cubic29,Dav63_cubic16} for cubic forms, Chowla and Davenport \cite{ChoDav61} for binary cubics, Birch and Davenport \cite{BirDav61} for polynomials whose leading forms are ``diagonalizable'' over the complex numbers, and Wooley \cite{Woo99_weyl,Woo00_weyl} for binary polynomials of arbitrary degree (with savings comparable to the one-dimensional case). For systems of polynomials, Weyl differencing was further developed in the seminal works of Birch \cite{Bir62} and then of Schmidt \cite{Sch85_birch}, who applied the circle method to count solutions to systems of forms. Both works established bounds on exponential sums of the shape
    \begin{align}\label{eqn:birch_sum}
        \sum_{\xbf\in P\mathscr{B}} e(\sum_{i=1}^R \alpha_i g_i(\xbf)),
    \end{align}
    where $P$ is large, $\mathscr{B}$ is an $n$-dimensional box with side-length at most one and
    the $g_i$'s are forms of the same degree. Bounds on \eqref{eqn:birch_sum} were given by Birch, subject to a geometric condition involving a ``singular locus'', which consists of $\xbf$'s for which
    \begin{align}\label{eqn:birch_locus}
        \mathrm{rank}_{i,j} ((\partial g_i / \partial X_j)(\xbf)) <R.
    \end{align}
    On the other hand, Schmidt provided bounds subject to an algebraic condition involving the ``$h$-invariant'' of a form (which is roughly the smallest number $h$ such that the form can be written as a sum of $h$ many products). 
    Following their work, various adaptations of Weyl differencing have been developed to study related questions involving exponential sums; this includes, for example, the works of 
    \cite{Bra14, Sch14, Bra15,Die15, BHB17, BP17_birch, Mye18_quads, Bra21,Yam25}.

Another standard approach is Vinogradov's method of extracting point-wise bounds on exponential sums from mean value estimates; this was one of the main consequences of Vinogradov's mean value theorem. 
In the one-dimensional case, the resolution of the main conjecture by \cite{Woo16,BDG16,Woo19} gives the best known bound on one-dimensional Weyl sums, as recorded in \cite{Bou16}, with savings of $1/k(k-1)$, where $k$ is the degree of the polynomial. For $k\geq 6$, this is considerably better than the savings of $1/2^{k-1}$ produced by Weyl differencing. In higher dimensions, Arhipov, Karacuba and \v Cubarikov \cite{AKC80_book} extended the classical method of Vinogradov to sums of a single polynomial in many variables. Later, Parsell \cite{Par05} proved bounds on the number of solutions to a generalization of the Vinogradov system, which was subsequently improved by Parsell, Prendiville, and Wooley in \cite{PPW13}, with the resolution of the analogous main conjecture shown by  Guo and Zhang \cite{GZ19}. In \cite{PPW13}, the authors deduce bounds on a multi-dimensional Weyl sum associated to the Parsell-Vinogradov system. We note, however, that for an $n$-dimensional sum over a box with side-length $H$, this method saves only a small power of $H$ instead of a small power of $H^n$. Hence we will only apply this method when $n=1$.

We now return our focus back to the sum \eqref{eqn:method_expsum}, which resembles \eqref{eqn:birch_sum} in Birch's work, suggesting an implementation of his framework. But first, let us highlight four differences between our setting and that of Birch. To begin with, our system consists of polynomials that are not necessarily homogeneous (though this is not a problem for Weyl differencing), and our variables $\xbf,\ybf$ lie in boxes of different side-lengths; this second feature persists and requires additional work. The third and central complication arises from the condition on the singular locus; a direct adaptation of \cite{Bir62} generates a condition on the set of points for which \eqref{eqn:birch_locus} holds, with $g_i$ replaced by $f_i$ from \eqref{eqn:method_taylor}. Without an explicit description of this set, we would be forced to impose a rather convoluted assumption on the polynomials $F$ to which our theorem applies.
To circumvent this, we make the observation that \eqref{eqn:method_expsum} is related to the problem of counting rational lines on hypersurfaces, since by definition of the $f_i$'s,
    \begin{align*}
        \int_{0}^1\cdots \int_{0}^1\sum_{\xbf,\ybf} e(\sum_{i=0}^k \alpha_i f_i)d\alpha_0\cdots d\alpha_k = |\{(\xbf,\ybf):F(\xbf+t\ybf)=0\}|,
    \end{align*}
where $F(\xbf+t\ybf)=0$ as a polynomial in $t$. Counting linear spaces on hypersurfaces goes back to the work of Brauer \cite{Bra45} and Birch \cite{Bir57} and, more recently, for example, studied in \cite{Woo97,Par00, Par09_lines, Die10, Bra14, Bra15, BD21,Bra21, Woo24}, using various methods. The one most relevant to us (and we thank Damaris Schindler for pointing this out), for a general polynomial $F$, is that of Brandes in \cite{Bra14,Bra15}, where she applies Birch's method to the $f_i$'s each written as a symmetric, multilinear form (and adapts the method to boxes of different side-lengths). This effectively capitalizes on the relation of the $f_i$'s to the original polynomial $F$, and as Brandes remarks in \cite{Bra14}, this is considerably more efficient than the simpler, standard approach of treating the $f_i$'s as if they were genuinely distinct. In particular, this reduces the awkward condition on the singular locus of the system of polynomials $f_i$ to a condition on the more natural singular locus of $F$. Thus in the proof of Theorem \ref{thm:main}, where a priori we do not assume any structure on $F$, we apply Weyl differencing, following the work of Brandes, to bound $S_1$.

The fourth difficulty comes from a difference in perspectives, in terms of which parameters are fixed. In Birch's work (and, in general, works that apply the circle method), one typically fixes a polynomial at the start and studies its behaviour for varying primes. As a result, the coefficients of this fixed polynomial are viewed as constant. In our setting, this is reversed. We fix a modulus $p$ and hope to show nontrivial results for many polynomials, preferably uniformly over the coefficients. Thus, when we try to apply methods from the former setting, we encounter conditions that implicitly depend on the coefficients of $F$. (For example, we see this dependence in the rational approximation of the $\alpha_j$'s, say, in \cite[Lemma 2.5]{Bir62}.) To get around this, we apply ideas from Schmidt \cite{Sch84_expsums} who proved bounds on exponential sums that depend only on the number of variables and the degree (and we thank Rainer Dietmann for suggesting this reference).

Lastly, we make two remarks. First, as is standard in Weyl differencing, the savings are in the shape of the reciprocal of an exponential, and in the multi-dimensional case, this is reflected in the condition that $\smash{n-\dim  \sing F_k}$ is at least exponential in $k$. To weaken this, we can restrict $F$ to a class of diagonal polynomials, in which case the auxiliary polynomials $f_i$ are in two variables, so that fixing one variable allows us to apply the superior one-dimensional bounds produced by Vinogradov's mean value theorem. Thus in the proof of Theorem \ref{thm:main_diagonal}, we take this alternate approach to bound $S_1$.
Our second remark is that it should be possible to establish an analogue of Theorem \ref{thm:main}, with a condition that the $h$-invariant of $F$, in place of $\smash{n-\dim \sing F_k}$, is large relative to $k$. (Recently, Bailey and Lampert \cite{BaiLam24} proved that these two quantities are roughly equivalent, and we thank again Damaris Schindler for making us aware of this work.) 

\subsubsection{On bounding $S_2$}
Let us first briefly describe the shapes of $S_2$ in previous literature.
In the one-dimensional setting, we have
    \begin{align}\label{eqn:S2_classic}
        S_2 = \sum_{z\modd p} |\sum_{t\in (0,T]} \chi(t+z)|^{2r} = \sum_{\substack{t_1,...,t_{2r} \\ t_i\in (0,T]}}\sum_{z\modd p} \chi(\prod_{i=1}^r (z+t_i)\prod_{j=r+1}^{2r} (z+t_j)^{d-1}),
    \end{align}
    where $d$ is the order of $\chi$ and the second equality holds by expanding the $2r$-th moment and then swapping sums. One then splits the tuples $(t_1,...,t_{2r})$ into two types, ``good'' and ``bad'', where ``good'' are those such that the inner sum over $z$ can be bounded by the Weil bound (see e.g. \cite[Lemma 1]{Bur62a}) and ``bad'' are those such that the inner sum is bounded trivially, to yield a final bound on $S_2$ of $T^{2r}p^{1/2}+T^r p$.
    In method I for higher dimensions, 
    both the $t_i$'s and $z$ are $n$-dimensional, and so after expanding the moment, $\chi$ is evaluated at the product $\prod_{i=1}^r F(z+t_i)\prod_{j=r+1}^{2r} F(z+t_j)^{d-1}$. This is a highly singular polynomial in the variable $z$, for which there is no suitable generalization of the Weil bound giving square-root cancellation. The key innovation of \cite{PX20} sorts the $(t_1,...,t_{2r})$'s into different levels of ``bad'' and then applies a stratification result of Xu \cite{Xu20} to get a bound roughly of the form $T^{2nr}p^{n/2}+T^{nr} p^n+T^{2nr}p^{n/2}\sum_{i=1}^{n-1}T^{-i(r-1)/(n-1)}p^{i/2}$ (see \cite[Lemma 6.1]{PX20}), where in particular, the first two terms are the natural $n$-dimensional version of the one-dimensional bound and the last sum consists of the intermediate terms. In method II, $S_2$ reduces to one-dimensional complete sums of the shape \eqref{eqn:S2_classic}, but with $z$ running through a finite field $\mathbb{F}_{p^m}$ for some $1\leq m \leq n$ (see \cite[\S 3.3.2]{Chu25}), and for this, one can apply the Weil bound for finite fields (see e.g. \cite[Theorem 11.23]{IK04}) and obtain a final bound of $T^{2r}p^{n/2}+T^r p^{n}$.
    
    In our current case of \eqref{eqn:S1S2_current}, $z$ is $(k+1)$-dimensional and $\chi$ is evaluated at the product $\prod_{i=1}^r (z_0+z_1t_i+\cdots + z_kt_i^k)\prod_{j=r+1}^{2r} (z_0+z_1t_j+\cdots + z_kt_j^k)^{d-1}$. Since this is no longer a one-dimensional sum in $z$, we cannot apply the Weil bound as done in both the classic setting and method II. On the other hand, this polynomial is also not of the form treated by Pierce and Xu, where $z$ is shifted by the $t_i$'s, so we cannot utilize the stratification of Xu. To proceed, we must approach with new ideas.
    
    We shall exploit the unique structure of the product where each factor is linear in $z_0,...,z_k$ whose coefficients are increasing powers of the $t_i$'s (which together form a Vandermonde matrix). We show that when the parameter $r$ is small (relative to $k$), the inner sum over $z_0,...,z_k$ is zero for all ``good'' tuples $(t_1,...,t_{2r})$, which, roughly, are those such that the associated hyperplanes are linearly independent, and hence only the ``bad'' tuples contribute to the size of $S_2$, producing a final bound of $T^rp^{k+1}$. In particular, we do not apply any form of the Weil bound nor stratification results. Our bound is obtained at the cost of restricting $r$ to be small, but we will see that the bound on $S(F;\Nbf,\Hbf)$ is independent of $r$ (due to a cancellation from the bound on $S_1$). These features of our method are notably different from all previous techniques to bound $S_2$, which apply a square-root cancellation (or related) result to the ``good" tuples and which require $r$ to be large relative to $n$, generating a family of bounds on $S(F;\Nbf,\Hbf)$, where taking $r$ to infinity gives the threshold of $p^{\beta_n+\kappa}$ in method I and $ p^{1/4+\kappa}$ in method II.

\begin{remark}
    We note that the first step in method II of shifting by a product is also known in literature as the ``shift-by-$ab$'' trick which traces back to ideas of Vinogradov e.g. in \cite{Vin37,Vin38,Vin58} and, as mentioned, Karacuba. This technique has been extended to the estimation of other types of sums in analytic number theory; for example, this was applied in \cite{FI85_Kloosterman} for averages of Kloosterman sums, in \cite{FM98} for exponential sums over primes, and in \cite{bfkmm17,KMS17,KMS20} for bilinear forms in Kloosterman sums.
\end{remark}

\subsection{Outline of the paper}
In \S \ref{sec:burgess}, we carry out the amplification procedure and deduce Theorems \ref{thm:main} and \ref{thm:main_diagonal} conditional on the exponential sum bounds in Theorems \ref{thm:expsum_1} and \ref{thm:expsum_2}, respectively. In \S \ref{sec:expsum_1}, we prove Theorem \ref{thm:expsum_1} and in \S \ref{sec:expsum_2}, we prove Theorem \ref{thm:expsum_2}.

\subsection{Notation}
We use Vinogradov notation $A\ll B$ to denote $|A|\leq C|B|$ for some constant $C>0$ and $A\ll_\alpha B$ if the constant $C$ depends on $\alpha$. 
For a real number $\alpha$, let $e(\alpha)=e^{2\pi i \alpha}$ and let $\|\alpha\|$ denote the nearest distance to an integer. For a positive integer $q$ and $a\in \bZ/q\bZ$, let $e_q(a)=e^{2\pi i a/q}$.
For a finite set $S$, let $|S|$ denote its cardinality. 
We use boldface $\xbf\in \bR^m$ to denote the tuple $(x_1,...,x_m)$. For two integral vectors $\xbf$ and $\ybf$, we write $\xbf \equiv \ybf \modd p$ to mean $x_i\equiv y_i \modd p$ for all $i$. 
All interval notation represents discrete intervals, i.e. $[a,b]$ denotes all integers $x$ such that $a\leq x \leq b$.  For $\Nbf=(N_1,...,N_m)$ and $\Hbf=(H_1,...,H_m)$,
let $(\Nbf,\Nbf+\Hbf]$ denote the $m$-dimensional, discrete box $\prod_{i=1}^m (N_i,N_i+H_i]$, and let $\|\Hbf\| = \prod_{i=1}^m H_i$. 
For a vector $\xbf = (x_1,...,x_m)$ and a multi-index $\beta=(\beta_1,...,\beta_m)$, let $\xbf^\beta=x_1^{\beta_1}\cdots x_m^{\beta_m}$; furthermore, let $|\beta| = \beta_1+\cdots + \beta_m$ and $\beta! = \beta_1!\cdots \beta_m!$, and for a polynomial $f$ in $m$ variables, let $\partial^\beta f = \partial^{|\beta|}f/(\partial X_1^{\beta_1}\cdots \partial X_m^{\beta_m})$. Finally, we write $\dim$ to mean affine dimension over $\overline{\bF}_p$.

\section{Deduction of Theorems \ref{thm:main} and \ref{thm:main_diagonal} from bounds on exponential sums}\label{sec:burgess}

We begin by reducing the initial problem to estimating sum of characters over boxes all of whose side-lengths are comparable in size. This follows an idea from \cite[\S 3]{Kon10}, and we refer the reader to e.g. \cite[Lemma 3.1]{Chu25} for a proof.
\begin{lemma}\label{lem:comparable}
    Let $\Hbf=(H_1,...,H_n)$ and fix $H'$ such that $H'\leq H_i$ for all $i$. Then  \begin{align*}
       | S(F;\Nbf,\Hbf)| \leq 
       \|\Hbf\|
       \max_{\substack{\Nbf',\Hbf'\in \bZ^n\\ H'_i\in [H',2H']}} \|\Hbf'\|^{-1} |S(F;\Nbf',\Hbf')|.
    \end{align*}
\end{lemma}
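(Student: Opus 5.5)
The plan is to tile the box $(\Nbf,\Nbf+\Hbf]$ by sub-boxes whose side-lengths all lie in $[H',2H']$, apply the triangle inequality, and bound each piece by its volume times the maximum normalized sum.

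\emph{Step 1: one coordinate at a time.} Fix $i$. Since $H_i\geq H'$, write $H_i = m_iH' + s_i$ with $m_i=\lfloor H_i/H'\rfloor\geq 1$ and $0\leq s_i<H'$. Split $(N_i,N_i+H_i]$ into $m_i$ consecutive intervals. The first $m_i-1$ have length $H'$, and the last has length $H'+s_i\in[H',2H')$. Every piece therefore has length in $[H',2H']$, and the pieces partition the original interval.

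\emph{Step 2: take products.} Taking products of these one-dimensional partitions over $i=1,\dots,n$ partitions $(\Nbf,\Nbf+\Hbf]$ into boxes $B_j=(\Nbf^{(j)},\Nbf^{(j)}+\Hbf^{(j)}]$, each with $H^{(j)}_i\in[H',2H']$. Because the $B_j$ are disjoint and cover the original box, $\sum_j\|\Hbf^{(j)}\|=\|\Hbf\|$.

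\emph{Step 3: combine.} Since $S(F;\Nbf,\Hbf)=\sum_j S(F;\Nbf^{(j)},\Hbf^{(j)})$, the triangle inequality gives
\begin{align*}
|S(F;\Nbf,\Hbf)|\leq \sum_j \|\Hbf^{(j)}\|\cdot\|\Hbf^{(j)}\|^{-1}|S(F;\Nbf^{(j)},\Hbf^{(j)})|\leq \|\Hbf\|\max_{\substack{\Nbf',\Hbf'\in\bZ^n\\ H'_i\in[H',2H']}}\|\Hbf'\|^{-1}|S(F;\Nbf',\Hbf')|.
\end{align*}
This is the claimed bound.

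There is no real obstacle here. The only point needing care is the remainder in Step 1: it must be absorbed into the last interval rather than left as a short piece, so that every side-length stays in $[H',2H']$. The hypothesis $H'\leq H_i$, which guarantees $m_i\geq1$, is exactly what makes this possible.
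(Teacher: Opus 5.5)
Your proposal is correct and is essentially the intended argument. The paper gives no proof of its own and defers to \cite[Lemma 3.1]{Chu25}, following \cite[\S 3]{Kon10}; that proof is this same tiling of each $(N_i,N_i+H_i]$ into consecutive intervals of lengths in $[H',2H']$, followed by the product decomposition and the triangle inequality with $\sum_j\|\Hbf^{(j)}\|=\|\Hbf\|$. The only implicit point is that $H'$, like $N_i$ and $H_i$, must be an integer so that the sub-boxes satisfy $\Nbf^{(j)},\Hbf^{(j)}\in\bZ^n$ as the maximum requires, which is harmless in the application $H'=H_{\min}$.
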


Henceforth we assume that $\Hbf=(H_1,...,H_n)$ with $H\leq H_i\leq 2H$ for some $H>0$.

\subsection{The amplification method}\label{sec:amplification}
Fix $T,K\geq 1$ that we will choose later. Assume $TK=H$, and let $t\in (0,T]$ and $\ybf=(y_1,...,y_n)$ with $y_i\in (0,K]$. To carry out the first amplification steps, we adapt a technique of Friedlander and Iwaniec \cite{FI93}, who gave an alternate proof of the classical Burgess bound. In particular, they inserted a weight function that eliminates the error term from shifting, and then applied Fourier inversion. Here, we perform a multi-dimensional version of their strategy and remark that this has been applied previously, for example, in \cite{Ker14}.

For each $1\leq i \leq n$, define the weight $w_i(x_i) = \min\{x_i-N_i,1,N_i+H_i-x_i\}$ for $x_i\in (N_i,N_i+H_i]$ and $w_i(x_i)=0$ elsewhere. Define $w(\xbf) = w_1(x_1)\cdots w_n(x_n)$. Then
    \begin{align*}
            S(F;\Nbf,\Hbf) = T^{-1}K^{-n}\sum_{\xbf\in (\Nbf-\Hbf,\Nbf+\Hbf]}  \sum_{\ybf\in (0,K]^n} \sum_{t\in (0,T]} \chi(F(\xbf+t\ybf))w(\xbf+t\ybf),
    \end{align*}
since for each fixed $t,\ybf$, the box $(\Nbf-\Hbf + t\ybf, \Nbf+\Hbf + t\ybf]$ contains one copy of $(\Nbf-\Hbf,\Nbf+\Hbf]$.
By Fourier inversion,
    \begin{align*}
            S(F;\Nbf,\Hbf) = T^{-1}K^{-n}\sum_{\xbf\in (\Nbf-\Hbf,\Nbf+\Hbf]}  \sum_{\ybf\in (0,K]^n} \sum_{t\in (0,T]} \chi(F(\xbf+t\ybf))\int_{\bR^n} \hat{w}(\ubf) e(\ubf\cdot (\xbf+t\ybf)) d\ubf.
    \end{align*}
Let $v_i = u_iy_i$, so that the integral is
    \begin{align*}
         \int_{\bR^n} \hat{w}(\frac{v_1}{y_1},...,\frac{v_n}{y_n})y_1^{-1}\cdots y_n^{-1} e(\frac{v_1}{y_1}x_1+\cdots + \frac{v_n}{y_n}x_n)e(t\vbf\cdot \mathbf{1}) d\vbf,
    \end{align*}
    where $\mathbf{1}=(1,...,1)$.
 Then
       \begin{multline*}
         |   S(F;\Nbf,\Hbf) |\leq T^{-1}K^{-n}
          \int_{\bR^n} |\hat{w}(\frac{v_1}{y_1},...,\frac{v_n}{y_n})|\cdot|y_1^{-1}\cdots y_n^{-1}|\\
          \cdot \sum_{\xbf\in (\Nbf-\Hbf,\Nbf+\Hbf]}  \sum_{\ybf\in (0,K]^n}
           |\sum_{t\in (0,T]} \chi(F(\xbf+t\ybf)) e(t\vbf\cdot \mathbf{1})| d\vbf.
    \end{multline*}
By partial summation, $|\hat{w}_i(v_i/y_i)y_i^{-1}|\leq\min\{H_i,|v_i|^{-1}, K v_i^{-2}\}$, so that
upon removing the sum from the integral by inserting a supremum, we have
    \begin{multline*}
        |S(F;\Nbf,\Hbf)|\leq  T^{-1}K^{-n} 
          \int_{\bR^n}\prod_{i=1}^n \min\{H_i,|v_i|^{-1}, K v_i^{-2}\} d\vbf\\
        \cdot \sup_{\vbf\in [0,1]^n} \sum_{\xbf\in (\Nbf-\Hbf,\Nbf+\Hbf]}  \sum_{\ybf\in (0,K]^n}|\sum_{t\in (0,T]} \chi(F(\xbf+t\ybf)) e(t\vbf\cdot \mathbf{1})|.
    \end{multline*}
It is straightforward to check that the integral is bounded above by $(\log p)^n$, so that
    \begin{align*}
         |S(F;\Nbf,\Hbf)|\leq
         (\log p)^nT^{-1}K^{-n} \sum_{\xbf\in (\Nbf-\Hbf,\Nbf+\Hbf]}  \sum_{\ybf\in (0,K]^n}|\sum_{t\in (0,T]} \chi(F(\xbf+t\ybf)) e(t\vbf^\ast\cdot \mathbf{1})|,
    \end{align*}
    where $\vbf^\ast$ denotes the $\vbf\in [0,1]^n$ that achieves the supremum.
By Taylor expansion, we rewrite $F$  as a polynomial in $t$:
    \begin{align*}
        F(\xbf+t\ybf) 
        = \sum_{0\leq |\beta|\leq k} \frac{(\partial^\beta F)(\xbf)}{\beta!} (t\ybf)^\beta
        =\sum_{0\leq i \leq k} t^i 
        f_i(\xbf,\ybf),
    \end{align*}
where the coefficient of $t^i$ is
 \begin{align}\label{eqn:def_fi}
        f_i(\xbf,\ybf) = \sum_{ |\beta|=i}\frac{(\partial^\beta F)(\xbf)}{\beta!} \ybf^\beta.
    \end{align}
Then
\begin{align*}
    |S(F;\Nbf,\Hbf)| 
        &\leq  (\log p)^nT^{-1}K^{-n} \sum_{\xbf\in (\Nbf-\Hbf,\Nbf+\Hbf]}  \sum_{\ybf\in (0,K]^n}|\sum_{t\in (0,T]} \chi(\sum_{0\leq i \leq k} t^i 
        f_i(\xbf,\ybf)) e(t\vbf^\ast\cdot \mathbf{1})|.
\end{align*}
To remove dependence of the inner sum on $\xbf,\ybf$, we introduce a new variable $z_i=f_i(\xbf,\ybf)$ and define
\begin{align*}
    \eta(z_0,...,z_k)=|\{(\xbf,\ybf)\in (\Nbf-\Hbf,\Nbf+\Hbf]\times (0,K]^n: z_i\equiv f_i(\xbf,\ybf) \modd p \text{ for } 0\leq i \leq k\}|.
\end{align*}
Then
	\begin{align*}
	  |S(F;\Nbf,\Hbf)|  &\leq  (\log p)^nT^{-1}K^{-n} \sum_{\substack{z_0,...,z_k \\ \modd p}}\eta(z_0,...,z_k)|\sum_{t\in (0,T]} \chi(\sum_{0\leq i \leq k} t^i 
        z_i) e(t\vbf^\ast\cdot \mathbf{1})|.
	\end{align*}
Let $r\geq 1$. By H\"{o}lder's inequality, 
    \begin{multline*}
      | S(F;\Nbf,\Hbf)|\leq  (\log p)^n T^{-1}K^{-n}(\sum_{\substack{z_0,...,z_k\\ \modd p}}\eta(z_0,...,z_k) )^{1-1/r}(\sum_{\substack{z_0,...,z_k\\ \modd p}}\eta(z_0,...,z_k)^2 )^{1/2r}\\\cdot(\sum_{\substack{z_0,...,z_k\\ \modd p}}|\sum_{t\in (0,T]}\chi(z_0 + z_1t + \cdots + z_k t^k)e(t\vbf^\ast\cdot \mathbf{1})|^{2r})^{1/2r}.
    \end{multline*}
Denote these three sums respectively by $S_0,S_1,S_2$. We have the trivial estimate 
$ S_0 \leq 2^nK^n\|\Hbf\| $,
    so that
    \begin{align}\label{eqn:S_bound_diagonal}
       |  S(F;\Nbf,\Hbf)| \leq (\log p)^n T^{-1}K^{-n} (2^nK^n\|\Hbf\|)^{1-1/r}S_1^{1/2r}  S_2^{1/2r}.
    \end{align}
We will first bound $S_1$ conditionally, for the two classes of polynomials considered in Theorems \ref{thm:main} and \ref{thm:main_diagonal}, in parallel, and then bound $S_2$.

\subsection{Bounding $S_1$ conditionally}
By definition of $\eta(z_0,...,z_k)$, we have
\begin{align*}
    S_1=|\{(\xbf,\xbf',\ybf,\ybf')\in (\Nbf-\Hbf,\Nbf+\Hbf]^2\times (0,K]^{2n}:  f_i(\xbf,\ybf) \equiv  f_i(\xbf',\ybf') \modd p, 0\leq i \leq k\}|,
\end{align*}
where recall the definition of $f_i$ from $\eqref{eqn:def_fi}$.
 By orthogonality, we may rewrite $S_1$ in terms of exponential sums:
    \begin{align}
        S_1 
        &=p^{-(k+1)}\sum_{(\xbf,\xbf',\ybf,\ybf')\in (\Nbf-\Hbf,\Nbf+\Hbf]^2\times (0,K]^{2n}} \sum_{\substack{(a_0,...,a_k)\\ \modd p^{k+1}}}e_p(\sum_{i=0}^k a_i(f_i(\xbf,\ybf)-f_i(\xbf',\ybf'))) \nonumber \\
        &\ll_n H^{2n} K^{2n}p^{-(k+1)}+p^{-(k+1)}\sum_{\substack{(a_0,...,a_k)\not\equiv \mathbf{0}\\ \modd p^{k+1}}}|\sum_{(\xbf,\ybf)\in (\Nbf-\Hbf,\Nbf+\Hbf]\times (0,K]^n} e_p(\sum_{i=0}^k a_i f_i(\xbf,\ybf))|^2, \label{eqn:S1}
    \end{align}
where the first term in the second line is the contribution from $\abf \equiv \mathbf{0}$. So we focus on bounding the second term, where, for $\abf \not\equiv \mathbf{0}$, we denote the inner sum by $S(\abf;F)$.

\begin{thm}\label{thm:expsum_1}
 Let $n,k$ be positive integers with $n\geq k-1$. Let $F\in \bF_p[X_1,...,X_n]$ be a polynomial of degree $k$ with leading form $F_k$. 
   Let
$\Hbf=(H_1,...,H_n)$ with $H\leq H_i\leq 2H$ for some $H>0$, and suppose $p^{1/k}T^{1-1/k}< H \leq  p$, where $TK=H$ and $T,K\geq 1$.  Let 
    \begin{align}\label{eqn:cond_theta1}
       0<\theta <
       \frac{k - \log(p T^{k-1}) /  \log H}{k-1}.
    \end{align}
   Then for all $\varepsilon>0$,
        \begin{align*}
            S(\abf;F)
            \ll_{n,\varepsilon} H^nK^nH^{-(n-\dim \sing F_k)2^{-(k-1)}\theta + \varepsilon},
        \end{align*}
      where $\sing F_k$ denotes the singular locus of the hypersurface defined by $F_k=0$ over $\bF_p$. 
\end{thm}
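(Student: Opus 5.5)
We run Weyl differencing in the style of Birch and Brandes, treating the whole exponent $\Phi(\xbf,\ybf)=\sum_{i=0}^k a_i f_i(\xbf,\ybf)$ as a single polynomial. Since $f_i$ is the coefficient of $t^i$ in $F(\xbf+t\ybf)$, each $f_i$ has degree $k-i$ in $\xbf$ and $i$ in $\ybf$. The top part $a_k f_k(\xbf,\ybf)=a_kF_k(\ybf)$ involves $\ybf$ only, so we cannot difference in $\ybf$ alone without losing the natural structure.

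We will instead difference in the $\xbf$-variables $k-1$ times, keeping $\ybf$ fixed at first. Fix $\ybf$ and apply Cauchy--Schwarz to move $\ybf$ outside. The $\xbf$-degree of $\Phi$ is $k$, coming from $a_0F(\xbf)$, and the $\xbf$-degree-$(k-1)$ terms are $a_1\nabla F(\xbf)\cdot\ybf$ plus lower terms of $a_0 F$. After $k-1$ Weyl differencings with differences $\hbf^{(1)},\dots,\hbf^{(k-1)}\in(-2H,2H]^n$, the phase becomes linear in $\xbf$, with coefficient vector
\begin{align*}
a_0\,k!\,\Gamma_j(\hbf^{(1)},\dots,\hbf^{(k-1)})+(\text{lower multilinear terms}),
\end{align*}
where $\Gamma$ is the symmetric multilinear form attached to $F_k$. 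The $\ybf$-dependence enters only through $a_1$ and appears at a lower multilinear level. This matches Brandes' multilinear setup.

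A cleaner option is to difference jointly in $(\xbf,\ybf)$ along the direction $\xbf+t\ybf$. This only preserves the structure if boxes of different lengths $H$ and $K$ are handled, as Brandes does. We therefore follow the first route and sum linearly over $\xbf$, which gives
\begin{align*}
|S(\abf;F)|^{2^{k-1}}\ll K^{n2^{k-1}}H^{n(2^{k-1}-k)}\sum_{\hbf}\prod_{j=1}^n\min\bigl(H,\|\gamma_j(\hbf)/p\|^{-1}\bigr).
\end{align*}
When $a_0\equiv 0$, the $\xbf$-degree drops to $k-1$. In that case we repeat the argument with the next nonzero $a_i$, differencing $k-1-i$ times in $\xbf$ and then handling $\ybf$ through its degree-$i$ form. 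The worst case is expected to be $a_0\neq 0$ or a symmetric variant of it, and the hypothesis $n\ge k-1$ is what guarantees the counts are nondegenerate.

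Next comes the standard counting step, in the uniform form of Schmidt's \cite{Sch84_expsums} lemma. The displayed sum is bounded by $H^{n(k-1)+\varepsilon}$ times the density of small solutions. Concretely, for $P=H^\theta$ we bound the number of $\hbf$ with $|\hbf^{(j)}|\le P$ and $\|a_0\Gamma_j(\hbf)/p\|\le P^{k-1}H^{-k}$ via a multilinear shrinking lemma (Davenport's geometry-of-numbers lemma applied in each variable in turn). Either the number of such $\hbf$ exceeds roughly $H^{n(k-1)}\cdot H^{-(n-\dim\sing F_k)\theta}$ times the total, which forces many $\hbf$ with $\Gamma_j(\hbf)\equiv 0\pmod p$ exactly, or we obtain the claimed saving. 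Exact vanishing is the crucial step. Because everything lives modulo $p$ rather than over $\bR$, we need a mod-$p$ dichotomy: for $|\hbf|\le P$ with $P^{k-1}\cdot P^{k-1}H^{-k}\cdot\ldots<1$, the congruence $a_0\Gamma(\hbf)\equiv \mathbf{c}$ with $|\mathbf{c}|$ small forces a genuine small representation. Condition \eqref{eqn:cond_theta1}, namely $H^{\theta(k-1)}T^{k-1}p<H^k$, is precisely what makes the final step of the shrinking lemma land on "$\Gamma(\hbf)\equiv 0$" rather than an approximation. This requires $p^{1/k}T^{1-1/k}<H$. The $T^{k-1}$ factor arises because the $\ybf$-variables live in a box of size $K=H/T$ and rescaling costs $T$ per degree.

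Finally, the number of $\hbf\in[-P,P]^{n(k-1)}$ with $\Gamma_j(\hbf)=0$ for all $j$ is bounded by $P^{n(k-2)+\dim\sing F_k}$. This follows from the usual Birch argument: the variety $\{\Gamma(\hbf)=0\}$ has dimension at most $n(k-2)+\dim\sing F_k$, since its intersection with the diagonal $\hbf^{(1)}=\dots=\hbf^{(k-1)}$ is $\sing F_k$, and the affine dimension bound then gives a lattice-point count uniform in the coefficients. Combining these with the $\xbf$-count and $K^n$, then taking $2^{k-1}$-th roots, yields $H^nK^nH^{-(n-\dim\sing F_k)2^{-(k-1)}\theta+\varepsilon}$. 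The main obstacle is the mod-$p$ shrinking and dichotomy step: it must be uniform in the coefficients of $F$, which is where Schmidt's ideas replace Birch's rational-approximation lemma, and it must track the unequal box sizes $H$ and $K$ so that \eqref{eqn:cond_theta1} comes out exactly.
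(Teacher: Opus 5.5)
Your treatment of $a_0\not\equiv 0$ is exactly the paper's case $I=0$. The steps match:
\begin{itemize}
\item $k-1$ differencings in $\xbf$;
\item a linear phase whose coefficient is $a_0k!\,\Psi_m(\hbf_1,\dots,\hbf_{k-1})$, coming from $\Phi_k$ alone (there are no ``lower multilinear'' corrections; the $a_1$-term becomes $\xbf$-free);
\item Davenport's lemma to shrink the $\hbf_i$ to $[-H^\theta,H^\theta]^n$;
\item the observation that once the tolerance is below $p^{-1}$, the quantity $a_0k!\Psi_m(\hbf)/p$ must be an integer, since it has denominator $p$. This is all your ``mod-$p$ dichotomy'' needs to say.
\item Schmidt's point count together with the diagonal trick.
\end{itemize}

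But this is the \emph{easiest} case, not the worst. Every box in it has length $\asymp H$, so it only needs $pH^{(k-1)\theta}<H^k$, and no power of $T$ appears. The factor $T^{k-1}$ in \eqref{eqn:cond_theta1}, which you attribute to rescaling $\ybf$, is not produced by any step you carry out. The remaining $\abf$ are covered only by the sentence ``differencing $k-1-i$ times in $\xbf$ and then handling $\ybf$ through its degree-$i$ form''. Taken literally, with $\ybf$ held fixed as in your first route, this fails:
\begin{itemize}
\item After $k-1-i$ differences the phase is $a_iC\,\Phi_k(\xbf,\hbf_1,\dots,\hbf_{k-1-i},\ybf,\dots,\ybf)$ plus $\xbf$-free terms, with $\ybf$ repeated $i$ times. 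This is not multilinear in independent variables.
\item Shrinking only the $k-1-i$ genuine $\hbf$'s leaves a tolerance $H^{-(k-i)+(k-1-i)\theta}$. For $i=k-1$ this is $H^{-1}\ge p^{-1}$, so exact vanishing mod $p$ is never forced.
\end{itemize}

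What is missing is the paper's mixed differencing, which is precisely the unequal-box route you set aside. For any $I\le k-1$ with $a_I\not\equiv 0$, one differences $k-1-I$ times in $\xbf$ and $I$ times in $\ybf$ (Lemma \ref{lem:weyl_differencing}, keeping the other variable outside each Cauchy--Schwarz). The bidegree bookkeeping of Lemma \ref{lem:delta_explicit} then shows the phase becomes three pieces:
\begin{itemize}
\item $\alpha_IC(k)\sum_m x_m\Psi_m(\hbf_1,\dots,\hbf_{k-1})$, with the \emph{same} $\Psi_m$ for every $I$, so the singular-locus condition is always about $F_k$;
\item a term linear in $\ybf$ alone, from $\alpha_{I+1}$, which is discarded by summing $\ybf$ trivially;
\item terms independent of $(\xbf,\ybf)$.
\end{itemize}
Now $I$ of the $\hbf_i$ range over $[-K,K]^n$ while the linear sum still has length $\asymp H$. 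So the shrinking must be run with unequal boxes (Proposition \ref{prop:weyl3}). The final tolerance is $H^{-k+(k-1)\theta}\lambda_2^{I}$ with $\lambda_2\asymp T$, and requiring it to be $<p^{-1}$ for $I=k-1$ is \eqref{eqn:cond_theta1} up to constants. That is where $T^{k-1}$ comes from.

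Two smaller remarks. First, $n\ge k-1$ is used only so that Lemma \ref{lem:schmidt} applies with $l=n\ge\deg\Psi_m$; it is not a nondegeneracy condition.

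Second, your ``symmetric variant'' $\abf=(0,\dots,0,a_k)$ is not covered by this scheme either, and here the stated bound is false, so this is a problem with the statement itself, not just with your sketch. With $I=k$ the surviving linear phase is in $\ybf$, not $\xbf$, so Proposition \ref{lem:weyl2} does not apply. In fact $S(\abf;F)$ equals the number of $\xbf$ times $\sum_{\ybf\in(0,K]^n}e_p(a_kF_k(\ybf))$. For $F_k=\sum_iX_i^k$ and $a_k=1$ this has no cancellation once $nK^k<p/8$. That regime is allowed by the hypotheses: take $k=3$, $T=p^{1/10}$, $H=p^{5/12}$, so $K^3=p^{19/20}$, while any $\theta<3/50$ is permitted. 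Differencing and summing entirely in $\ybf$ handles this case only under the stronger condition $pT^kH^{(k-1)\theta}<H^k$, so it must be treated separately.
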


The restriction $H> p^{1/k}T^{1-1/k}$ ensures that the upper bound in \eqref{eqn:cond_theta1} is positive; on the other hand, for $H$ around the size of $p$, the upper bound is $1-(\log T/\log p)$.
We prove Theorem \ref{thm:expsum_1} in \S \ref{sec:expsum_1}.
Assuming its truth for now, plug the bound into \eqref{eqn:S1} to get
    \begin{align*}
        S_1 
        \ll_{n,\varepsilon} H^{2n+\varepsilon}K^{2n}p^{-(k+1)} +H^{2n+\varepsilon}K^{2n} H^{-(n-\dim \sing F_k)2^{-(k-2)}\theta},
    \end{align*}
where the first term dominates as long as $  H\gg p^{\frac{2^{k-2}(k+1)}{\theta(n-\dim \sing F_k)}}.$ 
Rearranging gives
    \begin{align*}
        \theta \geq \frac{2^{k-2}(k+1)\log p}{(n-\dim \sing F_k)\log H},
    \end{align*}
which is compatible with \eqref{eqn:cond_theta1} as long as
    \begin{align}\label{eqn:cond_dimsing}
        n-\dim \sing F_k \geq 2^{k-2}(k^2-1) \frac{\log p}{\log (H^kp^{-1}T^{1-k})}.
    \end{align}
    Note that for any $c>0$, $\log p /\log (H^kp^{-1}T^{1-k}) \leq c$ if and only if 
        \begin{align}\label{eqn:cond_H_c}
            H\geq p^{(1/k)(1 + 1/c)}T^{1-1/k},
        \end{align}
        which covers the range $H\geq p^{1/k}T^{1-1/k}$ in the statement of Theorem \ref{thm:expsum_1}.
        Hence, given \eqref{eqn:cond_H_c}, \eqref{eqn:cond_dimsing} is satisfied as long as $ n-\dim \sing F_k \geq 2^{k-2}(k+1)(k-1)c $. For $c>1/2^{k-2}(k-1)$, this condition subsumes the condition $n\geq k-1$ from Theorem \ref{thm:expsum_1}. 
We summarize this as the following.
\begin{cor}\label{cor:expsum_1}
     Assume the hypotheses of Theorem \ref{thm:expsum_1}. 
     Fix $c>0$.
     Let $ n-\dim\sing F_k \geq 2^{k-2}(k^2-1)c$ and $H \geq p^{(1/k)(1+1/c)}T^{1-1/k}$. Then
 for all $\varepsilon>0$, $S_1\ll_{n,\varepsilon} H^{2n}K^{2n}p^{-(k+1)+\varepsilon}$.
\end{cor}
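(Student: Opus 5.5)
The corollary follows from Theorem \ref{thm:expsum_1} combined with the orthogonality expansion \eqref{eqn:S1}; the only real work is choosing $\theta$ admissibly. Write $D = n-\dim\sing F_k$. Since $H \geq p^{(1/k)(1+1/c)}T^{1-1/k} > p^{1/k}T^{1-1/k}$, the hypotheses of Theorem \ref{thm:expsum_1} hold. Here I read ``assume the hypotheses of Theorem \ref{thm:expsum_1}'' as including $n\geq k-1$; this is automatic once $c>1/2^{k-2}(k-1)$, as remarked above. I would then fix a $\theta$ satisfying \eqref{eqn:cond_theta1} that is as large as possible and check that the exponential-sum contribution is dominated by the main term.

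First, rewrite the upper bound in \eqref{eqn:cond_theta1} as
\begin{align*}
\theta_0 := \frac{k\log H - \log p - (k-1)\log T}{(k-1)\log H} = \frac{\log(H^kp^{-1}T^{1-k})}{(k-1)\log H}.
\end{align*}
The hypothesis $H\geq p^{(1/k)(1+1/c)}T^{1-1/k}$ is equivalent to $H^kp^{-1}T^{1-k}\geq p^{1/c}$, i.e.\ $\log p/\log(H^kp^{-1}T^{1-k})\leq c$. This is exactly the equivalence \eqref{eqn:cond_H_c}. Hence $\theta_0 \geq \log p/(c(k-1)\log H)$.

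Next, insert the bound of Theorem \ref{thm:expsum_1}, squared, into \eqref{eqn:S1}. The sum over the at most $p^{k+1}$ nonzero $\abf$ cancels the factor $p^{-(k+1)}$, giving
\begin{align*}
S_1 \ll_{n,\varepsilon} H^{2n}K^{2n}p^{-(k+1)} + H^{2n}K^{2n}H^{-D2^{-(k-2)}\theta+\varepsilon}.
\end{align*}
It then suffices to have $H^{-D2^{-(k-2)}\theta}\leq p^{-(k+1)}$, i.e.\ $\theta\geq 2^{k-2}(k+1)\log p/(D\log H)$. Using $D\geq 2^{k-2}(k^2-1)c$, the right-hand side is at most $\log p/((k-1)c\log H)\leq\theta_0$.

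The one delicate point, and the main obstacle, is that \eqref{eqn:cond_theta1} is a strict inequality, so $\theta=\theta_0$ itself is not allowed. I would take $\theta=\theta_0-\delta$ with $\delta>0$ small in terms of $\varepsilon$. This costs a factor $H^{D2^{-(k-2)}\delta}\leq p^{n\delta}$, which is absorbed into $p^{\varepsilon}$ once $\delta=\varepsilon/n$. The term $H^{\varepsilon}$ is likewise at most $p^{\varepsilon}$, since $H\leq p$. Rescaling $\varepsilon$ then yields $S_1\ll_{n,\varepsilon}H^{2n}K^{2n}p^{-(k+1)+\varepsilon}$. Because $\theta$ depends on $H,T,p$, I would also need the implied constant in Theorem \ref{thm:expsum_1} to be uniform in $\theta$; as stated it depends only on $n,\varepsilon$, so this is fine.
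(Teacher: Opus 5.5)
Your proposal is correct and follows essentially the same route as the paper: you insert the squared bound of Theorem \ref{thm:expsum_1} into \eqref{eqn:S1}, require $\theta \geq 2^{k-2}(k+1)\log p/((n-\dim\sing F_k)\log H)$, and check compatibility with \eqref{eqn:cond_theta1} using the equivalence \eqref{eqn:cond_H_c} and the hypothesis on $n-\dim\sing F_k$. Your extra step of taking $\theta=\theta_0-\delta$ respects the strict inequality in \eqref{eqn:cond_theta1}, which the paper glosses over by calling the conditions ``compatible''; it is a harmless refinement whose cost is absorbed into $p^{\varepsilon}$.
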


Now let us estimate $S_1$ when $F$ takes on the special shape of $F=G_1(X_1)+\cdots + G_{n}(X_{n})$. 
Since $F$ is diagonal, we can further split $S(\abf;F)$ into two-dimensional sums. For each of these, we fix one of the variables and apply a one-dimensional exponential sum bound (from Vinogradov's mean value theorem) to the remaining variable. Since the degree changes as $\abf$ varies, we obtain different bounds on $S(\abf;F)$ for different $\abf$. So, in contrast to a uniform pointwise bound in Theorem \ref{thm:expsum_1}, here we give a bound on the average of  $S(\abf;F)$.

\begin{thm}\label{thm:expsum_2}
Let $n\geq 1$ and $k\geq 5$. Let $F=G_1(X_1)+\cdots + G_{n}(X_{n})$, where each $G_j$ has degree $k$.  Let
$\Hbf=(H_1,...,H_n)$ with $H\leq H_i\leq 2H$ for some $H>0$, and suppose $p^{2/(k-2)}\ll K \leq H < p$.
Then for all $\varepsilon	>0$,
    \begin{align*}
        p^{-(k+1)}\sum_{\substack{(a_0,...,a_{k})\not\equiv \mathbf{0}\\ \modd p^{k+1}}}|S(\abf;F)|^2 \ll_{n,k,\varepsilon} H^{2n}
        K^{2n(1- \sigma(k))+\varepsilon},
    \end{align*}
where $\sigma(k) = 1/k(k-1)$.
\end{thm}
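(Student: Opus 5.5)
The plan is to exploit the diagonal structure to factor $S(\abf;F)$ into one-dimensional pieces. Each piece is a Weyl sum in one variable, which we estimate pointwise through the bound coming from Vinogradov's mean value theorem. For $F=G_1(X_1)+\cdots+G_n(X_n)$ we have $\partial^\beta F=0$ unless $\beta$ is supported on one coordinate, so
\begin{align*}
f_i(\xbf,\ybf)=\sum_{j=1}^n \frac{G_j^{(i)}(x_j)}{i!}y_j^i \quad (i\geq 1), \qquad f_0(\xbf,\ybf)=\sum_{j=1}^n G_j(x_j).
\end{align*}
Hence $S(\abf;F)=\prod_{j=1}^n S_j(\abf)$, where
\begin{align*}
S_j(\abf)=\sum_{x\in(N_j-H_j,N_j+H_j]}\sum_{y\in(0,K]} e_p\Big(\sum_{i=0}^k a_i \frac{G_j^{(i)}(x)}{i!}y^i\Big).
\end{align*}
It then suffices to prove a pointwise bound $|S_j(\abf)|\ll_{k,\varepsilon} HK^{1-\sigma(k)+\varepsilon}$, uniformly in $j$ and in $\abf\not\equiv\mathbf{0}$. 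Squaring the product and averaging over the at most $p^{k+1}$ vectors $\abf$ costs nothing, because of the normalisation $p^{-(k+1)}$, and this gives the stated bound $H^{2n}K^{2n(1-\sigma(k))+\varepsilon}$.

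Fix $\abf\not\equiv\mathbf{0}$. Let $d$ be the largest index and $e$ the smallest index with $a_i\not\equiv 0$, so that $e\leq d$. The phase $\Phi(x,y)=\sum_i a_i G_j^{(i)}(x)y^i/i!$ has $y$-degree $d$, with coefficient of $y^d$ equal to $a_dG_j^{(d)}(x)/d!$; this is a polynomial of degree $k-d$ in $x$ with nonzero leading coefficient. Writing $g$ for the leading coefficient of $G_j$, the monomial $x^{k-e}$ arises only from the $i=e$ term, with coefficient $a_e g\binom{k}{e}y^e$. For $p>k$ this is nonzero whenever $y\not\equiv 0$, and $y\in(0,K]$ with $K<p$ guarantees that. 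Since $e\leq d$, at least one of $d\geq k/2$ or $k-e\geq k/2$ holds, and we treat these two cases separately.

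If $d\geq k/2$, fix $x$ and sum over $y$. We use the bound recorded in \cite{Bou16}, which follows from the resolution of the main conjecture:
\begin{align*}
\sum_{y\leq K} e(\alpha y^d+\cdots)\ll K^{1+\varepsilon}(q^{-1}+K^{-1}+qK^{-d})^{\sigma(d)}
\end{align*}
whenever $|\alpha-b/q|\leq q^{-2}$. We apply it with $\alpha=a_dG_j^{(d)}(x)/(d!\,p)$ and $q=p$, which is legitimate whenever $G_j^{(d)}(x)\not\equiv 0\pmod p$. We have $d-1\geq (k-2)/2$, and together with $K\gg p^{2/(k-2)}$ this gives $pK^{-d}\ll K^{-1}$. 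Also $p^{-1}<K^{-1}$, so the $y$-sum is $\ll K^{1-\sigma(d)+\varepsilon}\leq K^{1-\sigma(k)+\varepsilon}$. Here $d\geq k/2\geq 5/2$, so $d\geq 3$ and the Vinogradov bound applies. The at most $k-d$ values of $x$ modulo $p$ with $G_j^{(d)}(x)\equiv 0$ are bounded trivially; each residue class meets the $x$-range $O(1)$ times since $H<p$, so they contribute $O_k(K)$, which is $\ll HK^{1-\sigma(k)}$ because $H\geq K$. Summing over $x$ gives $|S_j(\abf)|\ll HK^{1-\sigma(k)+\varepsilon}$.

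If instead $k-e\geq k/2$, fix $y\in(0,K]$ and sum over $x$. This is a Weyl sum of degree $k-e$ over an interval of length $\asymp H$, with leading coefficient $a_e g\binom{k}{e}y^e/p$, and again we take $q=p$. Now $k-e-1\geq (k-2)/2$ and $H\geq K\gg p^{2/(k-2)}$ give $pH^{-(k-e)}\ll H^{-1}$. The $x$-sum is therefore $\ll H^{1-\sigma(k-e)+\varepsilon}\leq HK^{-\sigma(k)+\varepsilon}$, and summing over $y$ gives the same bound for $|S_j(\abf)|$.

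The main obstacle is the choice of which variable to sum over, and checking that the threshold $K\gg p^{2/(k-2)}$ makes the case split exhaustive. A pointwise bound summing over $y$ alone fails when only low powers of $y$ appear: for instance, when $d$ is small the term $qK^{-d}=pK^{-d}$ is large. The observation that the $x$-degree is $k-e$, with a coefficient that never vanishes, resolves this, because $e\leq d$ forces at least one of the two degrees to be at least $k/2$. One also has to confirm that the minor-arc hypothesis with $q=p$ is exactly the exact-rational situation here, and that the exceptional $x$ are harmless. The hypothesis $k\geq 5$ ensures that the relevant degree is at least $3$, so the Vinogradov-type bound is available in both cases.
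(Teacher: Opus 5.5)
Your proposal is correct and follows essentially the same route as the paper. Both factor $S(\abf;F)$ using the diagonal structure. Your smallest nonvanishing index $e$ plays the role of the paper's sets $A_\kappa$ with $\kappa\le\lfloor k/2\rfloor$: fix $y$ and apply the Vinogradov-type Weyl bound with $q=p$ in $x$. Your largest index $d$ plays the role of the sets $A'_\kappa$: fix $x$, discard the $O_k(1)$ residues where the leading coefficient vanishes, and apply the bound in $y$. Presenting this as one uniform pointwise bound on each factor is only a cleaner packaging of the same case split, and your $O(1)$-per-residue-class count for the exceptional $x$ is slightly more careful than the paper's $kK$.
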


We prove this in \S \ref{sec:expsum_2}. Assuming its truth for now, plug the bound into \eqref{eqn:S1} to get
    \begin{align*}
        S_1\ll_{n,k,\varepsilon} H^{2n}K^{2n}p^{-(k+1)} + H^{2n}K^{2n-2n\sigma(k)+\varepsilon},
    \end{align*}
    where
the first term dominates when $K\gg p^{(k+1)/2n \sigma(k)}$. This is compatible with the condition $K\gg p^{2/(k-2)}$ as long as $n\sigma(k)\geq (k-2)(k+1)/4$.
We summarize this as the following.

\begin{cor}\label{cor:expsum_2}
Assume the hypotheses of Theorem \ref{thm:expsum_2}. Suppose $n\geq (k+1)k(k-1)(k-2)/4$, and let $p^{2/(k-2)}\ll K \leq H < p$. Then for all $\varepsilon	>0$, $S_1\ll_{n,k,\varepsilon} H^{2n}K^{2n}p^{-(k+1)+\varepsilon}$.
\end{cor}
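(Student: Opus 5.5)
The statement to prove is Corollary \ref{cor:expsum_2}, and it follows by inserting Theorem \ref{thm:expsum_2} into \eqref{eqn:S1} and checking that the main term dominates. Under the hypotheses of Theorem \ref{thm:expsum_2}, \eqref{eqn:S1} becomes
\begin{align*}
S_1 \ll_{n,k,\varepsilon} H^{2n}K^{2n}p^{-(k+1)} + H^{2n}K^{2n-2n\sigma(k)+\varepsilon}, \qquad \sigma(k)=\frac{1}{k(k-1)}.
\end{align*}
The task is therefore to show $K^{-2n\sigma(k)+\varepsilon}\ll p^{-(k+1)+\varepsilon'}$. Equivalently, we need $K^{2n\sigma(k)}\gg p^{k+1}$, up to a harmless $p^{\varepsilon}$ loss, since $K\le p$ turns $K^{\varepsilon}$ into $p^{\varepsilon}$.

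The key step is to use only the lower bound $K\gg p^{2/(k-2)}$. It gives
\begin{align*}
K^{2n\sigma(k)} \gg p^{4n\sigma(k)/(k-2)} = p^{\frac{4n}{k(k-1)(k-2)}}.
\end{align*}
The hypothesis $n\ge (k+1)k(k-1)(k-2)/4$ is exactly the condition that this exponent is at least $k+1$. Hence the second term is at most $H^{2n}K^{2n}p^{-(k+1)}K^{\varepsilon}\le H^{2n}K^{2n}p^{-(k+1)+\varepsilon}$. The implied constant from $K\gg p^{2/(k-2)}$ is raised to a power depending only on $n,k$, so it is absorbed into the $\ll_{n,k,\varepsilon}$ constant. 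Both terms are then bounded by $H^{2n}K^{2n}p^{-(k+1)+\varepsilon}$, which gives the corollary after renaming $\varepsilon$.

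There is no real obstacle here: the substantive input is Theorem \ref{thm:expsum_2}, which is assumed. The only points needing care are that the exponent $\varepsilon$ on $K$ is converted correctly to a power of $p$ (using $K\le H<p$), and that the constant implicit in ``$\ll$'' in the hypothesis on $K$ depends only on $n$ and $k$.
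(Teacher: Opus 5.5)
Your proposal is correct and follows the paper's argument: substitute Theorem \ref{thm:expsum_2} into \eqref{eqn:S1}, and observe that $K\gg p^{2/(k-2)}$ gives $K^{2n\sigma(k)}\gg p^{k+1}$ exactly when $n\sigma(k)\geq (k-2)(k+1)/4$, i.e.\ $n\geq (k+1)k(k-1)(k-2)/4$, so the main term dominates. Your remarks on turning $K^{\varepsilon}$ into $p^{\varepsilon}$ via $K<p$ and on absorbing the implied constant fill in details the paper leaves implicit.
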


This completes the two estimates of $S_1$, conditional on Theorems \ref{thm:expsum_1} and \ref{thm:expsum_2}.

\subsection{Bounding $S_2$}
Expanding the definition of $S_2$, we get
    \begin{align}
        S_2       
    &=  \sum_{\substack{z_0,...,z_k \\ z_i \modd p}} |\sum_{t\in (0,T]} \chi(z_0+z_1t+\cdots +z_kt^k) e(t\vbf^\ast \cdot \mathbf{1})|^{2r} \nonumber \\
    &=\sum_{\substack{t_1,...,t_{2r} \\ t_i\in (0,T]}}
        e((\sum_{i=1}^r t_i - \sum_{j=r+1}^{2r}t_j)\vbf^\ast \cdot \mathbf{1})
        \sum_{\substack{z_0,...,z_k\\ z_i \modd p}} \prod_{i=1}^{r}\chi(L(t_i;\zbf)) \prod_{j=r+1}^{2r}\overline{\chi}(L(t_j;\zbf))\nonumber \\
        &\leq \sum_{\substack{t_1,...,t_{2r} \\ t_i\in (0,T]}}
       |
        \sum_{\substack{z_0,...,z_k\\ z_i \modd p}} \chi(\prod_{i=1}^{r} L(t_i;\zbf) \prod_{j=r+1}^{2r} L(t_j;\zbf)^{d-1})|, \label{eqn:S2}
    \end{align}
where $d$ is the order of $\chi$ and $L(t;\zbf) = z_0 + z_1t + \cdots + z_k t^k$ is linear in the $z_i$'s.
We show the following bound.

\begin{prop}\label{prop:S2} Let $r\leq (k+1)/2$ be an integer. Then 
    $S_2\ll_r T^r p^{k+1}$.
\end{prop}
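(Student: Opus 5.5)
Starting from \eqref{eqn:S2}, we sort the $2r$-tuples $(t_1,\dots,t_{2r})\in(0,T]^{2r}$ by the set of distinct values among the $t_i$. Since $T\leq H<p$, distinct integers in $(0,T]$ stay distinct modulo $p$.

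Call a tuple \emph{good} if some value $t^\ast$ occurs among the $t_i$ exactly once. Call it \emph{bad} otherwise, that is, when every value that occurs appears at least twice.

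\textbf{Bad tuples.} Here there are at most $r$ distinct values. Choosing these values and their pattern gives $O_r(T^r)$ tuples. For each one we bound the inner sum over $\zbf\in\bF_p^{k+1}$ trivially by $p^{k+1}$. The total contribution is $O_r(T^rp^{k+1})$, which is exactly the claimed bound.

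\textbf{Good tuples.} We show that for a good tuple the inner sum vanishes. Let $s_1,\dots,s_m$ be the distinct values, with $s_1=t^\ast$ appearing once, so that $m\leq 2r\leq k+1$.

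The linear forms $L(s_j;\zbf)=\sum_i z_is_j^i$ have coefficient vectors $(1,s_j,\dots,s_j^k)$. These are rows of a Vandermonde matrix with $m\leq k+1$ distinct nodes mod $p$, so they are linearly independent over $\bF_p$. We extend them to a basis of linear forms on $\bF_p^{k+1}$ and change variables $w_j=L(s_j;\zbf)$; this is an invertible linear map. The inner sum then factors as
\[
p^{k+1-m}\prod_{j=1}^m\sum_{w\in\bF_p}\chi(w)^{e_j},
\]
where $e_j$ is the net exponent of $\chi$ on the $j$-th value: each occurrence in the first half contributes $1$ and each occurrence in the second half contributes $d-1\equiv -1$ modulo $d$.

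For $j=1$ the value appears exactly once, so $e_1=\pm1\not\equiv 0\pmod d$. Since $\chi$ is nonprincipal, $\sum_w\chi(w)^{\pm1}=0$. Hence the whole product, and so the inner sum, is $0$.

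Combining the two cases gives $S_2\ll_r T^rp^{k+1}$.

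The only real point is the need for $m\leq k+1$, which is what makes the Vandermonde forms independent. This is guaranteed by the hypothesis $2r\leq k+1$. The rest is bookkeeping. One should also check the convention $\chi(0)=0$ in the change of variables; it is harmless because the factorization is exact over all of $\bF_p^{k+1}$.
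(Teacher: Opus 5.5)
Your proposal is correct and follows essentially the same argument as the paper: you split the tuples according to whether some value occurs exactly once, count the bad tuples as $O_r(T^r)$ and bound them trivially by $p^{k+1}$, and for good tuples you use the Vandermonde rank (which needs $m\le 2r\le k+1$) to change variables, so that the complete sum attached to the value occurring exactly once vanishes. Your bookkeeping of the net exponents $e_j$ is, if anything, slightly cleaner than the paper's ``without loss of generality'' claim that no grouped exponent is divisible by $d$, since only the factor for the unique value is needed to make the sum vanish.
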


To prove this, note that each factor in the argument of $\chi$ defines a hyperplane with coefficients $1,t_i,...,t_i^k$. If these hyperplanes (say, after grouping the same ones together and reducing the exponents mod $d$) are linearly independent, then given $2r\leq k+1$, there exists a $(k+1)\times (k+1)$ nonsingular linear change of variables over $\bF_p$ under which we can rewrite the $(k+1)$-dimensional complete sum as one whose arguments are single-variable monomials in the new variables. Then the sum vanishes by orthogonality. This motivates us to split the $(t_1,...,t_{2r})$'s into ``good'' and ``bad'', where ``good'' are those such that there is at least one linearly independent hyperplane. 

To determine whether the hyperplanes are linearly independent, we capitalize on the shape of the coefficients, which are increasing powers of $t_i$'s. This conveniently forms a Vandermonde matrix, for which we have the following standard fact.

\begin{lemma}\label{lem:vandermonde}
 Let $m,n$ be positive integers and  consider the Vandermonde matrix
    \begin{align*}
        \begin{bmatrix}
            1&a_1&a_1^2&\cdots&a_1^n\\
            \vdots &&&&\vdots\\
            1&a_m&a_m^2&\cdots&a_m^n
        \end{bmatrix}.
    \end{align*}
    If $m\leq n+1$, then this has rank $m$ if and only if all $a_i$'s are distinct. If $m>n+1$, then this has rank $n+1$ if and only if $n+1$ of the $a_i$'s are distinct.
\end{lemma}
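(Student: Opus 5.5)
We prove Lemma \ref{lem:vandermonde} by reducing to square Vandermonde determinants. The key input is the classical formula: for elements $b_1,\dots,b_s$ of a field, the $s\times s$ matrix with rows $(1,b_i,\dots,b_i^{s-1})$ has determinant $\prod_{1\le i<j\le s}(b_j-b_i)$. We will either quote this or prove it by induction on $s$. The inductive step subtracts $b_1$ times each column from the next, factors $(b_j-b_1)$ out of row $j$, and recognizes the resulting $(s-1)\times(s-1)$ Vandermonde determinant. In particular, the determinant is nonzero if and only if the $b_i$ are pairwise distinct.

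\emph{Case $m\le n+1$.} First suppose the $a_i$ are distinct. The $m\times m$ minor formed by the first $m$ columns is the square Vandermonde matrix in $a_1,\dots,a_m$. Its determinant is nonzero, so the rank is $m$. Conversely, if $a_i=a_j$ for some $i\ne j$, then two rows coincide, so the rank is strictly less than $m$.

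\emph{Case $m>n+1$.} The rank is at most $n+1$, since there are only $n+1$ columns. The rank equals the maximal number of linearly independent rows.
\begin{itemize}
\item If at least $n+1$ of the $a_i$ are distinct, choose $n+1$ rows with distinct $a_i$. These form a square Vandermonde matrix with nonzero determinant, so the rank is $n+1$.
\item If fewer than $n+1$ of the $a_i$ are distinct, say $s\le n$ distinct values, then the matrix has only $s$ distinct rows, since equal $a_i$ give equal rows. Hence the rank is at most $s<n+1$.
\end{itemize}
This proves both directions.

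Nothing here is difficult. The only point needing care is the implicit field: for the application the entries lie in $\bF_p$, so ``distinct'' must mean distinct modulo $p$, and the determinant formula holds over any field. The statement ``$n+1$ of the $a_i$'s are distinct'' should be read as ``at least $n+1$ pairwise distinct values occur among the $a_i$''.
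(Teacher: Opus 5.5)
Your proof is correct: the nonzero square Vandermonde minor (on distinct $a_i$) gives the ``if'' directions, and the observation that equal $a_i$ produce equal rows gives the ``only if'' directions in both cases. The paper gives no proof of this lemma and simply quotes it as a standard fact; your argument is the standard one it relies on, and your reading of distinctness as distinctness in $\bF_p$ is the one used in the proof of Proposition \ref{prop:S2}.
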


\begin{proof}[Proof of Proposition \ref{prop:S2}]
For $\tbf \in (0,T]^{2r}$, let $S(\tbf)$ denote the inner $(k+1)$-dimensional sum in \eqref{eqn:S2}. Let $\mathrm{Bad}(T)$ denote the set of $\tbf=(t_1,...,t_{2r})$ such that for every $t_i$, there exists $j\neq i$ for which $L(t_i;\Zbf)=L(t_j;\Zbf)$ as polynomials in $\Zbf$. Note that $L(t_i;\Zbf)=L(t_j;\Zbf)$ if and only if $t_i=t_j$. Let $\mathrm{Good}(T)$ denote the complement of $\mathrm{Bad}(T)$. Then
    \begin{align}\label{eqn:S2_goodbad}
        S_2 \leq \sum_{\tbf \in \mathrm{Bad}(T)} |S(\tbf)| + \sum_{\tbf \in \mathrm{Good}(T)} |S(\tbf)|.
    \end{align}
We claim that $|\mathrm{Bad}(T)|\ll_r T^r$. This argument is standard, and we include it for completion. Let $\tbf\in \mathrm{Bad}(T)$ and let $m$ denote the number of distinct $t_i$'s in $\tbf$, so that $1\leq m \leq r$ by definition of $\mathrm{Bad}(T)$. With $m$ fixed, let $x_1,...,x_m$ denote the distinct values.
For each $1\leq j \leq m$, let $i_j$ denote the smallest index $i$ such that $t_i=x_j$; suppose without loss of generality that $i_1<\cdots < i_m$.
Note that $i_1=1$ and $i_m\leq 2r-1$. The number of choices of these positions $i_1,...,i_m$ is $\binom{2r-2}{m-1} \leq 2^{2r-2}$. 
With a position $i_1,...,i_m$ fixed, there are $T^m$ choices for $t_{i_1},...,t_{i_m}$, while the remaining $2r-m$ positions can take on any of these $m$ values, for which there are $m^{2r-m}$ choices. Hence $|\mathrm{Bad}(T)|$ is bounded above by $ \sum_{m=1}^r 2^{2r-2}T^m m^{2r-m} \leq 2^{2r-2}T^r r^{2r} r \ll_r T^r$, which proves the claim.
For $\tbf\in \mathrm{Bad}(T)$, we bound $S(\tbf)$ trivially by $p^{k+1}$.

For each $\tbf\in \mathrm{Good}(T)$, we claim that $S(\tbf)=0$. Fix $\tbf \in \mathrm{Good}(T)$, so that by definition, there exists $t_i$ such that $t_i\neq t_j$ for all $j\neq i$. Suppose without loss of generality that this is $t_1$. By grouping the same $t_j$'s together, rewrite $S(\tbf)$ as
    \begin{align*}
    S(\tbf)=     \sum_{\substack{z_0,...,z_k\\ z_i \modd p}} \chi(L(t_1;\zbf)L(t_2;\zbf)^{\alpha_2}\cdots L(t_m;\zbf)^{\alpha_m}),
    \end{align*}
where the $t_i$'s are distinct, $1\leq m\leq 2r \leq k+1$ by assumption, and without loss of generality, $\alpha_i\geq 1$ are integers not divisible by $d$. By Lemma \ref{lem:vandermonde}, the $m\times (k+1)$ matrix of coefficients 
    \begin{align*}
        \begin{bmatrix}
             1&t_1&t_1^2&\cdots&t_1^k\\
            \vdots &&&&\vdots\\
            1&t_m&t_m^2&\cdots&t_m^k
        \end{bmatrix}
    \end{align*}
has rank $m$, since $t_1,...,t_m$ are all distinct. Extend this to a nonsingular $(k+1)\times (k+1)$ matrix $A$, and consider the change of variables $\wbf = A\zbf$. Then
 \begin{align*}
    S(\tbf)=     \sum_{\substack{w_1,...,w_{k+1}\\ w_i \modd p}} \chi(w_1)\chi(w_2^{\alpha_2})\cdots \chi(w_m^{\alpha_m})
    =\sum_{\substack{w_2,...,w_{k+1}\\ w_i \modd p}}\chi(w_2^{\alpha_2})\cdots \chi(w_m^{\alpha_m}) \sum_{w_1 \modd p} \chi(w_1)=0,
    \end{align*} 
    so $\tbf\in \mathrm{Good}(T)$ does not contribute to the size of $S_2$. Thus \eqref{eqn:S2_goodbad} is $S_2\leq \sum_{\tbf \in \mathrm{Bad}(T)} |S(\tbf)| \ll_r T^rp^{k+1}$ as desired.
\end{proof}

\subsection{Deduction of Theorems \ref{thm:main} and \ref{thm:main_diagonal} conditional on Corollaries \ref{cor:expsum_1} and \ref{cor:expsum_2}} 
We combine the bound on $S_2$ and the conditional bounds on $S_1$ to deduce the main theorems. Note that Corollaries \ref{cor:expsum_1} and \ref{cor:expsum_2} give the same bound on $S_1$, with different conditions on $F,H,K,n,k$.

Let $F$ be a polynomial of degree $k$ with leading form $F_k$. Let $c>0$, and suppose $n-\dim_{\overline{\bF}_p}\sing F_k\geq 2^{k-2} (k^2-1)c$ and $H\geq p^{(1/k)(1+1/c)}T^{1-1/k}$. Then by \eqref{eqn:S_bound_diagonal}, Corollary \ref{cor:expsum_1} and Proposition \ref{prop:S2}, for all $\varepsilon>0$, we have
    \begin{align*}
    S(F;\Nbf,\Hbf)
   \ll_{n,r,\varepsilon} T^{-1}K^{-n} (H^n K^n)^{1-1/r}(H^{2n}K^{2n}p^{-(k+1)+\varepsilon})^{1/2r}  (T^rp^{k+1})^{1/2r}
   \ll_{n,k,\varepsilon} H^n T^{-1/2} p^{\varepsilon}.
        \end{align*}
Write $T=p^\tau$, so that assuming $H\gg p^{(1/k)(1+1/c)+\tau(1-1/k)}$ gives $S(F;\Nbf,\Hbf) \ll_{n,k,\varepsilon} H^np^{-\tau/2+\varepsilon}$. 
Finally, apply Lemma \ref{lem:comparable} with $H'=H_{\min}$ and choose $c=k-1$ to complete the deduction of Theorem \ref{thm:main}.

Now let $F=G_1(X_1)+\cdots + G_{n}(X_{n})$ and suppose $n>(k-2)(k-1)k(k+1)/4$ and $K\gg p^{2/(k-2)}$.
By \eqref{eqn:S_bound_diagonal}, Corollary \ref{cor:expsum_2} and Proposition \ref{prop:S2}, again we get 
    $S(F;\Nbf,\Hbf) \ll_{n,k,\varepsilon} H^{n}T^{-1/2}p^{\varepsilon}$
for all $\varepsilon>0$.
    Writing $TK=H$ and $T=p^\tau$, we have the condition $H\gg p^{2/(k-2) + \tau}$ and the bound $S(F;\Nbf,\Hbf) \ll_{n,k,\varepsilon} H^{n}p^{-\tau/2+\varepsilon}$. 
    As above, apply Lemma \ref{lem:comparable} with $H'=H_{\min}$ to complete the deduction of Theorem \ref{thm:main_diagonal}.

\section{Proof of Theorem \ref{thm:expsum_1}}
\label{sec:expsum_1}

We follow the method of Birch as adapted by Brandes in \cite{Bra14,Bra15} for linear spaces on hypersurfaces and for boxes of different size.
As mentioned in the introduction, this is relevant to our work since our exponential sum of interest \eqref{eqn:S1} is closely related to counting lines on $F=0$.
The key first step is to decompose $F$ as a sum of forms $F_j$ of different degrees and to rewrite each $F_j$ as a multilinear form, so that we can exploit the relation among the $f_i$'s (as partials of $F$). This will later substantially simplify the condition on the singular locus that appears at the end of the Weyl differencing process as treated by Birch.
There, one typically arrives at one of three possibilities (e.g. as in \cite[Lemma 2.3]{Bir62} or \cite[Lemma 3.5]{Bra15}): either the exponential sum has a nontrivial bound, or the coefficients in the sum have rational approximation with small denominator, or a certain system of equations has many solutions. Since we are ultimately concerned with rational coefficients, we deviate from the usual trichotomy and instead adapt ideas of \cite{Sch84_expsums} to give a variation where either the exponential sum is small or a certain system has many solutions mod $p$.

Let $F(\Xbf)\in \bZ[X_1,...,X_n]$ be of degree $k$.
Write $F(\Xbf) = F_k(\Xbf) + F_{k-1}(\Xbf) + \cdots + F_0(\Xbf)$, where $F_j$ is a form of degree $j$. We apply the following standard fact for homogeneous polynomials and refer readers to e.g. \cite[Chapter 3, \S 2.2]{Pro07} for a proof.

\begin{lemma} Let
$P\in \bZ[X_1,...,X_n]$ be a homogeneous polynomial of degree $d$. Then there exists a symmetric, multilinear polynomial $\Phi(\Xbf_1,...,\Xbf_d)$ such that $P(\Xbf) = \Phi(\Xbf,...,\Xbf)$.
\end{lemma}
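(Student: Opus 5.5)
The statement is the polarization lemma: a homogeneous polynomial $P$ of degree $d$ is the diagonal restriction of a symmetric multilinear form. I would prove it by explicit construction. By linearity it suffices to treat a single monomial $\Xbf^\beta$ with $|\beta|=d$. For such a monomial, define
\begin{align*}
\Phi_\beta(\Xbf_1,\dots,\Xbf_d)=\frac{1}{d!}\sum_{\pi\in S_d}\prod_{l=1}^{d} X_{\pi(l),\,j_l},
\end{align*}
where $(j_1,\dots,j_d)$ is any fixed ordering of the indices. In this ordering each index $j$ appears $\beta_j$ times, and $X_{m,j}$ denotes the $j$-th coordinate of $\Xbf_m$.

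I would then check three properties of $\Phi_\beta$ in turn. Each summand is linear in every $\Xbf_m$, so $\Phi_\beta$ is multilinear. Averaging over $S_d$ makes it symmetric. Setting $\Xbf_1=\cdots=\Xbf_d=\Xbf$ makes every summand equal to $\Xbf^\beta$, so the diagonal gives back $\Xbf^\beta$. Finally I would set $\Phi=\sum_\beta c_\beta\Phi_\beta$ for $P=\sum_\beta c_\beta\Xbf^\beta$.

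An equivalent route uses the polarization formula
\begin{align*}
\Phi(\Xbf_1,\dots,\Xbf_d)=\frac{1}{d!}\,\frac{\partial^d}{\partial s_1\cdots\partial s_d}P(s_1\Xbf_1+\cdots+s_d\Xbf_d).
\end{align*}
This form connects directly to the Taylor coefficients $f_i$ used later in the paper.

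The only real issue is the coefficient ring. The factor $1/d!$ means $\Phi$ has coefficients in $\bZ[1/d!]$, or equivalently one works with $d!\,P$. In the application everything is reduced mod $p$ with $p\gg_{n,k}1$, so $d\le k<p$ and $d!$ is invertible mod $p$. I would therefore read the statement as giving $\Phi$ over $\bQ$ with denominators dividing $d!$, which is harmless over $\bF_p$. This is the one point I would flag explicitly; beyond it no step presents an obstacle.
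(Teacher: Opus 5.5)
Your proof is correct. It is the standard polarization (symmetrization) construction; the paper gives no proof of its own and simply cites Procesi's treatment of polarization, so you are taking essentially the same route. Your caveat about coefficients is also right and worth keeping: already $P=X_1X_2$ forces the off-diagonal coefficient $1/2$, so $\Phi$ must be taken over $\bQ$ with denominators dividing $d!$, which is harmless in the paper because $p\gg_{n,k}1$ gives $p>k\geq d$.
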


Applying the lemma to each $F_j$, there exists a symmetric, multilinear form $\Phi_j(\Xbf_1,...,\Xbf_j)$ such that $F_j(\Xbf) = \Phi_j(\Xbf,...,\Xbf)$. With this identification, we apply linearity of $\Phi_j$ to expand $F_j(\Xbf + t\Ybf)$. To keep track of the number of $\Xbf$'s and $\Ybf$'s, we make the following definition: for indeterminantes $\Zbf_1,...,\Zbf_r$ and non-negative integers $z_1,...,z_r$ such that $z_1+\cdots + z_r=j$, let
    \begin{align}\label{def:tilde_phi}
        \tilde{\Phi}_{j,(z_1,...,z_r)}(\Zbf_1,...,\Zbf_r)
=\Phi_j(\Zbf_1,...,\Zbf_1,...,\Zbf_r,...,\Zbf_r),
    \end{align}
    where, on the right-hand side, each $\Zbf_i$ appears $z_i$ many times.
Then
    \begin{align*}
        F(\Xbf+t\Ybf) = \sum_{j=0}^k F_j(\Xbf+t\Ybf) = \sum_{j=0}^k \sum_{i=0}^j t^i \binom{j}{i} \Tilde{\Phi}_{j, (j-i,i)}(\Xbf,\Ybf)
        =\sum_{i=0}^k t^i \sum_{j=i}^k  \binom{j}{i} \Tilde{\Phi}_{j,(j-i,i)}(\Xbf,\Ybf).
    \end{align*}
In particular, note the property that, for $0\leq i \leq  j \leq k$,
    \begin{align}\label{eqn:property_partial_binom}
        \sum_{|\beta|=i} \frac{(\partial^\beta F_j)(\Xbf)}{\beta!}\Ybf^{\beta} = \binom{j}{i} \tilde{\Phi}_{j,(j-i,i)}(\Xbf,\Ybf).
    \end{align}
This relation allows us to write our system of forms $f_i$ as defined in \eqref{eqn:def_fi} in terms of the same multilinear form, so that we are really working with one polynomial rather than $k+1$ many. 

To keep track of variables more easily, we switch to the notation of $\Xbf_1, \Xbf_2$ in place of $\Xbf,\Ybf$.
For a real tuple $\alpha=(\alpha_0,...,a_{k})$, define
    \begin{align}\label{def:tildeF}
        \tilde{F}(\Xbf_1,\Xbf_2;\alpha) = \sum_{i=0}^k \alpha_i\sum_{j=i}^k  \binom{j}{i} \Tilde{\Phi}_{j,(j-i,i)}(\Xbf_1,\Xbf_2),
    \end{align}
    where, suppose $\alpha_I\not\in \bZ$ for some index $0 \leq I \leq k$ (though we will not apply this until Proposition \ref{prop:tripartite}),
and for fixed $\Mbf_1,\Mbf_2,\Lbf_1,\Lbf_2$, define
    \begin{align*}
        S(\alpha) = \sum_{\substack{\xbf_1, \xbf_2 \\\xbf_i\in (\Mbf_i, \Mbf_i+\Lbf_i]}}e( \tilde{F}(\xbf_1,\xbf_2;\alpha)).
    \end{align*}
Assume that for $i=1,2$, $\Lbf_i = (L_{i,1},...,L_{i,n})$ where $L_i\leq L_{i,j}\leq 2 L_i$ for all  $1\leq j \leq n$,  for some $L_i>0$ (though, again, we will not require this dyadic variation until Lemma \ref{lem:N1}.)

In \S \ref{sec:weyl}, we carry out Weyl differencing and in \S \ref{sec:expsum_large}, we examine what happens when the exponential sum is large.
Then in \S \ref{sec:pf_22}, we deduce Theorem \ref{thm:expsum_1}.

\subsection{Multi-dimensional Weyl differencing}\label{sec:weyl}
We begin with the standard initial step of introducing a differencing operator that reduces the degree of the polynomials by repeated applications of Cauchy-Schwarz. This produces an exponential sum that is linear in $\xbf_1,\xbf_2$ which we bound in Proposition \ref{lem:weyl2} in terms of the expressions $\alpha_I C(k)\Psi_m(\hbf_1,...,\hbf_{k-1})$, for $1\leq m \leq n$, where $C(k)$ is a constant and $\Psi_m$ is defined in \eqref{eqn:def_psi} as the coefficients of $x_m$ in $\tilde{\Phi}_{k,(1,1,...,1)}(\xbf,\hbf_1,...,\hbf_{k-1})$. Then we demonstrate in Proposition \ref{prop:weyl3} that either $S(\alpha)$ can be bounded nontrivially or these expressions $\alpha_I C(k)\Psi_m(\hbf_1,...,\hbf_{k-1})$, $1\leq m \leq n$, are close to integers for many choices of $\hbf_1,...,\hbf_{k-1}$. Finally, we show that the latter case implies that the dimension of the singular locus of $F_k$ is large, culminating in Proposition \ref{prop:tripartite}.

  For a polynomial $P(\Zbf_1,...,\Zbf_r)$, a tuple $\hbf=(h_1,...,h_n)$, and an integer $1\leq s \leq r$, define the differencing operator
    \begin{align*}
\Delta_{s,\hbf}P(\Zbf_1,...,\Zbf_r)=P(\Zbf_1,...,\Zbf_s + \hbf,...,\Zbf_r)-P(\Zbf_1,...,\Zbf_r).
    \end{align*}

\begin{lemma}\label{lem:weyl_differencing} Let $1\leq \kappa\leq k-1$. Then for any $(s_1,...,s_\kappa)\in \{1,2\}^\kappa$,
    \begin{align*}
     |S(\alpha)|^{2^\kappa}
        \ll_n
        (L_1^n L_2^n)^{2^\kappa -1}
        (\prod_{i=1}^\kappa L_{s_i}^n)^{-1} \sum_{\substack{\hbf_1,...,\hbf_\kappa \\ \hbf_i\in [-\Lbf_{s_i},\Lbf_{s_i}]}}\sum_{\substack{\xbf_1,\xbf_2 \\ \xbf_i \in B^{\kappa}_{i}}}e(\Delta_{s_\kappa,\hbf_\kappa}\cdots \Delta_{s_1,\hbf_1}\Tilde{F}(\xbf_1,\xbf_2;\alpha)),
    \end{align*}
    where $B^{\kappa}_{i}$ are boxes contained in $(\Mbf_i,\Mbf_i+\Lbf_i]$.
\end{lemma}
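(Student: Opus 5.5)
We argue by induction on $\kappa$, using the standard Weyl differencing step: Cauchy--Schwarz over the variable not being differenced, followed by van der Corput squaring in the differenced variable. For clarity we treat a general phase. Let $G(\xbf_1,\xbf_2)$ be any real polynomial, let $B_1\times B_2$ be a product of boxes with $B_i\subseteq(\Mbf_i,\Mbf_i+\Lbf_i]$, and let $s\in\{1,2\}$. Write $s'$ for the other index. The one-step claim is
\begin{align*}
\Bigl|\sum_{\xbf_1\in B_1,\xbf_2\in B_2}e(G)\Bigr|^2\ll_n \frac{L_1^nL_2^n}{L_s^n}\sum_{\hbf\in[-\Lbf_s,\Lbf_s]}\Bigl|\sum_{\xbf_1\in B_1',\xbf_2\in B_2'}e(\Delta_{s,\hbf}G)\Bigr|,
\end{align*}
where $B'_{s'}=B_{s'}$ and $B_s'=B_s\cap(B_s-\hbf)$. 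To prove it, apply Cauchy--Schwarz over $\xbf_{s'}$; this costs a factor $|B_{s'}|\le (2L_{s'})^n$. Then expand the square in $\xbf_s$, writing the second copy as $\xbf_s+\hbf$ with $\hbf\in[-\Lbf_s,\Lbf_s]$, and swap the order of summation back. Since $L_1^nL_2^n/L_s^n=L_{s'}^n$, this gives the claim; the conditions $\xbf_s,\xbf_s+\hbf\in B_s$ are exactly what produce the subbox $B'_s$.

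For the base case $\kappa=1$, apply the claim with $G=\tilde F(\cdot,\cdot;\alpha)$ and $s=s_1$. This yields $|S(\alpha)|^2\ll (L_1^nL_2^n)L_{s_1}^{-n}\sum_{\hbf_1}|\cdots|$, which matches the statement, since $2^\kappa-1=1$. The statement displays the inner sum without absolute values, so we read it with absolute values (or as an upper bound for its modulus), which is how it is used afterwards.

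For the inductive step, suppose the bound holds for $\kappa-1$. Square both sides and apply Cauchy--Schwarz to the sum over $(\hbf_1,\dots,\hbf_{\kappa-1})$, whose number of terms is $\ll\prod_{i<\kappa}L_{s_i}^n$. This gives
\begin{align*}
|S(\alpha)|^{2^\kappa}\ll (L_1^nL_2^n)^{2^\kappa-2}\Bigl(\prod_{i<\kappa}L_{s_i}^n\Bigr)^{-1}\sum_{\hbf_1,\dots,\hbf_{\kappa-1}}\Bigl|\sum_{\xbf_i\in B_i^{\kappa-1}}e(\Delta\cdots\tilde F)\Bigr|^2 .
\end{align*}
Now apply the one-step claim to each inner square, with $G=\Delta_{s_{\kappa-1},\hbf_{\kappa-1}}\cdots\Delta_{s_1,\hbf_1}\tilde F$ and $s=s_\kappa$. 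This contributes a further factor $L_1^nL_2^nL_{s_\kappa}^{-n}$, the new sum over $\hbf_\kappa$, and new subboxes $B_i^\kappa\subseteq B_i^{\kappa-1}$. The total power of $L_1^nL_2^n$ becomes $2^\kappa-1$ and the denominator becomes $\prod_{i\le\kappa}L_{s_i}^n$, as claimed.

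There are two points to watch, and both are only bookkeeping. First, the boxes $B_i^\kappa$ depend on the $\hbf_j$. The statement writes them without this dependence, so we record that they are subboxes of the original boxes, determined by $\hbf_1,\dots,\hbf_\kappa$. Second, the implied constants grow like $2^{O(n\kappa)}$, which is $\ll_{n,k}$; since $\kappa\le k-1$ this is harmless, although strictly speaking the constant depends on $k$ as well as $n$. The restriction $\kappa\le k-1$ is not needed for the inequality itself; it only ensures that the resulting phase still has positive degree, which matters for later use.
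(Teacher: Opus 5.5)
Your proof is correct and follows essentially the paper's argument. Both proceed by induction on $\kappa$: each step applies Cauchy--Schwarz over the non-differenced variable and then expands the square in the differenced variable, with the same bookkeeping of the factor $L_1^nL_2^n/L_{s_\kappa}^n$ and the shrinking, $\hbf$-dependent subboxes. The paper applies Cauchy--Schwarz jointly over $(\hbf_1,\dots,\hbf_{\kappa-1},\xbf_{s'})$, whereas you do it in two stages.

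The paper keeps the inner sum without absolute values, since it is an exact real expansion of squared moduli. Your argument gives this too if you drop the final modulus, though as you note only the absolute-value version is used afterwards.
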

    \begin{proof}
        We prove by induction on $\kappa$. Let $\kappa=1$ and $s_1, s\in \{1,2\}$ such that $s\neq s_1$. Applying the Cauchy-Schwarz inequality and a change of variables,
        \begin{align*}
            |S(\alpha)|^{2}   
        &\leq (\sum_{\xbf_{s} \in (\Mbf_{s}, \Mbf_{s} + \Lbf_{s} ]} 1)(\sum_{\xbf_{s} \in (\Mbf_{s}, \Mbf_{s} + \Lbf_{s} ]}|\sum_{\xbf_{s_1} \in (\Mbf_{s_1}, \Mbf_{s_1} + \Lbf_{s_1} ]}   
        e( \tilde{F}(\xbf_1,\xbf_2;\alpha))|^2)\\
     &    \leq \|\Lbf_{s}\|\sum_{\xbf_{s}\in (\Mbf_{s}, \Mbf_{s} + \Lbf_{s} ]}\sum_{\substack{\hbf_1\in [-\Lbf_{s_1},\Lbf_{s_1}] }}\sum_{\xbf_{s_1}\in B^1_{s_1}}
        e(\Delta_{s_1,\hbf_1}\Tilde{F}(\xbf_1,\xbf_2;\alpha)),
        \end{align*}
    where $B^1_{s_1}=\{\xbf_{s_1}\in (\Mbf_{s_1},\Mbf_{s_1} + \Lbf_{s_1}]:\xbf_{s_1} + \hbf_1\in (\Mbf_{s_1},\Mbf_{s_1} + \Lbf_{s_1}]\}$.
This proves the inequality for $\kappa=1$.
Suppose now that the inequality holds for a given $\kappa\geq 1$. Let $(s_1,...,s_{\kappa+1})\in \{1,2\}^{\kappa+1}$ and let $s \in \{1,2\}$ such that $s \neq s_{\kappa+1}$. Then by the inductive hypothesis,
\begin{align*}
        |S(\alpha)|^{2^{\kappa+1}}\ll_n
        (L_1^nL_2^n)^{2^{\kappa+1} -2}
        (\prod_{i=1}^\kappa L_{s_i}^n)^{-2}\Sigma,
    \end{align*}
where
    \begin{align*}
         \Sigma &=|\sum_{\substack{\hbf_1,...,\hbf_{\kappa} \\ \hbf_i\in [-\Lbf_{s_i},\Lbf_{s_i}]}}\sum_{\substack{\xbf_1,\xbf_2 \\ \xbf_i\in B^{\kappa}_{i}}}e(\Delta_{s_\kappa,\hbf_{\kappa}}\cdots \Delta_{s_1,\hbf_1}\Tilde{F}(\xbf_1,\xbf_2;\alpha))|^2.
    \end{align*}
Again by Cauchy-Schwarz and a change of variables,
    \begin{multline*}
      | \Sigma|
        \leq(\prod_{i=1}^\kappa \|\Lbf_{s_i}\|) \|\Lbf_{s}\|
        \sum_{\substack{\hbf_1,...,\hbf_{\kappa} \\ \hbf_i\in [-\Lbf_{s_i},\Lbf_{s_i}]}}\sum_{\substack{\xbf_{s}\in B^{\kappa}_{s}}}
        \sum_{\hbf_{\kappa+1}\in [-\Lbf_{s_{\kappa+1}},\Lbf_{s_{\kappa+1}}]}\\
        \sum_{\xbf_{s_{\kappa+1}}\in B^{\kappa+1}_{s_{\kappa+1}}}
        e(\Delta_{s_{\kappa+1},\hbf_{\kappa+1}}\Delta_{s_\kappa,\hbf_{\kappa}}\cdots \Delta_{s_1,\hbf_1}\Tilde{F}(\xbf_1,\xbf_2;\alpha)),
    \end{multline*}
    where $B^{\kappa +1}_{s_{\kappa+1}} = \{\xbf_{s_{\kappa +1}}\in B^{\kappa}_{s_{\kappa+1}} : \xbf_{s_{\kappa+1}} + \hbf_{\kappa+1} \in B^{\kappa}_{s_{\kappa+1}}\}$, which completes the proof.
    \end{proof}

To evaluate the right-hand side of Lemma \ref{lem:weyl_differencing}, we now write out explicitly the output of the differencing operator. For ease of notation, let $\Delta^{\kappa}_{\underline{s},\underline{\hbf}} = \Delta_{s_\kappa,\hbf_{\kappa}}\cdots \Delta_{s_1,\hbf_1}$; additionally, in Lemma \ref{lem:delta_explicit} and Proposition \ref{lem:weyl2}, we use superscripts for the letters $\xi,\eta,\sigma$ to denote indices instead of exponents.

\begin{lemma}\label{lem:delta_explicit}
    Fix an integer $\kappa\geq 1$. Let $\underline{s}=(s_1,...,s_\kappa)\in \{1,2\}^\kappa$, and for $i=1, 2$, define 
        \begin{align*}
    \sigma_{\kappa,\underline{s}}^i = |\{s\in \{s_1,...,s_\kappa\}:s= i\}|.
        \end{align*}      
    Fix tuples $\hbf_1,...,\hbf_{\kappa}$ with $\hbf_i\in [-\Lbf_{s_i},\Lbf_{s_i}]$.  Let $1\leq j\leq k$ and $0\leq \xi_{0}^1,\xi_{0}^2\leq j$ be integers satisfying $\xi_{0}^1 + \xi_{0}^2=j$.  Then,  if $\xi_{0}^1 <\sigma_{\kappa,\underline{s}}^1$ or $\xi_{0}^2 <\sigma_{\kappa,\underline{s}}^2$, then $\Delta_{\underline{s},\underline{\hbf}}^{\kappa}\tilde{\Phi}_{j,(\xi^{1}_{0},\xi^{2}_{0})}(\xbf_1,\xbf_2)$ vanishes.
    Otherwise,
 upon recalling the definition \eqref{def:tilde_phi},
    \begin{align}\label{eqn:delta_explicit2}
\Delta_{\underline{s},\underline{\hbf}}^{\kappa}\tilde{\Phi}_{j,(\xi^{1}_0,\xi^{2}_0)}(\xbf_1,\xbf_2)
                = \sum_{\iota_\kappa=0}^{\xi_{\kappa-1}^{s_\kappa}-1}\cdots \sum_{\iota_1=0}^{\xi_{0}^{s_1}-1}\binom{\xi_{\kappa-1}^{s_\kappa}}{\iota_\kappa}\cdots \binom{\xi_{0}^{s_1}}{\iota_1} \tilde{\Phi}_{j,(\xi_\kappa^{1},\xi_\kappa^{2},\eta_\kappa^{1},...,\eta_\kappa^{\kappa})}(\xbf_1,\xbf_2,\hbf_1,...,\hbf_{\kappa}),
            \end{align}
        where
            \begin{align*}
                (\xi^1_{\kappa},\xi^2_{\kappa},\eta^1_{\kappa},...,\eta^{\kappa}_{\kappa})=
                \begin{cases}
                    (\xi^1_{\kappa-1},\iota_\kappa,\eta^1_{\kappa-1},...,\eta_{\kappa-1}^{\kappa-1},\xi_{\kappa-1}^{2}-\iota_\kappa), &s_\kappa=1\\
                     (\iota_\kappa,\xi_{\kappa-1}^{2},\eta_{\kappa-1}^{1},...,\eta_{\kappa-1}^{\kappa-1},\xi_{\kappa-1}^{1}-\iota_\kappa), &s_\kappa=2,
                \end{cases}
            \end{align*}
       and $\eta_\kappa^{i}\geq 1$ for all $1\leq i \leq \kappa-1$.
       In particular, $\Delta_{\underline{s},\underline{\hbf}}^{\kappa}\tilde{\Phi}_{j,(\xi^{1}_0,\xi^{2}_0)}(\xbf_1,\xbf_2)$ has bihomogeneous degree $(\xi^{1}_0-\sigma^{1}_{\kappa,\underline{s}},\xi^{2}_0-\sigma^{2}_{\kappa,\underline{s}})$ in the variables $(\xbf_1,\xbf_2)$.
\end{lemma}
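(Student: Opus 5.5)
We argue by induction on $\kappa$; the only tool needed is multilinearity and symmetry of $\Phi_j$, which gives a binomial expansion for a single difference. The base computation is as follows. Let $\Psi(\Zbf_1,\dots,\Zbf_r)=\tilde\Phi_{j,(z_1,\dots,z_r)}(\Zbf_1,\dots,\Zbf_r)$ and fix a slot $s$. Since each $\Zbf_i$ enters $\Phi_j$ in $z_i$ arguments, multilinearity and symmetry give
\begin{align*}
\Psi(\Zbf_1,\dots,\Zbf_s+\hbf,\dots,\Zbf_r)=\sum_{\iota=0}^{z_s}\binom{z_s}{\iota}\tilde\Phi_{j,(z_1,\dots,\iota,\dots,z_r,z_s-\iota)}(\Zbf_1,\dots,\Zbf_s,\dots,\Zbf_r,\hbf).
\end{align*}
In each term, $\iota$ copies of $\Zbf_s$ are kept and $z_s-\iota$ copies are replaced by $\hbf$, appended as a new last variable. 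The term $\iota=z_s$ is exactly $\Psi$ itself. Hence $\Delta_{s,\hbf}\Psi$ is the same sum with $\iota$ running from $0$ to $z_s-1$, and every surviving term puts weight $z_s-\iota\ge1$ on the new variable $\hbf$. If $z_s=0$, the difference is identically zero.

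For the induction, the case $\kappa=1$ is the computation above with $r=2$, $(z_1,z_2)=(\xi^1_0,\xi^2_0)$ and $s=s_1$. Assume \eqref{eqn:delta_explicit2} holds for $\kappa-1$. Applying $\Delta_{s_\kappa,\hbf_\kappa}$ to the expression for $\kappa-1$ is linear, so it acts term by term. Each term is some $\tilde\Phi_{j,(\xi^1_{\kappa-1},\xi^2_{\kappa-1},\eta^1_{\kappa-1},\dots,\eta^{\kappa-1}_{\kappa-1})}(\xbf_1,\xbf_2,\hbf_1,\dots,\hbf_{\kappa-1})$, and the difference acts in slot $s_\kappa$, which carries weight $\xi^{s_\kappa}_{\kappa-1}$. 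The base computation applies and produces a sum over $0\le\iota_\kappa\le\xi^{s_\kappa}_{\kappa-1}-1$ with coefficient $\binom{\xi^{s_\kappa}_{\kappa-1}}{\iota_\kappa}$. The weight in slot $s_\kappa$ is reset to $\iota_\kappa$, and a new last weight $\eta^\kappa_\kappa=\xi^{s_\kappa}_{\kappa-1}-\iota_\kappa\ge1$ is appended. This is precisely the stated recursion.

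Caution is needed with the indexing in the displayed case split. As written, $s_\kappa=1$ is paired with $(\xi^1_{\kappa-1},\iota_\kappa,\dots,\xi^2_{\kappa-1}-\iota_\kappa)$, which acts on slot $2$. I will follow the convention consistent with the upper limits $\xi^{s_\kappa}_{\kappa-1}-1$: the slot $s_\kappa$ is the one that gets differenced. I will treat the displayed case labels as interchanged, or equivalently as a typo, and note this in the proof. The positivity $\eta^i_\kappa\ge1$ for $i\le\kappa-1$ is inherited from the inductive hypothesis, and $\eta^\kappa_\kappa\ge1$ was just shown.

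For the vanishing and degree statements, track the weights. Each application of $\Delta_{s_i,\hbf_i}$ lowers the weight in slot $s_i$ by at least one, since $\iota_i\le(\text{current weight})-1$, and it leaves the other slot unchanged. Hence along any term the weight in slot $i$ is at most $\xi^i_0-\sigma^i_{\kappa,\underline s}$. If this quantity is negative, the sum must pass through an empty range: once a slot's weight is $0$, a further difference there gives an empty sum, and a negative weight can never occur. So the expression vanishes.

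Otherwise, the bihomogeneous degree in $(\xbf_1,\xbf_2)$ of each term is $(\xi^1_\kappa,\xi^2_\kappa)$, which may be smaller than $(\xi^1_0-\sigma^1,\xi^2_0-\sigma^2)$. The claimed bidegree should therefore be read as an upper bound on the degree, or as the degree of the top term. The top term, with every $\iota_i$ equal to its maximum, has exactly that bidegree and is generically nonzero. The main obstacle is this bookkeeping point, namely reconciling the exact degree statement with the range of the $\iota$'s. I would state it carefully as "degree at most, with equality attained by the term $\iota_i=\xi^{s_i}_{i-1}-1$". That is all the later Weyl differencing with $\kappa=k-1$ needs, since only the terms linear in $\xbf$ matter there.
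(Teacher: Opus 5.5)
Your proposal is correct and follows the paper's proof essentially step for step. Both argue by induction on $\kappa$, expanding a single difference by multilinearity and symmetry of $\Phi_j$ into $\sum_{\iota=0}^{z_s-1}\binom{z_s}{\iota}\tilde{\Phi}_{j,(\ldots)}$; vanishing comes from a slot of weight zero (the paper phrases this as the expression being independent of $\xbf_1$), and the bidegree is read off from the top term $\iota_i=\xi^{s_i}_{i-1}-1$.

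Both of your caveats agree with the paper's own computation. The paper pairs $s_1=1$ (and $s_{\kappa+1}=1$) with $(\iota,\xi^2,\ldots,\xi^1-\iota)$, so the case labels in the displayed recursion are indeed interchanged. It also only ever uses the bihomogeneous degree as the degree of the top term, i.e.\ as an upper bound, which is all that Proposition \ref{lem:weyl2} needs.
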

    \begin{proof}
    We prove by induction on $\kappa$. Let $\kappa =1$, and suppose without loss of generality that $s_1=1$. If $\xi^{1}_0=0$, then $\tilde{\Phi}_{j,(\xi^{1}_0,\xi^{2}_0)}(\xbf_1,\xbf_2)$ is independent of $\xbf_1$, so for any $\hbf_1$, $\Delta_{s_1,\hbf_1}\tilde{\Phi}_{j,(\xi^{1}_0,\xi^{2}_0)}(\xbf_1,\xbf_2)=0$. Otherwise, by linearity,
        \begin{align*}
            \Delta_{s_1,\hbf_1}\tilde{\Phi}_{j,(\xi^{1}_0,\xi^{2}_0)}(\xbf_1,\xbf_2) 
            &= \tilde{\Phi}_{j,(\xi^{1}_0,\xi^{2}_0)}(\xbf_1+ \hbf_1,\xbf_2 ) - \tilde{\Phi}_{j,(\xi^{1}_0,\xi^{2}_0)}(\xbf_1,\xbf_2)\\
            &=\sum_{\iota_1=0}^{\xi^{1}_0-1} \binom{\xi^{1}_0}{\iota_1}\tilde{\Phi}_{j,(\iota_1,\xi^{2}_0,\xi^{1}_0-\iota_1)}(\xbf_1,\xbf_2,\hbf_1).
        \end{align*} 
    The term with the highest degree in $\xbf_1$ occurs when $\iota_1=\xi^{1}_0-1$, in which case the right-hand side has bihomogeneous degree $(\xi^{1}_0-1,\xi^{2}_0)$ in $(\xbf_1,\xbf_2)$.
    Additionally, $\eta_1^{1}=\xi^{1}_0-\iota_1\geq 1$. This proves the lemma for $\kappa=1$.

    Now assume the lemma holds for some $\kappa\geq 1$. Fix $\underline{s}'=(s_1,...,s_{\kappa+1})\in \{1,2\}^{\kappa+1}$, and let $\underline{s}=(s_1,...,s_{\kappa})$. By the inductive hypothesis, if $\xi^{1}_0<\sigma^{1}_{\kappa,\underline{s}}$ or $\xi^{2}_0<\sigma^{2}_{\kappa,\underline{s}}$, then for any $\hbf_1,...,\hbf_{\kappa}$,
        \begin{align}\label{eqn:delta_explicit3}
            \Delta^{\kappa}_{\underline{s},\underline{\hbf}}\tilde{\Phi}_{j,(\xi^{1}_0,\xi^{2}_0)}(\xbf_1,\xbf_2)
        \end{align}
    vanishes. So assume otherwise; then
           \eqref{eqn:delta_explicit3}
    has bihomogeneous degree $( \xi^{1}_0-\sigma^{1}_{\kappa,\underline{s}},\xi^{2}_0-\sigma^{2}_{\kappa,\underline{s}})$. Suppose without loss of generality that $s_{\kappa+1}=1$.
If $\xi^{1}_0<\sigma_{\kappa+1,\underline{s}'}^{1}$, we deduce that $\xi^{1}_0=\sigma^{1}_{\kappa,\underline{s}}$ and \eqref{eqn:delta_explicit3} is independent of $\xbf_1$, and hence 
    $\Delta_{s_{\kappa+1},\hbf_{\kappa+1}}$ applied to \eqref{eqn:delta_explicit3} vanishes. If instead $\xi^{1}_0\geq \sigma_{\kappa+1,\underline{s}'}^{1}$, then again by linearity, $\Delta_{s_{\kappa+1},\hbf_{\kappa+1}}\Delta^{\kappa}_{\underline{s},\underline{\hbf}}\tilde{\Phi}_{j,(\xi^{1}_0,\xi^{2}_0)}(\xbf_1,\xbf_2)$ is
    \begin{align*}
 \sum_{\iota_{\kappa+1}=0}^{\xi_\kappa^{s_{\kappa+1}}-1}\cdots \sum_{\iota_1=0}^{\xi_0^{s_1}-1}
        \binom{\xi_\kappa^{s_{\kappa+1}}}{\iota_{\kappa+1}}
        \cdots \binom{\xi_0^{s_1}}{\iota_1} 
        \tilde{\Phi}_{j,(\iota_{\kappa+1},\xi_\kappa^{2},\eta_\kappa^{1},...,\eta_\kappa^{\kappa}, \xi_\kappa^{1} - \iota_{\kappa+1})}(\xbf_1,\xbf_2,\hbf_1,...,\hbf_{\kappa}, \hbf_{\kappa+1}).
    \end{align*}
The term with the highest degree in $\xbf_1$ occurs when
    \begin{align*}
        \iota_{\kappa+1}=\xi_\kappa^{s_{\kappa+1}}-1=\xi_\kappa^{1}-1
        =\xi^{1}_0-\sigma^{1}_{\kappa,\underline{s}}-1 = \xi^{1}_0 - \sigma^{1}_{\kappa+1,\underline{s}'},
    \end{align*}
    while the degree in $\xbf_2$ remains at $\xi^{2}_0 - \sigma^{2}_{\kappa,\underline{s}}=\xi^{2}_0 - \sigma^{2}_{\kappa+1,\underline{s}'}$.
Therefore, the bihomogeneous degree is $( \xi^{1}_0 - \sigma^{1}_{\kappa+1,\underline{s}'}, \xi^{2}_0 - \sigma^{2}_{\kappa+1,\underline{s}'})$. Moreover, $\eta_{\kappa+1}^{\kappa} = \xi_\kappa^{1} - \iota_{\kappa+1} \geq 1$.
This completes the proof of the lemma.
    \end{proof}

Now we combine the previous two lemmas. Recall that we assumed $\alpha_I\not\in\bZ$, so we focus on the special case $\underline{s}=(s_1,...,s_{k-1})$, where $s_i=1$ for $1\leq i \leq k-(I+1)$ and $s_i=2$ for $k-I\leq i \leq k-1$. (We remark that this assumption is not mandatory, but it simplifies the steps notationally.)

In this following proposition, we shall see the effect of writing each $F_j$ as a symmetric, multilinear form. Our original system of polynomials $f_i(\xbf_1,\xbf_2)$ reduces to the system of forms $\tilde{\Phi}_{k,(k-i,i)}(\xbf_1,\xbf_2)$ for $0\leq i \leq k$. With the above choice of $\underline{s}$, for any $\underline{\hbf}$, the differencing operator $\Delta^{k-1}_{\underline{s},\underline{\hbf}}$ annihilates $\tilde{\Phi}_{k,(k-i,i)}(\xbf_1,\xbf_2)$ for all but two indices $i=I, I+1$. In the end, we obtain linear forms $\Psi_m(\hbf_1,...,\hbf_{k-1})$, defined in \eqref{eqn:def_psi},  with respect to a single, symmetric, multilinear form $\Phi_k$, instead of a system  of $k+1$ forms (e.g. as in \cite[Lemma 2.1]{Bir62}).

\begin{prop}\label{lem:weyl2}
  Let $\underline{s}=(s_1,...,s_{k-1})\in \{1,2\}^{k-1}$ where $s_i=1$ for $1\leq i \leq k-(I+1)$ and $s_i=2$ for $k-I\leq i \leq k-1$.
  Then
    \begin{align*}
          |S(\alpha)|^{2^{k-1}}\ll_n  (L_1^n L_2^n)^{2^{k-1} -1}
        (\prod_{i=1}^{k-1} L_{s_i}^n )^{-1} L_{2}^n\sum_{\substack{\hbf_1,...,\hbf_{k-1} \\ \hbf_i\in [-\Lbf_{s_i},\Lbf_{s_i}]}} \prod_{m=1}^n \min \{2L_{1},\|\alpha_{I} C(k) \Psi_m(\hbf_1,...,\hbf_{k-1}) \|^{-1}\},       
    \end{align*}
where $C(k)$ is a nonzero constant depending on $k$ and $\Psi_m$ is defined by
    \begin{align}\label{eqn:def_psi}
         \tilde{\Phi}_{k,(1,1,...,1)}(\xbf,\hbf_1,...,\hbf_{k-1})=\sum_{m=1}^n x_m \Psi_m(\hbf_1,...,\hbf_{k-1}),
    \end{align}
where recall the definition of $\tilde{\Phi}_{k,(1,1,...,1)}$ from \eqref{def:tilde_phi}.
\end{prop}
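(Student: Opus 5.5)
I plan to apply Lemma \ref{lem:weyl_differencing} with $\kappa=k-1$ and the prescribed $\underline{s}$, so that $\sigma^1_{k-1,\underline{s}}=k-1-I$ and $\sigma^2_{k-1,\underline{s}}=I$. This gives
\begin{align*}
|S(\alpha)|^{2^{k-1}}\ll_n (L_1^nL_2^n)^{2^{k-1}-1}\Big(\prod_{i=1}^{k-1}L_{s_i}^n\Big)^{-1}\sum_{\hbf_1,\dots,\hbf_{k-1}}\sum_{\xbf_1\in B_1^{k-1}}\sum_{\xbf_2\in B_2^{k-1}} e\big(\Delta^{k-1}_{\underline{s},\underline{\hbf}}\tilde F(\xbf_1,\xbf_2;\alpha)\big).
\end{align*}
Next I will compute the phase using Lemma \ref{lem:delta_explicit}. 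By \eqref{def:tildeF}, $\tilde F$ is a linear combination of the forms $\tilde\Phi_{j,(j-i,i)}(\xbf_1,\xbf_2)$ with coefficients $\alpha_i\binom{j}{i}$. The differenced form $\Delta^{k-1}\tilde\Phi_{j,(j-i,i)}$ vanishes unless $j-i\ge k-1-I$ and $i\ge I$. When it survives, it has bidegree $(j-i-(k-1-I),\, i-I)$, and the total degree is $j-(k-1)\le 1$.

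This leaves a short list of surviving terms:
\begin{itemize}
\item $(j,i)=(k,I)$, of bidegree $(1,0)$, which is linear in $\xbf_1$;
\item $(j,i)=(k,I+1)$, of bidegree $(0,1)$, which is linear in $\xbf_2$;
\item $(j,i)=(k-1,I)$, of bidegree $(0,0)$, which is a constant.
\end{itemize}
The phase is therefore $\sum_m x_{1,m}\,c_m(\underline{\hbf}) + \text{(linear in }\xbf_2) + \text{const}$.

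To compute $c_m$, I will unwind \eqref{eqn:delta_explicit2} for $(j,i)=(k,I)$. Each $\Delta$ reduces the count of the relevant variable, and in the top-degree term only $\iota=\xi-1$ survives in each step, with binomial factor $\binom{\xi}{\xi-1}=\xi$. Since the final form is linear in $\xbf_1$, exactly one copy of $\xbf_1$ remains and each $\hbf_i$ appears once, so $\eta^i=1$. Any term in which some $\eta^i\ge 2$ would exhaust the $\xbf$ copies too early.

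The resulting coefficient is
\begin{align*}
\binom{k}{I}\cdot (k-I)(k-I-1)\cdots 2\cdot I!,
\end{align*}
a nonzero integer times $\alpha_I$, which is $C(k)$ up to dependence on $I$. (I would absorb the $I$-dependence by taking $C(k)$ to depend on $I$ as well, or note that $\binom{k}{I}(k-I)!\,I!=k!$, so $C(k)=k!$.) By symmetry of $\Phi_k$, the surviving form is $k!\,\alpha_I\tilde\Phi_{k,(1,\dots,1)}(\xbf_1,\hbf_1,\dots,\hbf_{k-1})=k!\,\alpha_I\sum_m x_{1,m}\Psi_m(\underline{\hbf})$. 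I expect this bookkeeping to be the main obstacle: verifying that precisely these terms survive with exactly this combinatorial constant, and in particular that only the all-ones index configuration contributes to the $\xbf_1$-linear part.

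Finally, I take absolute values inside. I bound the $\xbf_2$ sum trivially by $|B_2|\le\|\Lbf_2\|\ll L_2^n$; this is where the extra factor $L_2^n$ comes from. The $\xbf_1$ sum is a product over $m$ of geometric sums over an interval of length at most $2L_1$, each bounded by $\min\{2L_1,\|k!\,\alpha_I\Psi_m(\underline{\hbf})\|^{-1}\}$ by the standard estimate. Summing over $\underline{\hbf}$ gives the claim.
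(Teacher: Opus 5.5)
Your proposal is correct and is essentially the paper's own proof. You apply Lemma~\ref{lem:weyl_differencing} with $\kappa=k-1$, use Lemma~\ref{lem:delta_explicit} to see that only $(j,i)=(k,I),(k,I+1),(k-1,I)$ survive, and take the path $\iota_\kappa=\xi^{s_\kappa}_{\kappa-1}-1$ with all $\eta=1$ to get the $\xbf_1$-linear term. You then bound the $\xbf_2$-sum trivially and the $\xbf_1$-sum by the geometric-series bound. Your evaluation $\binom{k}{I}(k-I)!\,I!=k!$ is a useful addition: it confirms the paper's implicit claim that the constant $C'(I,k)\binom{k}{I}$ depends only on $k$.

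There is one caveat, which you share with the paper: the bookkeeping $\sigma^2_{k-1,\underline{s}}=I$ requires $I\le k-1$. For $I=k$ the prescribed $\underline{s}$ is $(2,\dots,2)$ and the $\xbf_1$-linear coefficient is $k!\,\alpha_{k-1}$. So when $\alpha_0,\dots,\alpha_{k-1}\in\bZ$, the argument only yields the bound with $L_1^n$ in place of $L_2^n$ and $2L_2$ in place of $2L_1$ inside the minimum. The stated inequality can in fact fail there: take $k=2$, $n=1$, $F=X^2$, $\alpha=(0,0,\tfrac13+\tfrac{1}{100L_2^2})$ and $L_1=L_2^3$; then $|S(\alpha)|^2\gg L_1^2L_2^2$ while the right-hand side is $O(L_1^2L_2)$. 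The statement should therefore be restricted to $I\le k-1$, or, for $I=k$, modified by exchanging the roles of $\xbf_1$ and $\xbf_2$.
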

    \begin{proof}
    As before, let $\Delta^{\kappa}_{ \underline{s},\underline{\hbf}} = \Delta_{s_{\kappa},\hbf_{\kappa}}\cdots \Delta_{s_1,\hbf_1}$.
    Apply Lemma \ref{lem:weyl_differencing} with $\kappa = k-1$ and $\underline{s}$ as in the hypothesis, so that
    \begin{align*}
        |S(\alpha)|^{2^{k-1}}\ll_n
        (L_1^n L_2^n)^{2^{k-1} -1}
        (\prod_{i=1}^{k-1} L_{s_i}^n)^{-1} \sum_{\substack{\hbf_1,...,\hbf_{k-1} \\ \hbf_i\in [-\Lbf_{s_i},\Lbf_{s_i}]}}\sum_{\substack{\xbf_1,\xbf_2 \\ \xbf_i\in B^{k-1}_{i}}}e(\Delta^{k-1}_{ \underline{s},\underline{\hbf}}\Tilde{F}(\xbf_1,\xbf_2;\alpha)),
    \end{align*}
where, by definition \eqref{def:tildeF},
    \begin{align*}
      \Delta^{k-1}_{ \underline{s},\underline{\hbf}} \Tilde{F}(\xbf_1,\xbf_2;\alpha) = \sum_{j=0}^k  \sum_{i=0}^j \alpha_i  \binom{j}{i} \Delta^{k-1}_{ \underline{s},\underline{\hbf}}\Tilde{\Phi}_{j,(j-i,i)}(\xbf_1,\xbf_2).
    \end{align*}
    In the notation of Lemma \ref{lem:delta_explicit}, $\sigma^1_{k-1,\underline{s}} = k-(I+1)$ and $\sigma^2_{k-1,\underline{s}} = I$, and for fixed $j,i$, we have $\xi_0^1 = j-i$ and $\xi_0^2=i$.
For $0\leq j\leq k-2$ and $0\leq i\leq j$, either $j-i<k-(I+1)$ or $i<I$, or both, and hence by Lemma \ref{lem:delta_explicit}, $\Delta_{ \underline{s},\underline{\hbf}}^{k-1}\tilde{\Phi}_{j,(j-i,i)}$ vanishes.  For $j=k-1$ and $0\leq i \leq j$, $\Delta^{k-1}_{ \underline{s},\underline{\hbf}}\tilde{\Phi}_{j,(j-i,i)}$ vanishes unless $i=I$, in which case it has bihomogeneous degree $(0,0)$. Finally when $j=k$, $\Delta^{k-1}_{ \underline{s},\underline{\hbf}}\tilde{\Phi}_{j,(j-i,i)}$ vanishes unless $i\geq I$ and $k-i \geq k-(I+1)$. This occurs when $i=I$ or $i=I+1$, in which case $\Delta^{k-1}_{ \underline{s},\underline{\hbf}}\tilde{\Phi}_{k,(k-i,i)}$ has bihomogeneous degree $(1,0)$ and $(0,1)$, respectively. More precisely, by \eqref{eqn:delta_explicit2},
    \begin{multline*}
       \Delta^{k-1}_{ \underline{s},\underline{\hbf}}\tilde{\Phi}_{k,(k-i,i)}(\xbf_1,\xbf_2)
                = \sum_{\iota_{k-1}=0}^{\xi_{k-2}^{s_{k-1}}-1}\cdots \sum_{\iota_1=0}^{\xi_0^{s_1}-1}\binom{\xi_{k-2}^{s_{k-1}}}{\iota_{k-1}}\cdots \binom{\xi_0^{s_1}}{\iota_1} \\
          \cdot  \tilde{\Phi}_{k,(\xi_{k-1}^{1},\xi_{k-1}^{2},\eta_{k-1}^{1},...,\eta_{k-1}^{k-1})}(\xbf_1,\xbf_2,\hbf_1,...,\hbf_{k-1}),
    \end{multline*}
    where, for $i=I$ and $I+1$, the respective linear term occurs when $\iota_\kappa=\xi^{s_\kappa}_{\kappa-1}-1$ for all $1\leq \kappa \leq k-1$. By Lemma \ref{lem:delta_explicit}, we also have $\eta_{k-1}^{\kappa}\geq 1$. Since $\tilde{\Phi}_{k,(\xi_{k-1}^{1},\xi_{k-1}^{2},\eta_{k-1}^{1},...,\eta_{k-1}^{k-1})}$ is a degree $k$ multilinear form, we deduce that $\eta_{k-1}^{1}+\cdots + \eta_{k-1}^{k-1}=k-1$, and hence $\eta_{k-1}^{\kappa}=1$ for all $1\leq \kappa \leq k-1$. Thus when $\iota_\kappa=\xi^{s_\kappa}_{\kappa-1}-1$ for all $1\leq \kappa \leq k-1$, we can rewrite the linear term as
    \begin{align*}
         \tilde{\Phi}_{k,(\xi_{k-1}^{1},\xi_{k-1}^{2},1,...,1)} =
        \begin{cases}
         \sum_{m=1}^n x_{1,m}\Psi_m(\hbf_1,...,\hbf_{k-1}),    &i=I\\
         \sum_{m=1}^n x_{2,m}\Psi_m(\hbf_1,...,\hbf_{k-1}),    &i=I+1,
        \end{cases}
    \end{align*}
where $\Psi_m$ is multilinear in the $\hbf_i$'s. 
Then, combining these observations,
    \begin{align*}
        \Delta^{k-1}_{\underline{s}, \underline{\hbf}} \Tilde{F}(\xbf_1,\xbf_2;\alpha) 
        =\sum_{i=I}^{I+1} \alpha_{i}C'(i,k)\binom{k}{i}\sum_{m=1}^n x_{i-(I-1),m}\Psi_m(\hbf_1,...,\hbf_{k-1}) + R,
    \end{align*}
where $C'(i,k)$ is the product of binomial coefficients when $\iota_\kappa=\xi^{s_\kappa}_{\kappa-1}-1$ for all $1\leq \kappa \leq k-1$
and $R=R(k,\underline{s},\underline{\hbf})$ encapsulates all other terms.
Then, upon bounding the sum over $\xbf_2$ trivially, we get
    \begin{align*}
| \sum_{\substack{\xbf_1,\xbf_2 \\ \xbf_i\in B^{k-1}_{i}}}e( \Delta^{k-1}_{ \underline{s}, \underline{\hbf}}\Tilde{F}(\xbf_1,\xbf_2;\alpha)) |
      \leq  
     |B^{k-1}_{2} |\cdot |\sum_{\xbf_{1}\in B^{k-1}_{1}}e( \alpha_{I}C(k)\sum_{m=1}^n x_{1,m}\Psi_m(\hbf_1,...,\hbf_{k-1}))|,
    \end{align*}
    where applying the standard bound on one-dimensional exponential sums with linear phase completes the proof.
    \end{proof}

This completes the initial step of bounding $S(\alpha)$ from above by an expression depending on the multilinear forms $\Psi_m(\hbf_1,...,\hbf_{k-1})$, $1\leq m \leq n$. 

\subsection{What happens when the exponential sum is large}\label{sec:expsum_large}
In this section, we continue the standard next step of showing that if $|S(\alpha)|$ is large, then $\|\alpha_{I} C(k)\Psi_m(\hbf_1,...,\hbf_{k-1}) \|$ must be small for all $1\leq m \leq n$, for many tuples $(\hbf_1,...,\hbf_{k-1})$.
For $M_1,...,M_{k-1},M>0$, define
    \begin{multline*}
        N(M_1,...,M_{k-1}; M^{-1}) 
        = |\{(\hbf_1,...,\hbf_{k-1})\in \prod_{i=1}^{k-1}[-M_i,M_i]^n: \\ \|\alpha_I C(k)\Psi_m(\hbf_1,...,\hbf_{k-1})\|<M^{-1}, 1 \leq m \leq n\}|.
    \end{multline*}

\begin{lemma}\label{lem:N1}
 Let $(s_1,...,s_{k-1})\in \{1,2\}^{k-1}$ where $s_i=1$ for $1\leq i \leq k-(I+1)$ and $s_i=2$ for $k-I\leq i \leq k-1$.  Suppose $S(\alpha)\gg_n L_2^n L_1^{n-\delta}$ for some $\delta>0$. Then 
    \begin{align*}
        N(4L_{s_1},...,4L_{s_{k-1}};L_1^{-1}) \gg_n  (\prod_{i=1}^{k-1}L_{s_i}^n) L_1^{-2^{k-1}\delta} (\log L_1)^{-n}.
    \end{align*}
\end{lemma}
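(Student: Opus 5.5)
We plug the lower bound on $S(\alpha)$ into Proposition \ref{lem:weyl2}. Then we convert the resulting sum of products of $\min$'s into a count of $\underline{\hbf}$ with every $\|\alpha_I C(k)\Psi_m\|$ small. This is the standard route of \cite[Lemma 2.2]{Bir62}, adapted to boxes of two sizes as in \cite{Bra15}.

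\emph{Step 1.} Raise $S(\alpha)\gg_n L_2^nL_1^{n-\delta}$ to the power $2^{k-1}$ and compare with Proposition \ref{lem:weyl2}. The powers of $L_1^nL_2^n$ cancel and leave
\begin{align*}
\sum_{\substack{\hbf_1,...,\hbf_{k-1}\\ \hbf_i\in[-\Lbf_{s_i},\Lbf_{s_i}]}}\prod_{m=1}^n\min\{2L_1,\|\alpha_IC(k)\Psi_m(\underline{\hbf})\|^{-1}\}\gg_n \Big(\prod_{i=1}^{k-1}L_{s_i}^n\Big)L_1^{n-2^{k-1}\delta}.
\end{align*}
Here $\hbf_i$ ranges over $[-\Lbf_{s_i},\Lbf_{s_i}]\subset[-2L_{s_i},2L_{s_i}]^n$, using the dyadic assumption $L_{s_i,j}\le 2L_{s_i}$.

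\emph{Step 2 (the main step).} Isolate one variable, say $\hbf_1$ with $s_1$ fixed, and freeze $\hbf_2,\dots,\hbf_{k-1}$. Since $\Psi_m$ is multilinear, $\Psi_m(\underline{\hbf})$ is a linear form in $\hbf_1$; in particular $\Psi_m(\hbf_1-\hbf_1',\dots)=\Psi_m(\hbf_1,\dots)-\Psi_m(\hbf_1',\dots)$.

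Split $[0,1)^n$ into $L_1^n$ cubes of side $L_1^{-1}$. For each fixed $m$-tuple of classes $\|\cdot\|\in[r_m/L_1,(r_m+1)/L_1)$, let $A$ be the set of $\hbf_1$ with $\{\alpha_IC(k)\Psi_m\}$ in that cube for all $m$. Any two elements $\hbf_1,\hbf_1'\in A$ have difference in $[-4L_{s_1},4L_{s_1}]^n$ with $\|\alpha_IC(k)\Psi_m(\hbf_1-\hbf_1',\dots)\|<L_1^{-1}$ for all $m$. Hence $|A|\le N_1$, where $N_1$ counts such differences. Summing the weights $\prod_m\min\{2L_1,L_1/\min(r_m,L_1-r_m)\}$ over the classes costs $\ll(L_1\log L_1)^n$. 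This gives
\begin{align*}
\sum_{\hbf_1}\prod_m\min\{\cdots\}\ll_n (L_1\log L_1)^n\,\#\{\hbf_1\in[-4L_{s_1},4L_{s_1}]^n:\|\alpha_IC(k)\Psi_m\|<L_1^{-1}\ \forall m\}.
\end{align*}
Summing then over the frozen $\hbf_2,\dots,\hbf_{k-1}$, which lie in $[-2L_{s_i},2L_{s_i}]^n\subset[-4L_{s_i},4L_{s_i}]^n$, the right side becomes $(L_1\log L_1)^nN(4L_{s_1},\dots,4L_{s_{k-1}};L_1^{-1})$.

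The subtle points are two. First, the $\min$ is capped at $2L_1$ rather than being a genuine reciprocal, so the sum over classes must be organised so the constant $2$ and the $\log$ factor come out correctly. Second, the threshold $L_1^{-1}$ must be kept uniform even though $\hbf_1$ may range over a box of size $L_2$ when $s_1=2$. The latter is harmless, since the cube partition depends only on $L_1$.

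\emph{Step 3.} Combine Steps 1 and 2 and divide by $(L_1\log L_1)^n$:
\begin{align*}
N(4L_{s_1},\dots,4L_{s_{k-1}};L_1^{-1})\gg_n\Big(\prod_iL_{s_i}^n\Big)L_1^{-2^{k-1}\delta}(\log L_1)^{-n},
\end{align*}
as claimed. I expect the only genuine obstacle to be the bookkeeping in Step 2. Multilinearity makes the differencing in $\hbf_1$ legitimate, and all other steps are direct substitutions.
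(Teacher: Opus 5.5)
Your proposal is correct and follows essentially the same route as the paper, which also adapts Davenport's argument. Both substitute the hypothesis into Proposition \ref{lem:weyl2}, freeze all but one $\hbf_i$, partition the remaining box into $L_1^n$ cells by the fractional parts of $\alpha_I C(k)\Psi_m$, bound each cell by a difference count in $[-4L_{s_i},4L_{s_i}]^n$, and sum the weights to get $(L_1\log L_1)^n$. The only difference is that the paper isolates $\hbf_{k-1}$ rather than $\hbf_1$, which is immaterial by multilinearity. One small fix: your weight should use $\min(r_m, L_1-1-r_m)$ rather than $\min(r_m,L_1-r_m)$, since the top cell $r_m=L_1-1$ can contain points arbitrarily close to an integer; this does not change the $(L_1\log L_1)^n$ total.
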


\begin{proof}
This follows the same steps as in \cite[Lemma 13.2]{Dav05}.
By hypothesis and Proposition \ref{lem:weyl2},
        \begin{align}\label{eqn:counting_h}
            \sum_{\substack{\hbf_1,...,\hbf_{k-1} \\ \hbf_i \in [-\Lbf_{s_i}, \Lbf_{s_i}]}} \prod_{m=1}^n \min \{2L_1,\|\alpha_I C(k)\Psi_m(\hbf_1,...,\hbf_{k-1}) \|^{-1}\}
            \gg_n L_1^{n-2^{k-1}\delta} \prod_{i=1}^{k-1}L_{s_i}^n. 
        \end{align}
        We give an upper bound on the left-hand side in terms of $N(4L_{s_1},...,4L_{s_{k-1}};L_1^{-1})$. Fix $\hbf_1,...,\hbf_{k-2}$ with $\hbf_i\in [-\Lbf_{s_i},\Lbf_{s_i}]$ for $1\leq i \leq k-2$. Decompose the last box of summation as follows:
            \begin{align*}
           [-\Lbf_{s_{k-1}},\Lbf_{s_{k-1}}] = \bigcup_{r_1=0}^{L_1-1} \cdots \bigcup_{r_n=0}^{L_1-1} S_{r_1,...,r_n}, 
            \end{align*}
        where $ S_{r_1,...,r_n} = S_{r_1,...,r_n}(\hbf_1,...,\hbf_{k-2})$ is the set of $\hbf_{k-1} \in  [-\Lbf_{s_{k-1}},\Lbf_{s_{k-1}}]$ such that
         \begin{align*}
        r_m L_1^{-1}< \mathrm{frac}(\alpha_{I} C(k)\Psi_m(\hbf_1,...,\hbf_{k-1}))\leq (r_m+1)L_1^{-1}, \quad 1\leq m \leq n;
        \end{align*}
        here, for a real number $\alpha$, define $\mathrm{frac}(\alpha)$ to be its fractional part. We compare this to the following set.
    Define $S' = S'(\hbf_1,...,\hbf_{k-2})$ to be the set of $\hbf_{k-1}\in [-4L_{s_{k-1}},4L_{s_{k-1}}]^n$ such that
        \begin{align*}
           \|\alpha_{I} C(k) \Psi_m(\hbf_1,...,\hbf_{k-1})\|<L_1^{-1}, \quad 1\leq m \leq n
        \end{align*}
    so that 
        \begin{align}\label{eqn:N_sumA}
            N(4L_{s_1},...,4L_{s_{k-1}};L_1^{-1}) \geq \sum_{\substack{\hbf_1,...,\hbf_{k-2} \\ \hbf_i\in [-\Lbf_{s_i},\Lbf_{s_i}]}} |S'(\hbf_1,...,\hbf_{k-2})|.
        \end{align}
    We claim that if $0\leq r_m\leq L_1-1$ for all $1\leq m \leq n$, then 
        \begin{align}\label{eqn:A2A1}
            |S_{r_1,...,r_n}(\hbf_1,...,\hbf_{k-2})|\leq |S'(\hbf_1,...,\hbf_{k-2})|.
        \end{align}
    Indeed, fix an element $\hbf_{k-1}\in S_{r_1,...,r_n}$, and let $\hbf'_{k-1}\in S_{r_1,...,r_n}$ be another element. It suffices to show that $\hbf_{k-1} - \hbf'_{k-1}\in S'$. First, we check that $|h_{k-1,j} - h_{k-1,j}'|\leq 4L_{s_{k-1}}$. Next, note that for real numbers $\alpha, \beta$, $|\mathrm{frac}(\alpha - \beta)| \leq \max \{|\mathrm{frac}(\alpha) - \mathrm{frac} (\beta)|,1-|\mathrm{frac}(\alpha) - \mathrm{frac} (\beta)|\}$ and so $\|\alpha - \beta\| \leq \|\mathrm{frac}(\alpha) - \mathrm{frac} (\beta)\|$. Hence by the definition of $S_{r_1,...,r_n}$,
        \begin{align*}
        \|\alpha_{I} C(k)\Psi_m(\hbf_1,...,\hbf_{k-2,}\hbf_{k-1}-\hbf'_{k-1})\| \leq L_1^{-1}, \qquad 1 \leq m \leq n
        \end{align*}
    which proves the claim.

    We return to the expression \eqref{eqn:counting_h}; denote the left-hand side as $\Sigma$. Then
        \begin{align*}
        |\Sigma|&\leq  \sum_{\substack{\hbf_1,...,\hbf_{k-2} \\ \hbf_i\in [-\Lbf_{s_i},\Lbf_{s_i}]}} \sum_{\substack{r_1,...,r_n \\ 0\leq r_i \leq L_1-1}}\sum_{\hbf_{k-1}\in S_{r_1,...,r_n}}\prod_{m=1}^n \min \{2L_1,\|\alpha_{I} C(k) \Psi_m(\hbf_1,...,\hbf_{k-1}) \|^{-1}\}\\
        &\ll_n 
       \sum_{\substack{\hbf_1,...,\hbf_{k-2} \\ \hbf_i\in [-\Lbf_{s_i},\Lbf_{s_i}]}} \sum_{\substack{r_1,...,r_n\\ 0\leq r_i\leq L_1-1}} |S_{r_1,...,r_n}| \prod_{m=1}^n \min\left\{ L_1,\frac{L_1}{r_m},\frac{L_1}{L_1-(r_m+1)}\right\}.
        \end{align*}
    By \eqref{eqn:A2A1} and \eqref{eqn:N_sumA}, 
       $  |\Sigma| \leq N(4L_{s_1},...,4L_{s_{k-1}};L_1^{-1})  (L_1\log L_1)^n$,
    and hence together with \eqref{eqn:counting_h},
        \begin{align*}
          N(4L_{s_1},...,4L_{s_{k-1}};L_1^{-1})
           \gg_n L_1^{-2^{k-1}\delta} (\prod_{i=1}^{k-1}L_{s_i}^n)  (\log L_1)^{-n},
        \end{align*}
    which concludes the proof.
\end{proof}

\begin{prop}\label{prop:weyl3} 
Let $0<L_2\leq L_1$ and $0<\theta \leq \log L_2 / \log L_1$.
  Suppose $S(\alpha)\gg  L_2^n L_1^{n-\delta}$ for some $\delta>0$. Let $(s_1,...,s_{k-1})\in \{1,2\}^{k-1}$ where $s_i=1$ for $1\leq i \leq k-(I+1)$ and $s_i=2$ for $k-I\leq i \leq k-1$. Define $\lambda_\kappa = L_1/L_{s_\kappa}$ for each $1\leq \kappa \leq k-1$. Then
    \begin{align*}
        N(4L_1^{\theta},...,4L_1^{\theta};L_1^{-k+(k-1)\theta} \lambda_{s_{1}}\cdots \lambda_{s_{k-1}})\gg_n L_1^{\theta(k-1)n -2^{k-1}\delta} 
        (\log L_1)^{-n}.
    \end{align*}
\end{prop}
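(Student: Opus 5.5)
Starting from Lemma \ref{lem:N1}, I will apply the standard Davenport shrinking lemma (as in \cite[Lemma 13.3]{Dav05} or \cite[Lemma 2.4]{Bir62}) once to each of the $k-1$ multilinear variables $\hbf_1,\dots,\hbf_{k-1}$. Lemma \ref{lem:N1} gives
\[
N(4L_{s_1},\dots,4L_{s_{k-1}};L_1^{-1}) \gg_n \Big(\prod_{i=1}^{k-1}L_{s_i}^n\Big) L_1^{-2^{k-1}\delta}(\log L_1)^{-n}.
\]
The relevant tool is the geometry-of-numbers statement. Fix all variables but $\hbf_\kappa$. Since $\Psi_m$ is linear in $\hbf_\kappa$, the set of $\hbf_\kappa$ with $\|\alpha_I C(k)\Psi_m\|<a^{-1}$ for all $m$ is the set of lattice points of a $2n$-dimensional lattice (with symmetric structure, the matrix being symmetric by symmetry of $\Phi_k$) in a box. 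Then for $0<Z\le 1$, the number of solutions with $|\hbf_\kappa|\le ZA$ and precision $Z a^{-1}$ is $\gg Z^n\cdot$(number with $|\hbf_\kappa|\le A$, precision $a^{-1}$), up to constants depending on $n$. This is exactly \cite[Lemma 12.6]{Dav05}.

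The steps are as follows.

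\emph{Step 1.} For $\kappa=1,\dots,k-1$ in turn, apply the shrinking lemma in the variable $\hbf_\kappa$ with $A=4L_{s_\kappa}$ and $Z_\kappa = L_1^{\theta}/L_{s_\kappa}$. This gives box size $4L_1^{\theta}$ and multiplies the precision by $Z_\kappa$. The constraint $Z_\kappa\le 1$ needs $L_1^\theta\le L_{s_\kappa}$, which holds since $L_1^\theta\le L_2\le L_1$ by the hypothesis $\theta\le \log L_2/\log L_1$. Each step costs a factor $Z_\kappa^n$ in the count.

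\emph{Step 2.} After all $k-1$ steps, the precision is
\[
L_1^{-1}\prod_\kappa Z_\kappa = L_1^{-1} L_1^{(k-1)\theta}\prod_\kappa L_{s_\kappa}^{-1} = L_1^{-k+(k-1)\theta}\prod_\kappa \lambda_{\kappa},
\]
using $L_{s_\kappa}^{-1}=\lambda_\kappa L_1^{-1}$. This matches the stated precision; the statement's indexing $\lambda_{s_\kappa}$ should be read as $\lambda_\kappa$. The count becomes
\[
\Big(\prod L_{s_i}^n\Big)L_1^{-2^{k-1}\delta}(\log L_1)^{-n}\prod_\kappa Z_\kappa^n = L_1^{\theta(k-1)n-2^{k-1}\delta}(\log L_1)^{-n}.
\]
This is the claim.

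The main obstacle is justifying the shrinking lemma in our setting. The form is linear in $\hbf_\kappa$ with the other variables fixed, so for each fixed tuple of the others one sets up the $2n$-dimensional lattice
\[
\{(\hbf_\kappa,\mathbf{u}): u_m = \alpha_I C(k)\Psi_m \text{ rounded}\},
\]
suitably scaled. Its matrix has the transpose-symmetric shape required by Davenport's lemma, because $\Psi_m(\dots,\hbf_\kappa,\dots)$ is the coefficient pairing $\tilde\Phi_k(\mathbf{e}_m,\dots,\hbf_\kappa,\dots)$ and $\Phi_k$ is symmetric. One must then sum the fibrewise inequality over the other (already shrunk or not yet shrunk) variables, being careful that the box for the remaining variables is unchanged during each step.
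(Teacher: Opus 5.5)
Your proposal is correct and is essentially the paper's proof: starting from Lemma \ref{lem:N1}, the paper applies Davenport's lemma (Lemma \ref{lem:LA}, applicable by the symmetry of $\Phi_k$) to $\hbf_1,\dots,\hbf_{k-1}$ in turn, choosing $A_\kappa,Z_\kappa,Z'_\kappa$ so that the box for $\hbf_\kappa$ shrinks from $4L_{s_\kappa}$ to $4L_1^{\theta}$ while the precision is multiplied by $L_1^{\theta}/L_{s_\kappa}$, exactly as in your Steps 1 and 2. The one point to tighten is your restatement of the shrinking lemma with box and precision as independent parameters: it also needs the product of box size and precision to be $\ll 1$ before each shrink (this is the condition $Z\le 1$ in Davenport's normalization, and the statement is false without it), and this does hold here since $4L_{s_\kappa}\cdot L_1^{-1}\prod_{i<\kappa}(L_1^{\theta}/L_{s_i})\le 4$.
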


We record the following result of Davenport.
\begin{lemma}[{\cite[Lemma 3.3]{Dav59}}]\label{lem:LA}
     Let $P_1,...,P_n\in \bR[X_1,...,X_n]$ be linear forms given by $P_i=a_{i,1}X_1+\cdots + a_{i,n}X_n$ such that $a_{i,j}=a_{j,i}$. Let $A>1$ be real. Define
        \begin{align*}
            U(Z) = |\{(u_1,...,u_n)\in \mathbb{Z}^{n}: |u_i|<AZ \text{ and } \|P_i(u_1,...,u_n)\|<A^{-1}Z, \ 1 \leq i \leq n\}|.
        \end{align*}
    Then for $0<Z'\leq Z\leq 1$, we have $U(Z)/U(Z') \ll (Z/Z')^n$.
\end{lemma}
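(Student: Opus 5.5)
We prove this statement (Davenport's lemma) by the geometry of numbers, realizing $U(Z)$ as a count of points of a self-dual lattice in a cube of side $Z$. Let $P$ denote the symmetric matrix $(a_{i,j})$ and consider
\begin{align*}
    \Lambda=\{(A^{-1}\ubf,\,A(P\ubf+\vbf)) : \ubf,\vbf\in\bZ^n\}\subset \bR^{2n}.
\end{align*}
This lattice has generator matrix $\begin{bmatrix} A^{-1}I & 0\\ AP & AI\end{bmatrix}$, so its determinant is $1$. Since $Z\leq 1<A$, we have $A^{-1}Z<1/2$. Hence for each $\ubf$ with $\|P_i(\ubf)\|<A^{-1}Z$ for all $i$, there is exactly one $\vbf\in\bZ^n$ with $|P_i(\ubf)+v_i|<A^{-1}Z$ for all $i$. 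Consequently $U(Z)$ equals the number of points of $\Lambda$ in the open cube $\{\wbf\in\bR^{2n}: |w_i|<Z\}$, and the same holds with $Z'$ in place of $Z$.

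Let $\mu_1\leq\cdots\leq\mu_{2n}$ be the successive minima of $\Lambda$ with respect to this unit cube. The key structural input is that $\Lambda$ is self-dual up to an isometry because $P$ is symmetric. Explicitly, pair $(\xbf,\ybf)\in\bR^{2n}$ with $(\xbf',\ybf')$ by the alternating form $\xbf\cdot\ybf'-\ybf\cdot\xbf'$. For two points of $\Lambda$ this form equals
\begin{align*}
    \ubf\cdot(P\ubf'+\vbf')-(P\ubf+\vbf)\cdot\ubf' = \ubf\cdot\vbf'-\vbf\cdot\ubf'\in\bZ,
\end{align*}
using $\ubf\cdot P\ubf'=P\ubf\cdot\ubf'$. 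Since $\det\Lambda=1$, this shows that the polar lattice $\Lambda^\ast$ is the image of $\Lambda$ under the coordinate map $(\xbf,\ybf)\mapsto(-\ybf,\xbf)$, which preserves the cube. Mahler's transference theorem, combined with Minkowski's second theorem, then gives $\mu_j\mu_{2n+1-j}\asymp_n 1$ for every $j$.

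Next we invoke the standard lattice point count. For $0<Z\leq 1$, the number of points of $\Lambda$ in the cube of side $Z$ is
\begin{align*}
    \asymp_n \prod_{j:\,\mu_j<Z} \frac{Z}{\mu_j},
\end{align*}
with the empty product equal to $1$, coming from the origin. The lower bound follows by taking integer combinations of a reduced basis with small coefficients. The upper bound follows by counting residue classes modulo the sublattice spanned by the vectors achieving $\mu_j<Z$, as in Davenport's original argument.

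Taking the ratio of the counts at $Z$ and $Z'$ yields
\begin{align*}
    \frac{U(Z)}{U(Z')}\ll_n \prod_{j:\,\mu_j<Z}\frac{Z}{\max(\mu_j,Z')}.
\end{align*}
Each factor is at most $Z/Z'$. For $j\leq n$ we simply use this bound, which contributes at most $(Z/Z')^n$. For $j>n$ we use instead the factor bound $Z/\mu_j\leq 1/\mu_{n+1}$. Duality gives $\mu_{n+1}\gg_n 1/\mu_n$, and since $\mu_n\leq\mu_{n+1}<Z\leq1$ we get $1/\mu_n>1$, so $\mu_{n+1}\gg_n 1$. Thus each factor with $j>n$ is $O_n(1)$, and together these indices contribute only $O_n(1)$. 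This gives $U(Z)/U(Z')\ll_n (Z/Z')^n$.

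The main obstacle is making the duality step $\mu_j\mu_{2n+1-j}\asymp 1$ precise. This requires identifying the polar of $\Lambda$ via the alternating pairing above, which is exactly where the symmetry $a_{i,j}=a_{j,i}$ enters; the rest is routine geometry of numbers.
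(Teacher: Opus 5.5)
Your argument is correct and is essentially Davenport's original proof. The paper quotes this lemma from Davenport rather than reproducing a proof, and you recover the same ingredients: the $2n$-dimensional lattice of determinant $1$; the symmetry of $(a_{i,j})$ making the polar lattice $\Lambda^\ast$ a signed coordinate permutation of $\Lambda$; and Mahler's transference $\mu_j\mu_{2n+1-j}\asymp_n 1$, where passing between the cube and its polar cross-polytope costs only a factor $2n$. You also use the successive-minima count $\asymp_n\prod_j\max(1,Z/\mu_j)$, and $\mu_{n+1}\gg_n 1$ to absorb the indices $j>n$.

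There is one small slip. The hypotheses $Z\leq 1<A$ only give $A^{-1}Z<1$, not $A^{-1}Z<1/2$. So each admissible $\ubf$ corresponds to between $1$ and $2^n$ vectors $\vbf$, and $U(Z)$ equals the lattice-point count only up to a factor $2^n$. This is harmless for the $\ll_n$ bound.
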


\begin{proof}[Proof of Proposition \ref{prop:weyl3}]
 For each $1\leq \kappa \leq k-1$ and $\hbf_1,...,\hbf_{\kappa -1}, \hbf_{\kappa +1},...,\hbf_{k-1}$, let
        \begin{align*}
            U_\kappa(A,Z) = |\{\hbf_{\kappa}\in [-AZ,AZ]^n: \|\alpha_I C(k) \Psi_m(\hbf_1,...,\hbf_{k-1})\|<A^{-1}Z, \ 1 \leq m \leq n\}|,
        \end{align*}
    where we view $\Psi_m(\hbf_1,...,\hbf_{k-1})$ as a linear form in $\hbf_{\kappa}$. Note that the condition $a_{ij}=a_{ji}$ in Lemma \ref{lem:LA} is satisfied by definition of $\Psi_m$ from \eqref{eqn:def_psi} and symmetricity of $\tilde{\Phi}_{k,(1,1,...,1)}$.
    We will apply Lemma \ref{lem:LA} $k-1$ times, to each $\kappa$, with
        \begin{align*}
        A_\kappa &= 2(L_1^{(\kappa+1) - (\kappa -1)\theta} \lambda_{s_{1}}^{-1}\cdots \lambda_{s_{\kappa}}^{-1})^{1/2},\\
            Z_\kappa &= 2(L_1^{-(\kappa-1) + (\kappa -1)\theta} \lambda_{s_{1}}\cdots \lambda_{s_{\kappa -1}}\lambda_{s_{\kappa}}^{-1})^{1/2},\\
            Z_\kappa' &= 2(L_1^{-(\kappa+1) +(\kappa+1)\theta}\lambda_{s_{1}}\cdots \lambda_{s_{\kappa}})^{1/2}.
        \end{align*}
    We check that $Z'_\kappa\leq Z_\kappa \leq 1$ holds since $L_1^\theta \leq L_i\leq L_1$ for $i=1,2$, which is true by assumption. With the above choices, we have $ A_\kappa Z_\kappa=4L_{s_{\kappa}}$, $A_\kappa^{-1} Z_\kappa=L_1^{-\kappa+(\kappa-1)\theta} \lambda_{s_{1}}\cdots \lambda_{s_{\kappa-1}}$, and $  A_\kappa Z'_\kappa=4L_1^\theta$, $A_\kappa^{-1} Z'_\kappa=L_1^{-(\kappa+1)+\kappa\theta} \lambda_{s_{1}}\cdots \lambda_{s_{\kappa}}$.
    Then, applying Lemma \ref{lem:LA} gives 
        \begin{align}\label{eqn:apply_lem_LA}
            U_\kappa(A_\kappa, Z'_\kappa) \gg U_\kappa(A_\kappa, Z_\kappa) (\frac{Z'_\kappa}{Z_\kappa})^n \gg U_\kappa(A_\kappa, Z_\kappa) (L_1^{-1+\theta} \lambda_{s_{\kappa}})^n.
        \end{align}
    We claim that for $2\leq \kappa\leq k-1$,
        \begin{multline}\label{eqn:ineq_N}
            N(4L_1^{\theta},...,4L_1^{\theta},4L_{s_{\kappa+1}},...,4L_{s_{k-1}};A^{-1}_\kappa Z'_\kappa) \\
            \gg (L_1^{-1+\theta}\lambda_{s_{\kappa}})^n N(4L_1^{\theta},...,4L_1^{\theta},4L_{s_\kappa},...,4L_{s_{k-1}};A^{-1}_{\kappa-1} Z'_{\kappa-1}).
        \end{multline}
Indeed, by \eqref{eqn:apply_lem_LA}, the left-hand side is
        \begin{align*}
       \sum_{\substack{\hbf_i\in [-4L_1^{\theta},4L_1^{\theta}]^n\\ 1 \leq i \leq \kappa-1}}   \sum_{\substack{\hbf_i\in [-4L_{s_i},4L_{s_i}]^n\\ \kappa+1 \leq i \leq k-1}}     U_\kappa (A_\kappa,Z'_\kappa) 
        \gg  (L_1^{-1+\theta} \lambda_{s_{\kappa}})^n  \sum_{\substack{\hbf_i\in [-4L_1^{\theta},4L_1^{\theta}]^n\\ 1\leq i \leq \kappa-1}} \sum_{\substack{\hbf_i\in [-4L_{s_i},4L_{s_i}]^n\\ \kappa+1 \leq i \leq k-1}}       U_\kappa (A_\kappa, Z_\kappa).
        \end{align*}
    Since $A^{-1}_\kappa Z_\kappa = A^{-1}_{\kappa-1} Z'_{\kappa-1}$, the double sum on the right-hand side is precisely 
        \begin{align*}
            N(4L_1^{\theta},...,4L_1^{\theta},4L_{s_\kappa},...,4L_{s_{k-1}};A^{-1}_{\kappa-1} Z'_{\kappa-1}),
        \end{align*}
    which proves the claim. 
    Iterating \eqref{eqn:ineq_N} $k-2$ times and then \eqref{eqn:apply_lem_LA} once, we obtain
        \begin{align*}
            N(4L_1^{\theta},...,4L_1^{\theta};A^{-1}_{k-1} Z'_{k-1}) 
           & \gg L_1^{(-1+\theta)(k-2)n}(\lambda_{s_{2}}\cdots \lambda_{s_{k-1}})^n N(4L_1^\theta,4L_{s_2},...,4L_{s_{k-1}};A^{-1}_1 Z'_{1})\\
          & \gg L_1^{(-1+\theta)(k-1)n}(\lambda_{s_{1}}\cdots \lambda_{s_{k-1}})^n N(4L_{s_1},...,4L_{s_{k-1}};L_1^{-1}).
        \end{align*}
    Finally, apply Lemma \ref{lem:N1} to the right-hand side to get
        \begin{multline*}
            N(4L_1^{\theta},...,4L_1^{\theta};L_1^{-k+(k-1)\theta} \lambda_{s_{1}}\cdots \lambda_{s_{k-1}})\\
            \gg_n L_1^{(-1+\theta)(k-1)n}(\lambda_{s_{1}}\cdots \lambda_{s_{k-1}})^n
        L_1^{-2^{k-1}\delta} (\prod_{i=1}^{k-1} L_{s_i})^n(\log L_1)^{-n},
        \end{multline*}
    which completes the proof.
\end{proof}

\subsection{Proof of Theorem \ref{thm:expsum_1}}\label{sec:pf_22}
We record a version of the following result of Schmidt, which bounds the number of points in the intersection of a variety over $\bF_p$ with a ``box'' in $\bF_p$. This is essentially a finite field analogue of \cite[Lemma 3.3]{Bir62} and a generalization of the Schwartz-Zippel lemma for varieties over finite fields. We refer readers to e.g. \cite[Theorem 3.1]{Bro09} for a proof in the integers case which is analogous to the finite field case.
\begin{lemma}[{\cite[Equation (6.5)]{Sch84_expsums}}]\label{lem:schmidt} Let $r,l\geq 1$ and $d\geq 0$ be integers.
Let $D(Z)\subseteq (\bF_p)^r$ be a subset whose projection onto any coordinate axis has cardinality at most $Z$.
    Let $V\subseteq (\overline{\bF}_p)^r$ be an algebraic set defined by the zero locus of polynomials $g_1,...,g_l$, where $\deg g_i\leq l$ for all $i$, and suppose $V$ has dimension $d$. Then $|V\cap D(Z)|\ll_{r,l} Z^d$.
\end{lemma}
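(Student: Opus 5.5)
We argue by induction on the dimension, slicing by coordinate hyperplanes. Let $X_i\subseteq \bF_p$ be the projection of $D(Z)$ onto the $i$-th coordinate axis, so $|X_i|\leq Z$ and $D(Z)\subseteq X_1\times\cdots\times X_r$. It therefore suffices to bound $|V\cap (X_1\times\cdots\times X_r)|$.

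To run the induction we replace the hypothesis ``cut out by $l$ polynomials of degree $\leq l$'' by a hypothesis on the degree, which behaves well under slicing. We use the degree of an algebraic set in the sense of Heintz, namely the sum of the degrees of its irreducible components. By the Heintz--B\'ezout inequality, $V$ has degree $\delta\leq l^r$. We prove the stronger claim:
\begin{quote}
For every algebraic set $W\subseteq \overline{\bF}_p^{\,r}$ with $\dim W\leq d$ and $\deg W\leq \delta$, and all sets $X_i\subseteq\bF_p$ with $|X_i|\leq Z$,
\[
|W\cap (X_1\times\cdots\times X_r)|\leq C(r,\delta)\,Z^{d}.
\]
\end{quote}
The lemma then follows with an implied constant depending only on $r$ and $l$.

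The induction is on $r$ and, for fixed $r$, on $d$.
\begin{enumerate}
\item Since $W$ has at most $\delta$ irreducible components, each of degree $\leq\delta$, we may assume $W$ is irreducible.
\item If $d=0$, then $W$ is a single point and the bound is trivial.
\item If $d\geq 1$, split according to the last coordinate: $W\cap(X_1\times\cdots\times X_r)$ is the disjoint union over $c\in X_r$ of the points of $W\cap\{x_r=c\}$ lying in $X_1\times\cdots\times X_{r-1}\times\{c\}$.
\item If $W\not\subseteq\{x_r=c\}$ for every $c$, then each slice $W\cap\{x_r=c\}$ is either empty or has dimension $\leq d-1$. By B\'ezout its degree is $\leq\deg W\leq\delta$. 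The inductive hypothesis in $d$ bounds each slice by $C(r,\delta)Z^{d-1}$, and summing over the at most $Z$ values of $c$ gives $Z^d$.
\item Otherwise $W\subseteq\{x_r=c_0\}$ for exactly one $c_0$. Then $W$ is isomorphic, via deleting the last coordinate, to an algebraic set in $\overline{\bF}_p^{\,r-1}$ of the same dimension and degree, and the inductive hypothesis in $r$ applies. (For $r=1$, $W$ is either a point or all of $\overline{\bF}_p$, and the bound is $\leq Z$.)
\end{enumerate}

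The main obstacle is the degree bookkeeping, which I would handle with the standard Heintz--B\'ezout facts. I need three things: (a) passing from defining equations to a degree bound $\deg V\leq l^r$ that depends only on $r$ and $l$; (b) the slice inequality $\deg(W\cap H)\leq\deg W$ for a hyperplane $H$, together with the fact that a proper hyperplane section of an irreducible $W$ drops the dimension; (c) the fact that the number of irreducible components is bounded by the degree. With these in hand, all constants depend only on $r$ and $\delta\leq l^r$, and therefore only on $r$ and $l$, as required.
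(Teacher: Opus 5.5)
Your argument is correct and is essentially the standard proof the paper defers to (Browning's Theorem 3.1, transported to $\bF_p$): bound $\deg V$ in terms of $r,l$ via Bézout, pass to irreducible components, and slice by coordinate hyperplanes $\{x_i=c\}$, inducting on dimension. The only fix is in the constants: as written, reducing to components costs a factor $\delta$ at each dimension step, so a single $C(r,\delta)$ does not close the induction; either let the constant depend on $d\leq r$, or better, prove $|W\cap(X_1\times\cdots\times X_r)|\leq \deg W\cdot Z^{\dim W}$ for $Z\geq 1$, which closes exactly.
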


We now deduce the following consequence of Proposition \ref{prop:weyl3}.

\begin{prop}\label{prop:tripartite} Let $n,k$ be positive integers with $n\geq k-1$.
Let $p$ be a prime such that $C(k)<p$, where $C(k)$ is from Proposition \ref{lem:weyl2}. Let $\alpha = (\alpha_0,...,\alpha_k)$ with $\alpha_i=a_i/p$, where $0\leq a_i \leq p-1$, and suppose $a_I\not\equiv 0 \modd p$ for some $0\leq I \leq k$.
Let $0<L_2\leq L_1 \leq p$ with $L_1>p^{1/k} \lambda_2^{1-1/k}$, and let $0<\theta < \log (L_1^k p^{-1}\lam_2^{1-k}) / \log (L_1^{k-1})$. 
Then, either
        \begin{enumerate}
            \item $S(\alpha)\ll_n L_2^n L_1^{n-\delta}$ for some $\delta>0$, or
            \item for all $\varepsilon>0$, $\dim_{\overline{\bF}_p} \sing F_k \geq n - 2^{k-1}\delta/\theta - \varepsilon$.
        \end{enumerate}
\end{prop}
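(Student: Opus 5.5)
Assume alternative (1) fails, so that $S(\alpha)\gg_n L_2^nL_1^{n-\delta}$. We aim to show that alternative (2) then holds. The idea is to turn the Diophantine information of Proposition \ref{prop:weyl3} into the exact vanishing mod $p$ of the forms $\Psi_m$ on many points of a small box. Those points then lie on a variety that sits inside the singular locus of $F_k$, and Lemma \ref{lem:schmidt} converts the point count into a dimension bound.

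\emph{Step 1: apply Proposition \ref{prop:weyl3}.} With the given $\underline{s}$, Proposition \ref{prop:weyl3} yields at least $\gg_n L_1^{\theta(k-1)n-2^{k-1}\delta}(\log L_1)^{-n}$ tuples $(\hbf_1,\dots,\hbf_{k-1})\in[-4L_1^\theta,4L_1^\theta]^{(k-1)n}$ satisfying
\[
\|a_IC(k)\Psi_m(\hbf_1,\dots,\hbf_{k-1})/p\|<L_1^{-k+(k-1)\theta}\lambda_{s_1}\cdots\lambda_{s_{k-1}}
\]
for all $m$. Exactly $I$ of the $s_i$ equal $2$, so the product of the $\lambda$'s is at most $\lambda_2^{k-1}$. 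The hypothesis $\theta<\log(L_1^kp^{-1}\lambda_2^{1-k})/\log(L_1^{k-1})$ rearranges to $L_1^{-k+(k-1)\theta}\lambda_2^{k-1}<p^{-1}$. Since $a_IC(k)\Psi_m$ is an integer, the fractional part of $a_IC(k)\Psi_m/p$ is a multiple of $1/p$. Being at distance less than $1/p$ from an integer therefore forces $p\mid a_IC(k)\Psi_m(\hbf)$. Because $C(k)<p$, $p\nmid a_I$ and $C(k)\neq 0$, we conclude $\Psi_m(\hbf_1,\dots,\hbf_{k-1})\equiv0\pmod p$ for all $1\le m\le n$.

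\emph{Step 2: apply Lemma \ref{lem:schmidt}.} Let $V\subseteq\overline{\bF}_p^{(k-1)n}$ be the variety cut out by $\Psi_1,\dots,\Psi_n$. Reduce the tuples mod $p$; this is injective provided $8L_1^\theta<p$, which holds because $\theta<1$ and $L_1\le p$ (adjust constants if needed). The box has projections of size $Z\ll L_1^\theta$. Lemma \ref{lem:schmidt}, with $r=(k-1)n$ and $l=\max(n,k-1)$ (so that the degrees are bounded by $l$), gives $L_1^{\theta(k-1)n-2^{k-1}\delta}(\log L_1)^{-n}\ll L_1^{\theta\dim V}$. Taking logarithms and absorbing the log factor into $\varepsilon$, we get
\[
\dim V\ge (k-1)n-2^{k-1}\delta/\theta-\varepsilon.
\]

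\emph{Step 3: relate $V$ to $\sing F_k$ (the main obstacle).} This is Birch's diagonal argument in the finite-field setting. Intersect $V$ with the linear subspace $D=\{\hbf_1=\cdots=\hbf_{k-1}\}$, which has codimension $(k-2)n$. By the affine dimension theorem, every component of $V\cap D$ has dimension at least $\dim V-(k-2)n$, provided $V\cap D$ is nonempty, and it is, since it contains $0$. Since $\Phi_k$ is symmetric, $\Psi_m(\hbf,\dots,\hbf)$ equals a nonzero constant times $\partial F_k/\partial X_m(\hbf)$; the constant is $1/k$, and it is invertible since $p>k$. Hence $V\cap D$ is identified with $\{\nabla F_k=0\}$, which equals $\sing F_k$, because Euler's identity $kF_k=\sum_m X_m\,\partial F_k/\partial X_m$ with $p>k$ shows that $F_k$ vanishes wherever its gradient does. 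This gives
\[
\dim\sing F_k\ge \dim V-(k-2)n\ge n-2^{k-1}\delta/\theta-\varepsilon,
\]
which is (2). The points needing the most care are checking that the hypotheses of Lemma \ref{lem:schmidt} hold, including the requirement $n\ge k-1$ in the degree parameter, and the identification of $\Psi_m$ on the diagonal with the partial derivatives, including the characteristic-$p$ constants.
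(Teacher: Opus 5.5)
Your proposal is correct and follows the paper's proof step for step. You use Proposition \ref{prop:weyl3} together with $pL_1^{-k+(k-1)\theta}\lambda_2^{k-1}<1$ to force $\Psi_m\equiv 0\pmod{p}$. You then apply Lemma \ref{lem:schmidt} with $r=(k-1)n$ to get $\dim\ge (k-1)n-2^{k-1}\delta/\theta-\varepsilon$. Finally you intersect with the diagonal and use $\Psi_m(\hbf,\dots,\hbf)=k^{-1}\,\partial F_k/\partial X_m(\hbf)$. Your extra remarks on bounded multiplicity under reduction mod $p$, nonemptiness of the intersection via the origin, and Euler's identity make explicit points that the paper glosses over.

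The one hypothesis you do not verify, and which the paper checks explicitly, is the condition $\theta\le\log L_2/\log L_1$ required by Proposition \ref{prop:weyl3}. After substituting $\lambda_2=L_1/L_2$, the stated upper bound on $\theta$ is at most $\log L_2/\log L_1$ exactly because $L_1\le p$.
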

    \begin{proof}
        Suppose the first bound does not hold.
        To apply Proposition \ref{prop:weyl3}, we check that
        \begin{align*}
          0<   \log (L_1^k p^{-1}\lam_2^{1-k}) / \log (L_1^{k-1}) \leq \log L_2 / \log L_1
        \end{align*}
        as long as $p^{1/k}\lam_2^{1-1/k}<L_1\leq p$, which we have assumed. 
        Then by Proposition \ref{prop:weyl3}, for $(s_1,...,s_{k-1})$ with $s_i=1$ for $1\leq i \leq k-(I+1)$ and $s_i=2$ for $k-I\leq i \leq k-1$, \begin{align*}
            \|\alpha_{I} C(k) \Psi_m(\hbf_1,...,\hbf_{k-1})\| \leq L_1^{-k+(k-1)\theta} \lambda_{s_{1}}\cdots \lambda_{s_{k-1}}, \qquad 1\leq m \leq n
        \end{align*}
         for at least $L_1^{\theta(k-1)n - 2^{k-1}\delta} (\log L_1)^{-n}$ many choices of $(\hbf_1,...,\hbf_{k-1})\in [-4L_1^{\theta},4L_1^{\theta}]^{(k-1)n}$. Note that $\lambda_{s_{1}}\cdots \lambda_{s_{k-1}} \leq \lam_2^{k-1}$, and by our choice of $\theta$, we have $ pL_1^{-k+(k-1)\theta} \lam_2^{k-1}<1.$
         Then for all $1\leq m \leq n$, $\|\alpha_{I} C(k) \Psi_m(\hbf_1,...,\hbf_{k-1})\| < p^{-1}$. But since $\alpha_I=a_I/p$, and $C(k)$ and $\Psi_m(\hbf_1,...,\hbf_{k-1})$ are integers, we deduce that $a_I C(k) \Psi_m(\hbf_1,...,\hbf_{k-1})\equiv 0 \modd p$. Since $a_I\not\equiv 0 \modd p$ and $C(k)<p$, we further deduce that 
            \begin{align*}
                \Psi_m(\hbf_1,...,\hbf_{k-1})\equiv 0 \modd p, \qquad 1 \leq m \leq n
            \end{align*}
               for at least (asymptotically, with constant depending on $n,\varepsilon$) $L_1^{\theta(k-1)n - 2^{k-1}\delta - \varepsilon}$ many choices of $(\hbf_1,...,\hbf_{k-1})\in [-4L_1^{\theta},4L_1^{\theta}]^{(k-1)n}$.

    Let $\mathscr{S}$ denote the locus of tuples $(\hbf_1,...,\hbf_{k-1})\modd p ^{(k-1)n}$ such that $\Psi_m(\hbf_1,...,\hbf_{k-1})\equiv 0\modd p$ for all $1\leq m \leq n$. By Lemma \ref{lem:schmidt} with $r=(k-1)n, \ l=n, \ V=\mathscr{S}, \ Z=8L_1^{\theta}$, we have $\dim \mathscr{S} \geq (k-1)n - 2^{k-1}\delta/\theta - \varepsilon$. Let $\mathscr{D}$ denote the diagonal $\{(\hbf_1,...,\hbf_{k-1}) \modd p^{(k-1)n}: \hbf_1=\cdots = \hbf_{k-1}\}$, which is defined by $(k-2)n$ linear equations mod $p$. Then $\dim (\mathscr{D}\cap \mathscr{S}) \geq \dim \mathscr{S} - (k-2)n$. More precisely,
        \begin{align*}
            \mathscr{D}\cap \mathscr{S} = \{(\hbf,...,\hbf) \modd p ^{(k-1)n}: \Psi_m(\hbf,...,\hbf)\equiv 0 \modd p, \ 1 \leq m \leq n\}.
        \end{align*}
    By property \eqref{eqn:property_partial_binom} and the definition of $\Psi_m$ in \eqref{eqn:def_psi}, we have the identity
        \begin{align*}
            \Psi_m(\hbf,...,\hbf) = k^{-1} \frac{\partial F_k}{\partial X_m}(\hbf).
        \end{align*}
        Since $C(k)<p$, $\mathscr{D}\cap \mathscr{S}$ projects onto $  \{\hbf \modd p ^n: (\partial F_k / \partial X_m )(\hbf)\equiv 0\modd p, \ 1\leq m \leq n\}$,
    which is precisely the singular locus of $F_k=0$ over $\bF_p$. We conclude with the condition that $\dim \sing F_k \geq n - 2^{k-1}\delta/\theta - \varepsilon$.
\end{proof}

Finally, we apply Proposition \ref{prop:tripartite} to
    \begin{align*}
        S(\abf;F)=\sum_{(\xbf,\ybf)\in (\Nbf-\Hbf,\Nbf+\Hbf]\times (0,K]^n} e_p(\sum_{i=0}^k a_i f_i(\xbf,\ybf)),
    \end{align*}
from \eqref{eqn:S1}, with $L_1=2H$ and $L_2=K$. Let $\varepsilon>0$ and
set
    \begin{align*}
        \delta = (\frac{n-\dim \sing F_k}{2^{k-1}})\theta - \varepsilon
    \end{align*}
so that $n-2^{k-1}\delta/\theta - \varepsilon > \dim \sing F_k$. This eliminates the possibility of the second case, and so we conclude a nontrivial bound on $S(\abf;F)$ of $K^n H^{n-\delta}$. This completes the proof of Theorem \ref{thm:expsum_1}.

\section{Proof of Theorem \ref{thm:expsum_2}}
\label{sec:expsum_2}

We record the following consequence of the resolution of the main conjecture of Vinogradov's mean value theorem, as recorded in \cite[Theorem 5]{Bou17} (see also \cite[\S 4.1, Theorem 4]{Mon94}).
\begin{thm}\label{thm:weyl} Fix an integer $d\geq 3$, and let $f(x) = \alpha_d x^d+\cdots + \alpha_1 x$. Suppose for some $2\leq j \leq d$ that $|\alpha_j-b/q|<q^{-2}$ for some $(b,q)=1$. Then for all $\varepsilon>0$,
        \begin{align*}
       \sum_{1\leq x \leq H} e(f(x))  \ll_\varepsilon H^{1+\varepsilon}(q^{-1}+H^{-1}+qH^{-j})^{\sigma(d)},
        \end{align*}
    where $\sigma(d) = 1/d(d-1)$.
\end{thm}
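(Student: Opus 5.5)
Since this is a recorded consequence of the resolution of Vinogradov's main conjecture, I would prove it by the classical route that derives Weyl-sum bounds from mean values, following Vaughan's Theorem 5.2 or \cite[\S 4.1]{Mon94}. Throughout I take $s=d(d-1)/2$, so that $2s=d(d-1)$ and the target exponent is $\sigma(d)=1/2s$. The one deep input is the main conjecture
\begin{align*}
J_{s,d}(V)=\int_{[0,1]^d}\Big|\sum_{1\leq v\leq V} e(\beta_1 v+\cdots+\beta_d v^d)\Big|^{2s}d\boldsymbol\beta \ll_\varepsilon V^{s+\varepsilon},
\end{align*}
which holds because $s\leq d(d+1)/2$ \cite{Woo16,BDG16,Woo19}.

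\textbf{Step 1 (shifting).} Fix a parameter $V\leq H$. Shifting the variable gives $\sum_{x\leq H} e(f(x)) = V^{-1}\sum_{v\leq V}\sum_{x} e(f(x+v)) + O(V)$. Taylor expansion gives
\begin{align*}
f(x+v)=\sum_{i=0}^d \gamma_i(x)\,v^i,\qquad \gamma_i(x)=\sum_{l\geq i}\binom{l}{i}\alpha_l x^{l-i}.
\end{align*}
Now apply H\"older in $x$, then expand the $2s$-th power as a sum over $\mathbf v\in[1,V]^{2s}$. This bounds $|S|^{2s}$ by $H^{2s-1}V^{-2s}\sum_{\mathbf h} J(\mathbf h)\sum_{x\leq H} e(\boldsymbol\gamma(x)\cdot\mathbf h)$. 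Here $\mathbf h=(h_1,\dots,h_d)$ runs over the power-sum differences $h_i=\sum_{l\leq s}(v_l^i-v_{l+s}^i)$, so $|h_i|\leq sV^i$, and $J(\mathbf h)\leq J_{s,d}(V)$ counts the representations of $\mathbf h$. The $x$-dependence sits in the coefficients $\gamma_i(x)$. I would then isolate the coordinate $h_{j-1}$, or more cleanly $h_1$ together with the structure coming from $\alpha_j$. The cleaner variant uses the shift $x\mapsto x+uv$, with $u$ averaged as well, as in Montgomery. In that variant $\gamma_i$ becomes $\binom{\cdot}{i}\alpha_i u^i+\dots$, and the sum over $(x,u)$ reduces to counting pairs $(x,u)$ for which the vector $(\gamma_1,\dots,\gamma_d)$ lies in a small box modulo $1$.

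\textbf{Step 2 (spacing via the rational approximation).} Using the large-sieve-type estimate $\sum_{\mathbf h}|\cdot|\leq \prod_i(1+V^i\|\cdot\|)$ together with $J_{s,d}(V)$, I reduce to bounding
\begin{align*}
\#\Big\{(x,u):\ \|\gamma_j(x,u)-\gamma_j(x',u')\|\leq V^{-j}\Big\}.
\end{align*}
Since $\gamma_j$ is linear in $\alpha_j$ times $u^j$ plus terms independent of the relevant variable, this is a count of the form $\sum_{m\leq M}\min(\cdot,\|\alpha_j m\|^{-1})$. The hypothesis $|\alpha_j-b/q|<q^{-2}$ and the standard lemma (Vaughan, Lemma 2.2) bound this count by $\ll (M/q+1)(V^j+q\log q)$. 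Choosing $M\asymp H^{j}V^{-j}$-type ranges and $V=H^{1/?}$ to balance against the $O(V)$ error, everything combines to
\begin{align*}
|S|^{2s}\ll H^{2s+\varepsilon}\,(q^{-1}+H^{-1}+qH^{-j}).
\end{align*}
Taking $2s$-th roots gives the claimed bound with exponent $1/d(d-1)=\sigma(d)$.

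The main obstacle is the bookkeeping in Step 2. The difficulty is arranging that only the single coefficient $\alpha_j$ (for arbitrary $2\leq j\leq d$, not just $j=d$) controls the spacing, while the lower-order Taylor terms are absorbed. I would follow Montgomery's double shift $x+uv$ precisely for this purpose, since it makes the $v^j$-coefficient equal to $\alpha_j u^j$ plus terms that the $x$-average treats as a harmless shift. Everything else is routine given $J_{s,d}(V)\ll V^{s+\varepsilon}$.
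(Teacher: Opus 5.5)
The paper does not prove this statement. It quotes \cite[Theorem 5]{Bou17} (see also \cite[\S 4.1]{Mon94}), which is the standard deduction from the main conjecture (Vaughan, \emph{The Hardy--Littlewood Method}, Theorem 5.2). You are aiming at that deduction, but two load-bearing steps fail as written.

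\textbf{The mean value is the wrong one.} In $f(x+v)=\alpha_d v^d+\sum_{i=1}^{d-1}\gamma_i(x)v^i+\gamma_0(x)$ the top coefficient does not depend on $x$. So $e(\alpha_d v^d)$ must be absorbed into unimodular weights on $v$, and only the $(d-1)$-dimensional points $(\gamma_1(x),\dots,\gamma_{d-1}(x))$ enter the large sieve
\[
\sum_{x}\Bigl|\sum_{\mathbf g}c(\mathbf g)\,e\Bigl(\sum_{i<d}\gamma_i(x)g_i\Bigr)\Bigr|^2\ll\prod_{i<d}\bigl(H^i+\delta_i^{-1}\bigr)\cdot\max_{\xi}\#\{x:\|\gamma_i(x)-\xi_i\|\le\delta_i\ \forall i<d\}\cdot\sum_{\mathbf g}|c(\mathbf g)|^2,
\]
with $\delta_i=H^{-i}$ and $\sum_{\mathbf g}|c(\mathbf g)|^2\le J_{s,d-1}(H)$. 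The deep input is therefore $J_{s,d-1}(H)\ll H^{s+\varepsilon}$ at $s=d(d-1)/2$, which is the critical case of the main conjecture in degree $d-1$; this is where $\sigma(d)=1/d(d-1)$ comes from. If you keep the coordinate $h_d$, nothing in the $x$-sum depends on it. The large-sieve factor then becomes $H^{d(d+1)/2}=H^{s+d}$ instead of $H^s$. The bound you quote, $J_{s,d}(H)\ll H^{s+\varepsilon}$, is strictly weaker since $J_{s,d}\le J_{s,d-1}$, and it only gives $|S|^{2s}\ll H^{2s+d+\varepsilon}(q^{-1}+H^{-1}+qH^{-j})$, which is worse than trivial.

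\textbf{The spacing step uses the wrong coordinate and discards the ones you need.} With the single shift, $\alpha_j$ is the constant term of $\gamma_j(x)=\alpha_j+(j+1)\alpha_{j+1}x+\cdots$, so it cancels from $\gamma_j(x)-\gamma_j(x')$. It enters instead through
$\gamma_{j-1}(x)-\gamma_{j-1}(x')=j\alpha_j u+\sum_{l>j}\binom{l}{j-1}\alpha_l(x^{l-j+1}-x'^{l-j+1})$, where $u=x-x'$. For $j<d$ the terms with $l>j$ are neither small nor independent of $x$, since nothing is assumed about $\alpha_{j+1},\dots,\alpha_d$. Your double shift has the same defect: $u^j\gamma_j(x)$ contains $\alpha_l x^{l-j}u^j$, and for fixed $u$ the factor $e(\alpha_d u^d v^d)$ again has to go into the weights.

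The missing idea is to use all coordinates $i=j-1,\dots,d-1$ and eliminate the higher coefficients by downward induction. First, $\gamma_{d-1}(x)-\gamma_{d-1}(x')=d\alpha_d u$ gives $\|d\alpha_d u\|\ll H^{1-d}$. Then write $x^m-x'^m=uQ_m(x,x')$ with $|Q_m|\ll H^{m-1}$, and use $\|n\theta\|\le|n|\,\|\theta\|$ for integers $n$. This gives $\|c_l\alpha_l u\|\ll H^{1-l}$ for bounded integers $c_l=c_l(d)$, all the way down to $l=j$. The counting lemma for $\|\alpha_j w\|$ with $|w|\ll H$ then bounds the box count by $(H/q+1)(qH^{1-j}+1)\ll H(q^{-1}+H^{-1}+qH^{-j})$. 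Combined with the large sieve above, this gives $|S|^{2s}\ll H^{2s+\varepsilon}(q^{-1}+H^{-1}+qH^{-j})$.

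\textbf{The shift length is also wrong.} The argument above needs shifts of length $\asymp H$. With your shift of length $V<H$ and error $O(V)$, the spacing scale is $V^{-i}$ and the count degrades; already for $j=d$ you get $qV^{1-d}$ in place of $qH^{1-d}$. Meanwhile the $O(V)$ term is admissible only if $V\ll H^{1-\sigma(d)+\varepsilon}$. So replace the crude shift by a full-length shift with a completion device, at the cost of a factor $\log H$. Either detect the range condition by an integral of additive characters, as in Vaughan's proof, or use the weight-and-Fourier-inversion trick from the paper's amplification step.
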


Recall from \eqref{eqn:S1} that we want to bound
    \begin{align}\label{eqn:bourgain_bound}
\sum_{\substack{(a_0,...,a_{k})\not\equiv \mathbf{0}\\ \modd p^{k+1}}}|\sum_{(\xbf,\ybf)\in (\Nbf-\Hbf,\Nbf+\Hbf]\times (0,K]^{n}} e_p(\sum_{i=0}^{k} a_i f_i(\xbf,\ybf))|^2,
    \end{align}
   where $ f_i(\xbf,\ybf) = \sum_{ |\beta|=i}(\partial^\beta F)(\xbf) \ybf^\beta / \beta !$ and $F = G_1(X_1)+\cdots + G_n(X_n)$. For each $1\leq j\leq n$, write $G_j(X_j) = c_{j,k}X_j^{k}+\cdots +c_{j,1}X_j + c_{j,0}$, where $c_{j,k}\not\equiv 0 \modd p$, so that the $i$-th derivative is $G_j^{(i)}(X_j) = \sum_{r=i}^{k} i!\binom{r}{i} c_{j,r}X_j^{r-i}$.
Since $(\partial^2 F/\partial X_i \partial X_j)$ is identically zero for all $i\neq j$, we have
 \begin{align*}
   f_i(\xbf,\ybf) = \sum_{ j=1}^{n}  \frac{ G_j^{(i)}(x_j)}{i!} y_j^i
   =\sum_{ j=1}^{n}  (\sum_{r=i}^{k} \binom{r}{i} c_{j,r}x_j^{r-i}) y_j^i,   
    \end{align*}
and so 
\eqref{eqn:bourgain_bound} is
    \begin{align}\label{eqn:bourgain_bound2}
     \sum_{\substack{(a_0,...,a_{k})\not\equiv \mathbf{0}\\ \modd p^{k+1}}}
     \prod_{j=1}^{n} | \sum_{y_j\in  (0,K]} \sum_{x_j\in (N_j-H_j,N_j+H_j]}  e_p(\sum_{i=0}^{k} a_i  \sum_{r=i}^{k} \binom{r}{i} c_{j,r}x_j^{r-i} y_j^i)|^2.
    \end{align}
For each nonzero $\abf=(a_0,...,a_{k})$ and $1\leq j\leq n$, let $S_j(\abf;F)$ denote the inner double sum.
We shall estimate $S_j(\abf;F)$ by fixing one of the variables $x_j,y_j$ and applying Theorem \ref{thm:weyl} to the remaining variable. Before proceeding, we note two subtleties. First, as $(a_0,...,a_k)$ varies, in particular, when certain coordinates are zero mod $p$, the degree and leading coefficient of the associated polynomial change, and so we must be careful to ensure that we apply Theorem \ref{thm:weyl} correctly. Second, the upper bound in Theorem \ref{thm:weyl} is dominated by $H^{1-\sigma(d)+\varepsilon}$ when $H\gg q^{1/(j-1)}$. This is a reasonable range when $j$ is large. With these considerations, we decompose the set $(\bZ/p\bZ)^{k+1}\setminus \{\mathbf{0}\}$ as follows.
    For $0\leq \kappa \leq k$, define
    \begin{align*}
        A_\kappa = \{(a_0,...,a_{k})\modd p ^{k+1}: a_0,a_1,...,a_{\kappa-1}\equiv 0, a_\kappa\not\equiv 0 \modd p\},
    \end{align*}
    so that $ (\bZ/p\bZ)^{k+1}\setminus \{\mathbf{0}\} = \bigcup_{0\leq \kappa \leq k} A_\kappa$.
For $\lfloor k/2 \rfloor  < \kappa \leq k$, define
    \begin{align*}
        A_\kappa' = \{(a_0,...,a_{k})\modd p ^{k+1}: a_{k},a_{{k}-1},...,a_{\kappa+1}\equiv 0, a_\kappa\not\equiv 0 \modd p\},
    \end{align*}
    and notice that
    \begin{align*}
        \bigcup_{\lfloor k/2 \rfloor <\kappa\leq k}A_\kappa \subseteq  \bigcup_{\lfloor k/2 \rfloor <\kappa\leq k}A_\kappa'.
    \end{align*}
Indeed, let $\abf$ be an element of the left-hand side. Then $a_\kappa\not\equiv 0 \modd p$ for some $\lfloor k/2 \rfloor <\kappa\leq k$.
The claim follows by noting that the complement of the right-hand side is the set of $\abf$ such that $a_{k},a_{k-1},...,a_{\lfloor k/2\rfloor +1}\equiv 0 \modd p$.
Then by this decomposition, \eqref{eqn:bourgain_bound2}, and positivity, \eqref{eqn:bourgain_bound} is at most
    \begin{align*}
        \sum_{0\leq \kappa \leq \lfloor k/2 \rfloor } \sum_{\abf\in A_\kappa}\prod_{j=1}^{n}|S_j(\abf;F)|^2 
        + \sum_{\lfloor k/2 \rfloor < \kappa \leq k} \sum_{\abf\in A_\kappa'}\prod_{j=1}^{n}|S_j(\abf;F)|^2.
    \end{align*}
Now we bound the two terms separately.

   Fix $0\leq \kappa \leq \lfloor k/2 \rfloor$, and let $\abf\in A_\kappa$, so that $a_0,a_1,...,a_{\kappa-1}\equiv 0$ and $a_\kappa\not\equiv 0 \modd p$. Then
    \begin{align*}
    S_j(\abf;F)= \sum_{y_j\in (0,K]}   \sum_{x_j\in (0,2H_j]} e_p(\sum_{i=\kappa}^{k} a_i  \sum_{r=i}^{k} \binom{r}{i} c_{j,r}(x_j+N_j-H_j)^{r-i} y_j^i).
    \end{align*}
    Fix $y_j\in (0,K]$. Apply Theorem \ref{thm:weyl} to the sum over $x_j$, where the argument is a single-variable polynomial in $x_j$ of degree $k-\kappa$. Note that $k-\kappa\geq k-\lfloor k/2 \rfloor \geq 3$ as long as $k\geq 5$. We check that the leading coefficient $a_\kappa \binom{k}{\kappa}c_{j,k}y_j^\kappa$ is nonzero mod $p$, since $a_\kappa, \binom{k}{\kappa}, c_{j,k}, y_j$ are all nonzero mod $p$, and hence
 \begin{align*}
            S_j(\abf;F) \ll_\varepsilon  KH_j^{1+\varepsilon}(p^{-1}+H_j^{-1}+pH_j^{-(k-\kappa)})^{\sigma(k-\kappa)}.
        \end{align*}
   Note that $H_j^{-1}\gg p^{-1}$, since we assume $H_j\ll p$. On the other hand, $H_j^{-1}\geq pH_j^{-(k-\kappa)}$ if and only if
        \begin{align}\label{eqn:A_regime}
            H_j\geq p^{1/(k-\kappa-1)}.
        \end{align}
   In this regime, we have the bound
        \begin{align}\label{eqn:A_bound}
            S_j(\abf;F) \ll_{\varepsilon} KH_j^{1-\sigma(k-\kappa)+\varepsilon}.
        \end{align}
    To establish a bound that is uniform in $\kappa$, we check when \eqref{eqn:A_regime} and \eqref{eqn:A_bound} yield the worst bounds.  For $0\leq \kappa \leq \lfloor k/2 \rfloor $, the condition \eqref{eqn:A_regime} is the most restrictive when $\kappa=\lfloor k/2 \rfloor $, in which case $k-\kappa-1 = \lceil k/2 \rceil -1$. We verify that $\lceil k/2 \rceil -1 \geq 1$ as long as $k\geq 3$. For \eqref{eqn:A_bound}, note that $\sigma$ is a decreasing function,  so the right-hand side is the largest when $\kappa = 0$.
    Hence, uniformly for $0\leq \kappa\leq \lfloor k/2\rfloor$, in the regime $H_j\geq p^{1/(\lceil k/2\rceil  -1)}$, we have, for $\abf \in A_\kappa$,
    \begin{align}\label{eqn:expsum_1}
        \sum_{0\leq \kappa \leq \lfloor k/2 \rfloor } \sum_{\abf\in A_\kappa}\prod_{j=1}^{n}|S_j(\abf;F)|^2 
        \ll_{n,\varepsilon} p^{k+1}K^{2n}H^{2n(1-\sigma(k))+\varepsilon}.
    \end{align}
(We briefly remark that if $k\leq 4$, then we must treat quadratic polynomials in $x_j$, in which case the Weyl bound gives $p^{-1/2}H_j  + p^{1/2}\log p$ (see e.g. \cite[Theorem 8.1]{IK04}). This is only nontrivial for $H_j\gg p^{1/2}\log  p$, and so we do not obtain nontrivial bounds on $S_j(\abf;F)$ in the desired range of $H_j$.)

Next fix $\lfloor k/2 \rfloor < \kappa \leq k$, and let $\abf \in A'_\kappa$, so that $a_{k},a_{k-1},...,a_{\kappa+1}\equiv 0$ and $a_\kappa \not\equiv 0 \modd p$. 
Then
    \begin{align*}
         S_j(\abf;F)= \sum_{x_j\in (N_j-H_j,N_j+H_j]} 
         \sum_{y_j\in (0,K]}   e_p(\sum_{i=0}^{\kappa} a_i  \sum_{r=i}^{k} \binom{r}{i} c_{j,r}x_j^{r-i} y_j^i).
    \end{align*}
   For fixed $x_j$, we apply Theorem \ref{thm:weyl} to the sum over $y_j$. Here, the argument is a single-variable polynomial in $y_j$ of degree $\kappa\geq  \lfloor k/2 \rfloor +1 \geq 3$ as long as $k\geq 4$. The leading coefficient is $\phi_{j,\kappa}(x_j)= a_\kappa  \sum_{r=\kappa}^{k} \binom{r}{\kappa} c_{j,r}x_j^{r-\kappa} $. This is a degree $k-\kappa$ polynomial in $x_j$, with nonzero leading coefficient $a_\kappa \binom{k}{\kappa}c_{j,k}$, and so $\phi_{j,\kappa}(x_j)=0$ in $\bF_p$ for at most $k-\kappa \leq k$ many $x_j$'s. Let $Z_{j,\kappa}\subseteq \bF_p$ denote the set of zeros of $\phi_{j,\kappa}$. Then 
        \begin{align*}
          |  S_j(\abf;F) |\leq 
          kK+
          \sum_{x_j\in (N_j-H_j,N_j+H_j]\cap (\bF_p\setminus Z_{j,\kappa})} 
        | \sum_{y_j\in (0,K]}   e_p(\sum_{i=0}^{\kappa} a_i  \sum_{r=i}^{k} \binom{r}{i} c_{j,r}x_j^{r-i} y_j^i)|.
        \end{align*}
Applying Theorem \ref{thm:weyl}, we get
    \begin{align*}
         S_j(\abf;F) \ll_{\varepsilon} kK+  H_jK^{1+\varepsilon}(p^{-1}+K^{-1}+pK^{-\kappa})^{\sigma(\kappa)}.
    \end{align*}
    In the second term, $K^{-1}\gg p^{-1}$ since $K\ll p$, while $K^{-1}\gg pK^{-\kappa}$ as long as
        \begin{align}\label{eqn:A'_regime}
            K\geq p^{1/(\kappa-1)}.
        \end{align}
  In this regime, we have the bound
    \begin{align}\label{eqn:A'_bound}
        S_j(\abf;F) \ll_{\varepsilon} k K+  H_jK^{1-\sigma(\kappa)+\varepsilon}.
    \end{align}
    Again, \eqref{eqn:A'_regime} is the most restrictive when $\kappa = \lfloor k/2 \rfloor +1$, while the bound in \eqref{eqn:A'_bound} is the worst when $\kappa = k$. Hence uniformly for $\lfloor k/2 \rfloor\leq \kappa \leq k-1$, in the regime $K\geq p^{1/\lfloor k/2 \rfloor}$, we have, for $\abf \in A'_{\kappa}$, the bound $S_j(\abf;F) \ll_{\varepsilon} k K+  H_jK^{1-\sigma(k)+\varepsilon}$.
    The second term here dominates since we assume $K\ll H_j$. So
    \begin{align} \label{eqn:expsum_2}
        \sum_{\lfloor k/2 \rfloor < \kappa \leq k} \sum_{\abf\in A_\kappa'}\prod_{j=1}^{n}|S_j(\abf;F)|^2 
      \ll_{n,k,\varepsilon} p^{k+1}H^{2n}K^{2n(1-\sigma(k))+\varepsilon}.
    \end{align}
    Combining \eqref{eqn:expsum_1}, \eqref{eqn:expsum_2} and the conditions $H_j\geq p^{1/(\lceil k/2 \rceil -1)}$ and $K\geq p^{1/\lfloor k/2 \rfloor}$ and noting that $k/2 -1 \leq \lceil k/2 \rceil -1 \leq \lfloor k/2 \rfloor$ proves the theorem.

\section*{Acknowledgements}
 The author thanks Lillian B. Pierce for her continued encouragement and support and for many helpful discussions. 
The author also thanks Damaris Schindler for her encouragement, her insights relating to the work of Birch and Schmidt, and for drawing our attention to the work of Brandes on linear spaces. We are further grateful to Rainer Dietmann for suggesting the paper of Schmidt on exponential sums. Finally, we thank Dante Bonolis, Julia Brandes, Tim Browning, Kevin Hughes, Emmanuel Kowalski, Akshat Mudgal, and Katherine Woo for helpful conversations. The author was partially supported by NSF DMS-2200470 and the Katherine Goodman Stern Fellowship from The Graduate School at Duke University for portions of this project.

\bibliographystyle{alpha}
\bibliography{_bibliography}
\end{document}